\newtheorem{thm}{Theorem}[subsection]
\newtheorem{prop}[thm]{Proposition}
\newtheorem{cor}[thm]{Corollary}
\newtheorem{lem}[thm]{Lemma}
\newtheorem{assump}[thm]{Assumption} 
\theoremstyle{definition}
\newtheorem{rem}[thm]{Remark}
\numberwithin{equation}{subsection}
\newcommand{\ul}{\mathfrak{u}}
\newcommand{\Lie}{\operatorname{Lie}}
\newcommand{\ind}{\operatorname{ind}}
\newcommand{\opH}{\operatorname{H}}
\newcommand{\Hom}{\operatorname{Hom}}
\newcommand{\bw}{\bigwedge}
\newcommand{\lth}{\prec_{\operatorname{ht}}}
\newcommand{\ga}{\gamma}
\newcommand{\la}{\lambda}
\newcommand{\al}{\alpha}
\newcommand{\be}{\beta}
\newcommand{\si}{\sigma}
\newcommand{\de}{\delta}
\begin{document}

\title[Third Cohomology for Frobenius kernels and related structures]
{Third Cohomology for Frobenius kernels and related structures}
\author{\sc Christopher P. Bendel}
\address
{Department of Mathematics, Statistics and Computer Science\\
University of
Wisconsin-Stout \\
Menomonie\\ WI~54751, USA}
\thanks{Research of the first author was supported in part by a Simons Foundation Collaboration Grant {\#}317062}
\email{bendelc@uwstout.edu}
\author{\sc Daniel K. Nakano}
\address
{Department of Mathematics\\ University of Georgia \\
Athens\\ GA~30602, USA}
\thanks{Research of the second author was supported in part by NSF
grant DMS-1402271}
\email{nakano@math.uga.edu}
\author{\sc Cornelius Pillen}
\address{Department of Mathematics and Statistics \\ University of South
Alabama\\
Mobile\\ AL~36688, USA}
\thanks{Research of the third author was supported in part by a Simons Foundation Collaboration Grant {\#}245236}
\email{pillen@southalabama.edu}

\date\today
\thanks{2000 {\em Mathematics Subject Classification.} Primary
17B50, 17B56, 20G05, 20G10}

\begin{abstract} Let $G$ be a simple simply connected group scheme defined over ${\mathbb F}_{p}$ and $k$ be an algebraically closed field of characteristic $p>0$. Moreover, 
let $B$ be a Borel subgroup of $G$ and $U$ be the unipotent radical of $B$. In this paper the authors compute the third cohomology group for $B$ 
and its Frobenius kernels, $B_{r}$, with coefficients in a one-dimensional representation. These computations hold with relatively mild restrictions on the characteristic of the 
field.  As a consequence of our calculations,  the third ordinary Lie algebra cohomology group for ${\mathfrak u}=\text{Lie }U$ with coefficients in $k$ is determined, as well as 
the third $G_{r}$-cohomology with coefficients in the induced modules $H^{0}(\lambda)$. 
\end{abstract}

\maketitle


\section{Introduction}

\subsection{} Over the last 40 years, one of the central questions in the representation 
theory of algebraic groups that remains open is to understand the structure 
and vanishing behavior of the line bundle cohomology $H^{n}(\lambda):={\mathcal H}^{n}(G/B,{\mathcal L}(\lambda))$ for $n\geq 0$ where 
$G/B$ is the flag variety and ${\mathcal L}(\lambda)$ is a line bundle over $G/B$. 
For $n=0$, the characters are given by Weyl's character formula, but the composition factors are 
not known when $k={\overline {\mathbb F}}_{p}$. Andersen computed the socle of 
$H^{1}(\lambda)$, but more general results other than calculations for low rank examples remain elusive. 
A fundamental computation which is related to understanding line bundle cohomology is the calculation of 
rational $B$-cohomology, $\text{H}^{\bullet}(B,\lambda)$, where $B$ is a Borel subgroup for the reductive group $G$ and
$\lambda\in X(T)$ is a weight regarded as a one-dimensional representation of $B$. 

In \cite{BNP2} the authors calculated $\text{H}^{2}(B,\lambda)$ for $p\geq 3$. Our methods involved weaving other relevant 
$\text{H}^{2}$-calculations for Frobenius kernels and Lie algebras into the picture. These fundamental 
cohomology groups and the status of their computation are presented in the following table.\footnote{See the following section for the notation.} The references are 
\cite{Kos}, \cite{FP2}, \cite{PT}, \cite{AJ},  \cite{KLT}, \cite{Jan2}, \cite{BNP1, BNP2}, \cite{AR}, \cite{W}, \cite{UGA}. 

\begin{center}
\renewcommand{\arraystretch}{1.3}
\begin{tabular}{ccc}
& Cohomology Group    & Known Results  \\ \hline  
(1) &  $\text{H}^{n}({\mathfrak u},k)$ & [$p\geq h-1$, $n\geq 0$]; [$p\geq 2$, $n=0,1$]; [$p\geq 3$. $n=2$]       \\
(2) &  $\text{H}^{n}(U_{1},k)$  & [$p\geq h$, $n\geq 0$]; [$p\geq 2$, $n=0,1,2$]   \\
(3) &  $\text{H}^{n}(B_{1},\lambda)$ & [$p\geq h$, $n\geq 0$]; [$p\geq 2$, $n=0,1,2$]   \\
(4) &  $\text{H}^{n}(B_{r},\lambda)$ & [$p\geq 2$, $n=0,1,2$]   \\                          
(5) &  $\text{H}^{n}(B,\lambda)$ & [$p\geq 2$, $n=0,1,2$]; [$p>h$, $n=3$]   \\    
(6) &  $\text{H}^{n}(G_{r},H^{0}(\lambda))$ & [$p>h$, $r=1$, $n\geq 0$]; 
[$p\geq 2$, $r\geq 1$, $n=0,1,2$]   \\ \hline
\end{tabular}
\end{center} 
Recently, for $SL_2$, Ngo \cite{Ngo} has computed (4) and (6) for all primes and all $n$.  

The calculation of the aforementioned cohomology groups is 
formidable and of general interest. 
For example, the complete calculation of (1) and (2) would yield  
a general version (for {\em all} characteristics) of the celebrated theorem of Kostant \cite{Kos}. 
For (6) (when $\lambda=0$), the computation of $\text{H}^{\bullet}(G_r,k)$ for  
$r\geq 2$ presents a major challenge and is geometrically related to the variety 
of commuting $r$-tuples of $p$-nilpotent matrices \cite{SFB1,SFB2}.  
The goal in this paper is to expand the computations of (1)-(6) to the case when $n=3$. 

The paper is organized as follows. In Section 2 we provide a preliminary analysis on possible weights that can 
occur in the ordinary Lie algebra cohomology group $\text{H}^{3}({\mathfrak u},k)$. 
At the beginning of Section 3,  
further results are provided on constraints involving root sums. These results are crucial throughout the paper in 
considering differentials in various spectral sequences. Later in this section, we provide a realization of $\text{H}^{3}(U_{1},k)$ via 
the ordinary cohomology groups $\text{H}^{j}({\mathfrak u},k)$ for $j=1,3$. 

At the beginning of Section 4, we introduce assumptions on the characteristic of the field that will be used during the remainder of the paper. Several key ideas 
from Andersen \cite{And} involving the $B$-cohomology are then employed to give an explicit description of $\text{H}^{3}({\mathfrak u},k)$. The section concludes by establishing 
another crucial calculation of ordinary Lie algebra cohomology, namely $\text{H}^{1}({\mathfrak u},{\mathfrak u}^{*})$. 

In Section 5, with these prior computations and some arguments using spectral sequences, we determine $\text{H}^{3}(B_{r},\lambda)$ for $\lambda\in X(T)$. As 
a consequence of these results, the $B$-cohomology groups, $\text{H}^{n}(B,\lambda)$, for $\lambda\in X(T)$ are determined which extends the prior work of 
Andersen and Rian \cite{AR}. Finally, in Section 6, we give a description of $\text{H}^{3}(G_{r},H^{0}(\lambda))^{(-r)}$ and demonstrate that these cohomology 
groups (as $G$-modules) admit a good filtration. An application is provided at the end of the section, which uses the $B$-cohomology calculations to give linear 
bounds on the third cohomology of finite Chevalley groups. An Appendix at the end of hte paper contains the computations of certain weights that appear in the calculations of $B_1$-cohomology groups.

\subsection{\bf Notation:} Throughout this paper, we will
follow the basic conventions provided in \cite{Jan1}.
\vskip .5cm 
\begin{enumerate}
\item[(1)] $k$: an algebraically closed field of characteristic $p>0$.

\item[(2)] $G$: a simple, simply connected algebraic group which is defined 
and split over the finite prime field ${\mathbb F}_p$ of characteristic $p$. The assumption
that $G$ is simple (equivalently, its root system $\Phi$ is irreducible) is largely one of convenience.
All the results of this paper extend easily to the semisimple, simply connected case.

\item[(3)] $F:G\rightarrow G$: the Frobenius morphism. 

\item[(4)] $G_r=\text{ker }F^{r}$: the $r$th Frobenius kernel of $G$. 

\item[(5)] $G^{(r)}$: the $r$th Frobenius twist of $G$; $G^{(r)} \cong G/G_r$.

\item[(6)] $G({\mathbb F}_{q})$: the associated finite Chevalley group where ${\mathbb F}_q$ is the field with $q = p^r$ elements. 

\item[(7)] $T$: a maximal split torus in $G$. 

\item[(8)] $\Phi$: the corresponding (irreducible) root system associated to $(G,T)$. When referring to short and long roots, when a root system has roots of only one length, all roots shall be considered as both short and long.

\item[(9)] $\Pi=\{\alpha_1,\dots,\alpha_n\}$: the set of simple roots. We will adhere to the ordering of the simple roots as given in
\cite{Jan2} (following Bourbaki). In particular, for type $B_n$, $\al_n$ denotes the unique short simple root and for type $C_n$,
$\al_n$ denotes the unique long simple root.   

\item[(10)] $\Phi^{\pm}$: the positive (respectively, negative) roots.  

\item[(11)] $\alpha_0$: the maximal short root. 

\item[(12)]  $B$: a Borel subgroup containing $T$ corresponding to the negative roots. 

\item[(13)] $U$: the unipotent radical of $B$.

\item[(14)] $\mathbb E$: the Euclidean space spanned by $\Phi$ with inner product $\langle\,,\,\rangle$ normalized so that $\langle\alpha,\alpha\rangle=2$ for $\alpha \in \Phi$ any short root.

\item[(15)] $\alpha^\vee=2\alpha/\langle\alpha,\alpha\rangle$: the coroot of $\alpha\in \Phi$.

\item[(16)] $\rho$: the Weyl weight defined by $\rho=\frac{1}{2}\sum_{\alpha\in\Phi^+}\alpha$.

\item[(17)] $h$: the Coxeter number of $\Phi$, given by $h=\langle\rho,\alpha_0^{\vee} \rangle+1$.

\item[(18)] $W=\langle s_{\alpha_1},\dots,s_{\alpha_n}\rangle\subset{\mathbb O}({\mathbb E})$: the Weyl group of $\Phi$, generated by the orthogonal reflections $s_{\alpha_i}$, $1\leq i\leq n$. For $\alpha\in \Phi$, $s_\alpha:{\mathbb E} \to{\mathbb E}$ is the orthogonal reflection in the hyperplane $H_\alpha\subset \mathbb E$ of vectors orthogonal to $\alpha$.

\item[(19)] $\ell : W \to {\mathbb Z}$: the usual length function on $W$; for $w \in w$, $\ell(w)$ is the minimum number of simple reflections required to express $w$ as a product of simple reflections.

\item[(20)] $X(T)=\mathbb Z \omega_1\oplus\cdots\oplus{\mathbb Z}\omega_n$: the weight lattice, where the fundamental dominant weights $\omega_i\in{\mathbb E}$ are defined by $\langle\omega_i,\alpha_j^\vee\rangle=\delta_{ij}$, $1\leq i,j\leq n$.

\item[(21)] $X(T)_{+}={\mathbb N}\omega_1+\cdots+{\mathbb N}\omega_n$: the cone of dominant weights.

\item[(22)] $X_{r}(T)=\{\lambda\in X(T)_+: 0\leq \langle\lambda,\alpha^\vee\rangle<p^{r},\,\,\forall \alpha\in\Pi\}$: the set of $p^{r}$-restricted dominant weights. 


\item[(23)] $M^{(s)}$:  the module obtained by composing the underlying representation for 
a rational $G$-module $M$ with $F^{s}$.

\item[(24)] $H^0(\lambda) := \operatorname{ind}_B^G\lambda$, $\lambda\in X(T)_{+}$: the induced module whose character is provided by Weyl's character formula.  


\end{enumerate}


\section{Observations on $\ul$-cohomology}


\subsection{}  We begin by recalling the definition of the ordinary Lie algebra
cohomology of a Lie algebra $L$ over $k$. The ordinary
Lie algebra cohomology $\opH^i(L,k)$ may be computed as the
cohomology of the complex
$$
k \overset{d_0}\to L^* \overset{d_1}\to \Lambda^2(L)^*
\overset{d_2}\to \Lambda^3(L)^* \to \cdots .
$$
The differentials are given as follows:
$d_0 = 0$ and $d_1: L^* \to \Lambda^2(L)^*$ with
\begin{eqnarray*}
(d_1\phi)(x\wedge y) = -\phi([x,y])
\end{eqnarray*}
where $\phi \in L^*$ and $x, y \in L$. For the
higher differentials, we identify $\Lambda^n(L)^* \cong \Lambda^n(L^*)$.
Then the differentials are determined by the following
product rule (see \cite[I.9.17]{Jan1}):
\begin{eqnarray*}
d_{i+j}(\phi\wedge\psi) = d_i(\phi)\wedge\psi + (-1)^i\phi\wedge
d_j(\psi).
\end{eqnarray*}
Using this formula, one can use induction to obtain the following formula.

\begin{lem}\label{L:differential} Let $n \geq 1$ be an integer. Consider $d_n : \Lambda^n(L^*) \to \Lambda^{n+1}(L^*)$.
Let 
$$
x = \bw_{i = 1}^{n} \phi_i \in \Lambda^n(L^*).
$$
Then
$$
d_n(x) = \sum_{j = 1}^{n}(-1)^{j + 1}\bw_{i=1}^n\phi_{i}^j
$$
where $\phi_i^j = 
\begin{cases}
\phi_i &\text{ if } i \neq j\\
d_1(\phi_i) & \text{ if } i = j.
\end{cases}$

In other words,
\begin{align*}
d_n(x) &= d_1(\phi_1)\wedge\phi_2\wedge\phi_3\wedge\cdots\wedge\phi_n -
	\phi_1\wedge d_1(\phi_2) \wedge \phi_3 \wedge \cdots\wedge\phi_n\\
	&\qquad + \cdots + 
	(-1)^{n+1}\phi_1\wedge\phi_2\wedge\cdots\wedge\phi_{n-1}\wedge d_1(\phi_n).
\end{align*}
\end{lem}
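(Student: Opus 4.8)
The plan is to prove the formula by induction on $n$, which is exactly the inductive structure suggested by the stated product rule. The base case $n=1$ is immediate: there the sum on the right collapses to the single term $(-1)^{1+1}d_1(\phi_1) = d_1(\phi_1)$, which equals $d_1(x)$ by definition, so there is nothing to check.

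For the inductive step I would split off the first factor, writing $x = \phi_1 \wedge \psi$ where $\psi = \bw_{i=2}^n \phi_i \in \Lambda^{n-1}(L^*)$, and apply the product rule with the degree decomposition $n = 1 + (n-1)$, namely with $i=1$ and $j=n-1$:
$$
d_n(x) = d_1(\phi_1)\wedge\psi - \phi_1 \wedge d_{n-1}(\psi).
$$
The first summand is $d_1(\phi_1)\wedge \phi_2 \wedge \cdots \wedge \phi_n$, which is precisely the $j=1$ term of the desired formula and carries the correct sign $(-1)^{1+1}=1$. To handle the second summand I would invoke the inductive hypothesis on $\psi$, whose $n-1$ factors are $\phi_2, \ldots, \phi_n$. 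The key bookkeeping point is the reindexing: the $m$-th factor of $\psi$ is $\phi_{m+1}$, so substituting $j = m+1$ converts the inductive sign $(-1)^{m+1}$ into $(-1)^{j}$ and lets $j$ range over $2, \ldots, n$. Wedging $\phi_1$ on the left and absorbing the leading sign $(-1)^1$ coming from the product rule turns $(-1)^j$ into $(-1)^{j+1}$, producing exactly the terms $j = 2, \ldots, n$ of the claimed expression.

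Combining the two contributions then yields $d_n(x) = \sum_{j=1}^n (-1)^{j+1}\bw_{i=1}^n \phi_i^j$, which closes the induction.

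I expect no genuine obstacle here; the entire content is the graded Leibniz rule, and the only thing requiring care is the sign and index bookkeeping when transporting the inductive hypothesis on $\psi$ back to the indexing on $x$. I would double-check the two sign manipulations—the factor $(-1)^1$ emitted by the product rule and the index shift $m \mapsto m+1$ in the summation—since these are the two places where an error could silently flip a sign. As a sanity check before running the general induction, I would verify the case $n=2$ directly, where the product rule gives $d_2(\phi_1\wedge\phi_2) = d_1(\phi_1)\wedge\phi_2 - \phi_1\wedge d_1(\phi_2)$, confirming that the sign conventions agree with the claimed formula.
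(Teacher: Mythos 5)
Your proof is correct and follows exactly the route the paper intends: the paper simply asserts that the formula follows by induction from the stated product rule $d_{i+j}(\phi\wedge\psi) = d_i(\phi)\wedge\psi + (-1)^i\phi\wedge d_j(\psi)$, and your argument carries out that induction with the correct sign and index bookkeeping.
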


For the remainder of the paper we will be primarily interested in the ordinary Lie algebra cohomology
of the Lie algebra $\ul = \Lie(U)$ where $U$ is the unipotent radical of a Borel subgroup of $G$.


\subsection{\bf First cohomology of $\ul$:}
Now let us consider the case when $L={\mathfrak u}=\text{Lie }U$.
A basis for ${\mathfrak u}$ is given by a basis of negative root vectors
$\{x_{\alpha}: \alpha\in \Phi^{-}\}$. Let $\{\phi_{\alpha}:\ \alpha\in
\Phi^{+}\}$ be the dual basis in ${\mathfrak u}^{*}$ with
$\phi_{\alpha}(x_{\beta})=\delta_{-\alpha,\beta}$ for all
$\alpha\in \Phi^{+}$ and $\beta\in \Phi^{-}$.
It is well known that the first cohomology (for any Lie algebra)
is
$$
\opH^1(\ul,k) = \ker d_1 = (\ul/[\ul,\ul])^*.
$$
For large primes, the simple roots give a basis for
$\opH^1(\ul,k)$. Specifically, we recall the following result of
Jantzen \cite{Jan2}.

\begin{prop}\label{P:h1} Assume $p \geq 3$.
\begin{itemize}
\item[(a)] If $p = 3$, assume further that $\Phi$ is not of type $G_2$.
Then a $T$-basis for $\opH^1(\ul,k)$ is $\{\phi_{\al}~:~ \al \in \Pi\}$.
\item[(b)] If $p = 3$ and $\Phi$ is of type $G_2$, then
a $T$-basis for $\opH^1(\ul,k)$ is $\{\phi_{\al_1},\phi_{\al_2},
\phi_{3\al_1 + \al_2}\}$.
\end{itemize}
\end{prop}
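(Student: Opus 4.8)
The plan is to reduce the entire computation to an understanding of the derived subalgebra $[\ul,\ul]$. By the identification recalled just before the statement, $\opH^1(\ul,k) = \ker d_1 = (\ul/[\ul,\ul])^*$. Since $T$ acts on $\ul$ with one-dimensional weight spaces (the root spaces $k\,x_\beta$, $\beta\in\Phi^-$) and $d_1$ is $T$-equivariant, $\ker d_1$ is a sum of $T$-weight spaces; as $\phi_\gamma$ has weight $\gamma$ and these weights are pairwise distinct, it suffices to decide for each $\gamma\in\Phi^+$ whether $\phi_\gamma\in\ker d_1$. From $(d_1\phi_\gamma)(x\wedge y)=-\phi_\gamma([x,y])$ one sees that $\phi_\gamma\in\ker d_1$ precisely when $\phi_\gamma$ vanishes on $[\ul,\ul]$, equivalently when $x_{-\gamma}\notin[\ul,\ul]$. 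Using a Chevalley basis, $[x_\alpha,x_\beta]=N_{\alpha,\beta}\,x_{\alpha+\beta}$, so $x_{-\gamma}$ lies in $[\ul,\ul]$ over $k$ exactly when $\gamma$ admits a decomposition $\gamma=\alpha+\beta$ with $\alpha,\beta\in\Phi^+$ and the associated structure constant nonzero modulo $p$.

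First I would dispose of the simple roots: if $\alpha\in\Pi$ then $\alpha$ is not a sum of two positive roots, so $x_{-\alpha}\notin[\ul,\ul]$ and $\phi_\alpha\in\opH^1(\ul,k)$ for every prime. Thus $\{\phi_\alpha:\alpha\in\Pi\}$ always sits inside the cohomology, and the only question is whether a non-simple root can contribute. Here I would invoke the standard bound on Chevalley structure constants: if $\alpha+\beta\in\Phi$ then $N_{\alpha,\beta}=\pm(r+1)$, where $r$ is the largest integer with $\beta-r\alpha\in\Phi$, and $|N_{\alpha,\beta}|\le 3$, with the value $3$ occurring only in type $G_2$ and the value $2$ only in the multiply-laced types. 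Every non-simple positive root is a sum of two positive roots, so for $p\ge 5$ (all types) and for $p=3$ in every type except $G_2$, each such structure constant is a unit modulo $p$; hence $x_{-\gamma}\in[\ul,\ul]$ for all non-simple $\gamma$, the only surviving classes are the $\phi_\alpha$ with $\alpha\in\Pi$, and part (a) follows.

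For part (b) I would carry out the explicit computation in $G_2$ at $p=3$, where $3\equiv 0$ allows the constants of magnitude $3$ to vanish. Writing the positive roots as $\alpha_1,\alpha_2,\alpha_1+\alpha_2,2\alpha_1+\alpha_2,3\alpha_1+\alpha_2,3\alpha_1+2\alpha_2$ (with $\alpha_1$ short), the key point is that $3\alpha_1+\alpha_2$ has the single decomposition $\alpha_1+(2\alpha_1+\alpha_2)$ into positive roots, and the associated $\alpha_1$-string $2\alpha_1+\alpha_2,\ \alpha_1+\alpha_2,\ \alpha_2$ gives $r=2$, whence $N=\pm 3\equiv 0$; thus $x_{-(3\alpha_1+\alpha_2)}\notin[\ul,\ul]$ and $\phi_{3\alpha_1+\alpha_2}$ survives. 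For the remaining non-simple roots $\alpha_1+\alpha_2$, $2\alpha_1+\alpha_2$, and $3\alpha_1+2\alpha_2$ I would exhibit in each case a decomposition into positive roots whose structure constant is $\pm 1$ or $\pm 2$, hence a unit modulo $3$, placing the corresponding root vector in $[\ul,\ul]$. This leaves exactly $\{\phi_{\alpha_1},\phi_{\alpha_2},\phi_{3\alpha_1+\alpha_2}\}$, as claimed.

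The main obstacle is the case-by-case structure-constant bookkeeping rather than any conceptual difficulty: one must pin down the precise root strings (and hence the exact constants, not merely their magnitude) in the $G_2$ computation, verify that $3\alpha_1+\alpha_2$ has no second decomposition into positive roots that would reintroduce it into $[\ul,\ul]$ modulo $3$, and apply the bound $|N_{\alpha,\beta}|\le 3$ together with its refinement by root length correctly, so that the generic argument of part (a) is genuinely exhaustive across all types and all the listed primes.
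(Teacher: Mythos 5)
Your argument is correct. The paper itself offers no proof of this proposition --- it is recalled as a result of Jantzen \cite{Jan2} --- so there is nothing internal to compare against; but your reduction is exactly the standard one and matches what Jantzen does. The chain $\opH^1(\ul,k)=\ker d_1=(\ul/[\ul,\ul])^*$, the observation that $[\ul,\ul]$ is a $T$-stable sum of root spaces so that $\phi_\gamma$ survives precisely when no decomposition $\gamma=\alpha+\beta$ into positive roots has structure constant that is a unit in $k$, and the bound $|N_{\alpha,\beta}|=r+1\le 3$ (with $3$ only in $G_2$ and $2$ only in the multiply laced types, since root strings containing $\alpha+\beta$ force $q\ge 1$ and hence $r\le 2$) together give part (a) for $p\ge 5$ in all types and for $p=3$ outside $G_2$. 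Your $G_2$, $p=3$ computation is also right: $3\al_1+\al_2$ has the unique decomposition $\al_1+(2\al_1+\al_2)$, whose $\al_1$-string $\al_2,\ \al_1+\al_2,\ 2\al_1+\al_2,\ 3\al_1+\al_2$ gives $r=2$ and $N=\pm 3\equiv 0$, while $\al_1+\al_2$, $2\al_1+\al_2$, and $3\al_1+2\al_2$ each admit a decomposition with constant $\pm 1$ or $\pm 2$ (e.g.\ $\al_2+(3\al_1+\al_2)$ has $r=0$ for the last one). The only point worth making explicit in a write-up is the uniqueness of the decomposition of $3\al_1+\al_2$ (since $2\al_1$ and $3\al_1$ are not roots), which you flag yourself; with that checked, the proof is complete.
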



\subsection{\bf Second cohomology of $\ul$:} For $p \geq 3$, the second cohomology
groups $\opH^2(\ul,k)$ were computed by the authors in \cite[Thm. 4.4]{BNP2}. We remind the
reader of the results here.

\begin{thm}\label{T:h2} Let $p \geq 3$ and $\pi = \{ -w\cdot 0 ~ : ~ w \in W,\ l(w) = 2\}$.
As a $T$-module
$$
\opH^2(\ul,k) \cong \bigoplus_{\la \in \pi \cup \pi'}\la
$$
where $\pi'$ is given below.  
Moreover, if $\la = -w\cdot 0$ with $w = s_{\al}s_{\be}$, then the 
corresponding cohomology class
is represented by 
$\phi_{\al}\wedge\phi_{-\langle\be,\al^{\vee}\rangle\al + \be}$.
\begin{itemize}
\item[(a)] $p > 3$ or $\Phi$ is of type $A_n$, $B_2 = C_2$, $D_n$, or $E_n$: 
$\pi' = \emptyset$.
\item[(b)] $p = 3$ and $\Phi$ is of type $B_n$, $n \geq 3$: 
$\pi' = \{\al_{n-2} + 2\al_{n-1} + 3\al_n\}$ corresponding to the
cohomology class
$$
\phi_{\al_{n}}\wedge\phi_{\al_{n-2} + 2\al_{n-1} + 2\al_{n}}
- \phi_{\al_{n-1} + \al_{n}}\wedge\phi_{\al_{n-2} + \al_{n-1} + 2\al_{n}}
+ \phi_{\al_{n-2} + \al_{n-1} + \al_{n}}\wedge\phi_{\al_{n-1} + 2\al_{n}}.
$$
\item[(c)] $p = 3$ and $\Phi$ is of type $C_n$, $n \geq 3$: 
$\pi' = \{\al_{n-2} + 3\al_{n-1} + \al_n\}$ corresponding to the 
cohomology class
$$
\phi_{\al_{n-1}}\wedge\phi_{\al_{n-2}+2\al_{n-1}+\al_n} -
\phi_{\al_{n-2} + \al_{n-1}}\wedge\phi_{2\al_{n-1} + \al_{n}}.
$$
\item[(d)] $p = 3$ and $\Phi$ is of type $F_4$: 
$\pi' = \{\al_1 + 2\al_2 + 3\al_3, \al_{2} + 3\al_{3} + \al_4\}$
corresponding to the cohomology classes
$$
\phi_{\al_3}\wedge\phi_{\al_1 + 2\al_2 + 2\al_3} 
- \phi_{\al_2 + \al_3}\wedge\phi_{\al_1 + \al_2 + 2\al_3}
+ \phi_{\al_1 + \al_2 + \al_3}\wedge\phi_{\al_2 + 2\al_3},
$$
$$
\phi_{\al_3}\wedge\phi_{\al_2+2\al_3+\al_4} -
\phi_{\al_3 + \al_4}\wedge\phi_{\al_2 + 2\al_3}.
$$
\item[(e)] $p = 3$ and $\Phi$ is of type $G_2$: $\pi' = \{3\al_1 + \al_2,
3\al_1 + 3\al_2, 6\al_1 + 3\al_2, 4\al_1 + 2\al_2\}$
corresponding to the cohomology classes
$$
\phi_{\al_1}\wedge\phi_{2\al_1 + \al_2},
\phi_{\al_2}\wedge\phi_{3\al_1 + 2\al_2},
\phi_{3\al_1 + \al_2}\wedge\phi_{3\al_1 + 2\al_2},$$
$$\phi_{\al_1}\wedge\phi_{3\al_1 + 2\al_2} + 
        \phi_{\al_1 + \al_2}\wedge\phi_{3\al_1 + \al_2}.
$$
\end{itemize}
\end{thm}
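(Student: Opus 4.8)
The plan is to compute $\opH^2(\ul,k)$ one $T$-weight at a time. Because the bracket on $\ul$ is $T$-equivariant, every differential $d_i$ preserves weights and the cohomology inherits the weight grading, so it suffices to determine $\dim\opH^2(\ul,k)_\nu$ for each $\nu\in X(T)$. Recalling that $\phi_\al$ has weight $\al\in\Phi^+$, the weight-$\nu$ part of the relevant stretch of the complex is
\[
(\ul^*)_\nu \xrightarrow{\ d_1\ } (\ull)_\nu \xrightarrow{\ d_2\ } (\ulll)_\nu ,
\]
where $(\ul^*)_\nu$ is one-dimensional, spanned by $\phi_\nu$, exactly when $\nu\in\Phi^+$ and is zero otherwise, while $(\ull)_\nu$ has as a basis the wedges $\phi_\mu\wedge\phi_{\nu-\mu}$ indexed by unordered pairs of distinct positive roots summing to $\nu$. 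Consequently a class of weight $\nu$ can be a coboundary only when $\nu$ is itself a positive root, the only candidate then being $d_1\phi_\nu=-\sum_{\mu+\mu'=\nu}N_{-\mu,-\mu'}\,\phi_\mu\wedge\phi_{\mu'}$ in terms of the Chevalley structure constants $N$. For any weight $\nu$ that is not a root --- and all the weights listed in $\pi'$ are of this type --- one has $\opH^2(\ul,k)_\nu=\ker(d_2|_\nu)$, so for those weights the task reduces to identifying cocycles.

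To produce the classes I would substitute the explicit cochains into the formula of Lemma~\ref{L:differential}, which for a two-fold wedge reads $d_2(\phi\wedge\psi)=d_1(\phi)\wedge\psi-\phi\wedge d_1(\psi)$, together with $d_1\phi_\ga=-\sum_{\mu+\mu'=\ga}N_{-\mu,-\mu'}\,\phi_\mu\wedge\phi_{\mu'}$. For the generic (``Kostant'') classes one first checks that $-w\cdot 0=\al+s_\al(\be)$ when $w=s_\al s_\be$ has length two (with $\al,\be$ simple and $s_\al(\be)=\be-\langle\be,\al^\vee\rangle\al$ positive), so that the stated representative $\phi_\al\wedge\phi_{s_\al(\be)}$ has the correct weight; since $\al$ is simple, $d_1\phi_\al=0$ by Proposition~\ref{P:h1}, and the cocycle condition collapses to the vanishing of $\phi_\al\wedge d_1\phi_{s_\al(\be)}$, a short structure-constant identity. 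For each exceptional class in parts (b)--(e) I would expand $d_2$ on the given two- or three-term combination and check that the resulting coefficients, which are integral combinations of the $N$, vanish modulo $3$; the very same expansion is nonzero in characteristic $0$ and for $p>3$, which is exactly why these classes are special to $p=3$. As the supporting weights are not roots, each such cocycle is automatically a nonzero class, and inspection of the monomials appearing shows that classes of a common weight are linearly independent.

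It then remains to prove that $\pi\cup\pi'$ exhausts the cohomology, i.e.\ that $\opH^2(\ul,k)_\nu$ vanishes for every other weight and has precisely the predicted dimension at the listed weights. I would control this through two rigidities. First, the Euler characteristic $\sum_i(-1)^i\dim\opH^i(\ul,k)_\nu$ is independent of the characteristic, being a pure count of weight multiplicities; in characteristic $0$ it is governed by Kostant's theorem \cite{Kos}, and combined with the known value of $\opH^1(\ul,k)$ from Proposition~\ref{P:h1} this pins down $\dim\opH^2(\ul,k)_\nu$ once $\opH^3(\ul,k)_\nu$ is understood in the same weight. Second, the ranks of $d_1|_\nu$ and $d_2|_\nu$ are governed by minors of matrices whose entries are the integers $N_{\ga,\de}$; since the weights relevant to degree two are of low height and the participating structure constants are small, one verifies that these minors have no prime divisor exceeding $3$, so for $p>3$ the ranks equal their characteristic-$0$ values and Kostant's answer transfers, giving $\pi'=\emptyset$. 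In the simply-laced types every $N=\pm1$ and the same conclusion holds at all $p$. This disposes of part (a) uniformly, covering types $A_n$, $D_n$, $E_n$ and $B_2=C_2$ at every admissible prime as well as all types when $p>3$.

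The main obstacle is the remaining analysis at $p=3$ for the non-simply-laced systems $B_n\,(n\geq 3)$, $C_n$, $F_4$, and $G_2$, where ranks of $d_1|_\nu$ or $d_2|_\nu$ genuinely drop modulo $3$ and extra classes appear. Here I would carry out a careful weight-by-weight audit: enumerate, for each candidate weight, the decompositions into sums of two positive roots, compute the matrix of $d_2$ (and of $d_1$ into that weight) over $\mathbb{F}_3$, and match its cohomology against the explicit representatives in (b)--(e). The delicate bookkeeping is organizing which root-sum coincidences and structure constants collapse mod $3$ --- precisely the root-sum constraints foreshadowed for Section~3 --- and this is most intricate for $G_2$, where four distinct classes arise and short/long root interactions must be tracked with care. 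One must also confirm that each produced weight is distinct and that no further weight slips through, so that the final $T$-module is exactly $\bigoplus_{\la\in\pi\cup\pi'}\la$.
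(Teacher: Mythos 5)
First, note that the paper itself contains no proof of Theorem \ref{T:h2}: the statement is imported verbatim from \cite[Thm.\ 4.4]{BNP2}, where it is established not by direct linear algebra on the Koszul complex but by playing that complex off against the known $B$-, $B_1$- and $U_1$-cohomology in degrees $\le 2$ --- the same strategy the present paper runs in degree $3$. Your proposal is therefore a genuinely different, purely computational route, and its skeleton (weight-by-weight analysis of $(\ul^*)_\nu \to (\ull)_\nu \to (\ulll)_\nu$, explicit cocycle checks on the stated representatives, and a rank comparison with characteristic zero) is a reasonable program.

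It is not yet a proof, however, for three concrete reasons. (1) Your assertion that every weight in $\pi'$ fails to be a positive root is false: in type $G_2$ the weight $3\al_1+\al_2$ \emph{is} a root, so the class $\phi_{\al_1}\wedge\phi_{2\al_1+\al_2}$ could a priori be the coboundary $d_1(\phi_{3\al_1+\al_2})$; you must separately check that this differential vanishes mod $3$ (it does --- that is precisely Proposition \ref{P:h1}(b) --- but the step cannot be waved away by the non-root claim). (2) The argument that for $p>3$, or in simply-laced types for all $p\ge 3$, the ranks of $d_1|_\nu$ and $d_2|_\nu$ agree with their characteristic-zero values ``because the structure constants are small'' is not a proof: minors of an integer matrix with entries in $\{0,\pm1,\pm2\}$ can be divisible by odd primes (already a $3\times 3$ matrix with entries $0,\pm1$ can have determinant $\pm3$ or $\pm4$), so bounding the entries does not bound the primes at which rank drops. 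Exhibiting the relevant minors weight by weight in every type is the actual content of the theorem, and you have deferred rather than done it. (3) The Euler-characteristic step needs $\dim\opH^i(\ul,k)_\nu$ for \emph{all} $i$ with $\Lambda^i(\ul^*)_\nu\neq 0$, not merely $i\le 3$; for the weights at issue this information is not available a priori, and in the present paper the logical flow is the reverse (Theorem \ref{T:h2} is an input to the determination of $\opH^3(\ul,k)$), so invoking $\opH^3$ here risks circularity. To close the argument you must either carry out the finite but substantial mod-$p$ linear algebra in all types, or follow \cite{BNP2} and bring in the $B$-cohomology.
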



\subsection{\bf General results:} We present a general observation about weight spaces of $\Lambda^n(\ul^*)$.

\begin{prop}(\cite[Prop. 2.2]{FP2}, \cite[Prop. 2.3]{BNP2}) Let $w \in W$. Then
$$
\dim_k \Lambda^n(\ul^*)_{-w\cdot 0} =
\begin{cases}
1 &\text{ if } n = l(w)\\
0 &\text{ otherwise}.
\end{cases}
$$
Let $x_w \in \Lambda^{l(w)}(\ul^*)$ be an element of weight $-w\cdot 0$.
Then $x_w$ represents a cohomology class in $\opH^{l(w)}(\ul,k)$.
\end{prop}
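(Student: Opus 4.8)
The plan is to translate both assertions into a single combinatorial statement about positive roots. Since $\{\phi_\alpha : \alpha \in \Phi^+\}$ is a weight basis of $\ul^*$ with $\phi_\alpha$ of $T$-weight $\alpha$, a weight basis of $\Lambda^n(\ul^*)$ is given by the wedges $\phi_{\beta_1}\wedge\cdots\wedge\phi_{\beta_n}$ indexed by $n$-element subsets $\{\beta_1,\dots,\beta_n\}\subseteq\Phi^+$, and such a basis vector has weight $\beta_1+\cdots+\beta_n$. Hence $\dim_k\Lambda^n(\ul^*)_\mu$ equals the number of $n$-element subsets of $\Phi^+$ whose sum is $\mu$. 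First I would record the standard identity $-w\cdot 0 = \rho - w\rho = \sum_{\alpha\in\Phi_w}\alpha$, where $\Phi_w = \{\alpha\in\Phi^+ : w^{-1}\alpha\in\Phi^-\}$ is the inversion set, of cardinality $\ell(w)$. This already exhibits one subset of the required type, of size $\ell(w)$, and pins down the only possible value $n=\ell(w)$.

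The heart of the argument, and the step I expect to be the main obstacle, is to prove that $\Phi_w$ is the \emph{only} set of distinct positive roots summing to $\rho - w\rho$. I would argue by induction on $\ell(w)$, the base case $w=e$ being the empty sum. For the inductive step choose a simple root $\alpha_i$ in the left descent set of $w$, so that $w' := s_i w$ satisfies $\ell(w') = \ell(w) - 1$ and $\Phi_w = \{\alpha_i\}\sqcup s_i(\Phi_{w'})$; correspondingly $\rho - w\rho = \alpha_i + s_i(\rho - w'\rho)$, while $w'^{-1}\alpha_i\in\Phi^+$ gives $\alpha_i\notin\Phi_{w'}$. Given any set $S\subseteq\Phi^+$ of distinct roots with $\sum_{\beta\in S}\beta = \rho - w\rho$, the two key points are: first, $\alpha_i\in S$, since otherwise $s_i$ carries $S$ into $\Phi^+$ and $\{\alpha_i\}\cup s_i(S)$ becomes a representation of $\rho - w'\rho$ containing $\alpha_i$, forcing $\{\alpha_i\}\cup s_i(S)=\Phi_{w'}$ by the inductive uniqueness and contradicting $\alpha_i\notin\Phi_{w'}$; and second, once $\alpha_i\in S$, applying $s_i$ to $S\setminus\{\alpha_i\}$ produces a set of distinct positive roots summing to $\rho - w'\rho$, which by induction equals $\Phi_{w'}$, so $S=\{\alpha_i\}\sqcup s_i(\Phi_{w'})=\Phi_w$. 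This establishes the dimension formula in part (i), the only subtlety being the careful bookkeeping that $s_i$ preserves positivity away from $\alpha_i$.

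For the final assertion I would use that the Chevalley--Eilenberg differentials are $T$-equivariant. Indeed $d_1\phi_\gamma$ is a $k$-linear combination of terms $\phi_\alpha\wedge\phi_\beta$ with $\alpha+\beta=\gamma$, because the bracket of root vectors is supported on the sum of the roots, and the higher $d_n$ are assembled from $d_1$ via the product rule of Lemma~\ref{L:differential}; hence every $d_n$ preserves $T$-weights. Let $x_w$ be a nonzero element of the one-dimensional space $\Lambda^{\ell(w)}(\ul^*)_{-w\cdot 0}$. Then $d_{\ell(w)}(x_w)$ lies in $\Lambda^{\ell(w)+1}(\ul^*)_{-w\cdot 0}$, which vanishes by part (i) since $\ell(w)+1\neq\ell(w)$. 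Therefore $x_w$ is a cocycle and represents a class in $\opH^{\ell(w)}(\ul,k)$. I would stress that this shows only that $x_w$ is a cocycle; nonvanishing of the resulting cohomology class is the deeper content of Kostant's theorem and is not being claimed here.
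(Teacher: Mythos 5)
Your proof is correct and is essentially the standard argument behind this result: the paper itself states the proposition without proof, citing \cite{FP2} and \cite{BNP2}, and those sources rest on exactly the two ingredients you use, namely the combinatorial fact that the inversion set $\Phi_w$ is the unique subset of $\Phi^+$ summing to $\rho - w\rho$ (proved by the same descent induction) and the $T$-equivariance of the Chevalley--Eilenberg differential. Your inductive step is carried out carefully (in particular the check that $s_i$ preserves positivity and distinctness away from $\alpha_i$), so nothing further is needed.
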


Over characteristic zero, $\opH^{\bullet}(\ul,k)$ was computed by
Kostant \cite{Kos}. The cohomology classes in the preceding
proposition in fact yield a $T$-basis. In prime characteristic, it is known for
$p \geq h - 1$ by work in  \cite{FP2} \cite{PT} and \cite{UGA} that the formal characters of these
cohomology groups are the same as in characteristic zero. Our goal will be 
to show that this holds for $\opH^3(\ul,k)$ when the prime $p$ is not too small. 
See Theorem \ref{T:u-coho}.


\subsection{} We first investigate the nature of weights $-w \cdot 0$.  Let $w \in W$ have $\ell(w) = m$.  
Then $w$ can be expressed in reduced form as $w = s_1s_2\cdots s_m$ where $s_i = s_{\be}$ 
for some $\be \in \Pi$.  Inductively, one obtains the following.

\begin{prop}\label{P:wdotzero} Given $w = s_1s_2\cdots s_m$, let $\be_i$ denote the simple root
corresponding to the simple reflection $s_i$.  Then
\begin{align*}
-w\cdot 0 &= - s_1s_2\cdots s_m(\be_m) - s_1s_2\cdots s_{m-1}(\be_{m-1}) - \dots
			- s_1s_2(\be_2) - s_1(\be_1)\\
	& = - s_1s_2\cdots s_{m-1}(-\be_m) - s_1s_2\cdots s_{m-2}(-\be_{m-1}) - \dots
			- s_1(-\be_2) - (- \be_1)\\
	&= s_1s_2\cdots s_{m-1}(\be_m) + s_1s_2\cdots s_{m-2}(\be_{m-1}) + \dots
			+ s_1(\be_2) + \be_1.
\end{align*}
Furthermore, each of the summands lies in $\Phi^{+}$.  
\end{prop}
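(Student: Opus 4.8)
The plan is to reduce the whole statement to the single identity $-w\cdot 0 = \rho - w(\rho)$ together with a telescoping sum, and to recognize that the three displayed expressions are merely algebraic rewritings of one another via the relation $s_{\be_k}(\be_k) = -\be_k$. First I would unwind the dot action: by definition $w\cdot\lambda = w(\lambda+\rho)-\rho$, so $w\cdot 0 = w(\rho)-\rho$ and hence $-w\cdot 0 = \rho - w(\rho)$. Writing $w = s_1\cdots s_m$ and telescoping (with the convention that the empty product is the identity), one gets
\[
\rho - w(\rho) = \sum_{k=1}^m \bigl(s_1\cdots s_{k-1}(\rho) - s_1\cdots s_k(\rho)\bigr) = \sum_{k=1}^m s_1\cdots s_{k-1}\bigl(\rho - s_{\be_k}(\rho)\bigr).
\]
Since each $\be_k$ is simple, $\langle\rho,\be_k^\vee\rangle = 1$, so $s_{\be_k}(\rho) = \rho - \be_k$ and therefore $\rho - s_{\be_k}(\rho) = \be_k$. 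Substituting yields
\[
-w\cdot 0 = \sum_{k=1}^m s_1\cdots s_{k-1}(\be_k) = \be_1 + s_1(\be_2) + \cdots + s_1\cdots s_{m-1}(\be_m),
\]
which is exactly the third displayed line. If one prefers the ``inductively'' phrasing, this last step packages as an induction on $m$: peel off $s_1$ using $\rho - s_1w''(\rho) = (\rho - s_1\rho) + s_1(\rho - w''\rho)$ with $w'' = s_2\cdots s_m$, note $\rho - s_1\rho = \be_1$, and apply the inductive hypothesis to the reduced expression $s_2\cdots s_m$.

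To recover the first two displayed forms I would simply run the sign bookkeeping
\[
s_1\cdots s_{k-1}(\be_k) = -s_1\cdots s_{k-1}(-\be_k) = -s_1\cdots s_{k-1}\bigl(s_{\be_k}(\be_k)\bigr) = -s_1\cdots s_k(\be_k),
\]
which rewrites the $k$-th summand as the corresponding term of the second line ($-s_1\cdots s_{k-1}(-\be_k)$) and of the first line ($-s_1\cdots s_k(\be_k)$), respectively. Thus all three expressions coincide term by term.

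Finally, for the positivity claim I would invoke the classical length criterion for reduced expressions. Since $w = s_1\cdots s_m$ is reduced, every prefix $s_1\cdots s_j$ is reduced with $\ell(s_1\cdots s_j)=j$; then the standard fact that $\ell(vs_{\be}) > \ell(v)$ if and only if $v(\be)\in\Phi^+$ (for $\be$ simple), applied to $v = s_1\cdots s_{k-1}$ and $\be = \be_k$ with $\ell(s_1\cdots s_k)=k>k-1=\ell(s_1\cdots s_{k-1})$, shows that each summand $s_1\cdots s_{k-1}(\be_k)$ lies in $\Phi^+$. None of these steps is a genuine obstacle: the computation is entirely formal, the only point requiring attention is the sign bookkeeping that identifies the three forms, and the single external input is the reduced-expression length criterion, which is classical.
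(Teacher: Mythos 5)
Your proof is correct and follows essentially the same route the paper intends: the paper simply says ``inductively, one obtains the following,'' and your telescoping of $\rho - w(\rho)$ together with $s_{\be_k}(\rho)=\rho-\be_k$ is exactly the closed form of that induction, while the sign bookkeeping and the length criterion $\ell(vs_\be)>\ell(v)\iff v(\be)\in\Phi^+$ (applied to the prefixes of the reduced expression) are the standard arguments for the remaining assertions. No gaps.
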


In particular, suppose that $w \in W$ has length 3.  
Write $w = s_{\be_1}s_{\be_2}s_{\be_3}$.  Then
\begin{align*}
-w\cdot 0 &= -s_{\be_1}s_{\be_2}s_{\be_3}\cdot 0\\
	& = s_{\be_1}s_{\be_2}(\be_3) + s_{\be_1}(\be_2) + \be_1\\
	&= [\be_3 - \langle\be_3,\be_2^{\vee}\rangle\be_2 - 
			(\langle\be_3,\be_1^{\vee}\rangle - \langle\be_3,\be_2^{\vee}\rangle\langle\be_2,\be_1^{\vee}\rangle)\be_1]
			+ [\be_2 - \langle\be_2,\be_1^{\vee}\rangle\be_1] + [\be_1]
\end{align*}
where the roots in brackets are each positive roots.

One can now  conclude the following.

\begin{cor}\label{C:h3wdot0} Let $\be_1, \be_2, \be_3 \in \Pi$ such that
$w = s_{\be_1}s_{\be_2}s_{\be_3}$ has length 3.
Then $\phi_{\be_1}\wedge\phi_{s_{\be_1}(\be_2)}\wedge\phi_{s_{\be_1}s_{\be_2}(\be_3)}$
has weight $-w\cdot 0$  and
represents a cohomology class in $\opH^3(\ul,k)$.
\end{cor}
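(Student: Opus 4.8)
The plan is to read off the cohomology class directly from Proposition \ref{P:wdotzero} together with the weight-dimension count in the earlier Proposition (\cite[Prop.~2.2]{FP2}, \cite[Prop.~2.3]{BNP2}). First I would observe that, by that weight-count proposition applied with $n = 3$, the weight space $\Lambda^3(\ul^*)_{-w\cdot 0}$ is one-dimensional whenever $\ell(w) = 3$, and any nonzero element of it automatically represents a cohomology class in $\opH^3(\ul,k)$. So the entire task reduces to exhibiting a single nonzero element of $\Lambda^3(\ul^*)$ of weight $-w\cdot 0$ and confirming that the proposed wedge is such an element.

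Next I would pin down the weight of the candidate wedge. The dual basis vector $\phi_\gamma$ carries $T$-weight $\gamma$ (since $\phi_\gamma(x_\beta) = \delta_{-\gamma,\beta}$ and $x_\beta$ has weight $\beta$ for $\beta \in \Phi^-$), so a wedge $\phi_{\gamma_1}\wedge\phi_{\gamma_2}\wedge\phi_{\gamma_3}$ has weight $\gamma_1 + \gamma_2 + \gamma_3$. For the element named in the corollary this sum is
\[
\be_1 + s_{\be_1}(\be_2) + s_{\be_1}s_{\be_2}(\be_3),
\]
which is exactly the expression computed for $-w\cdot 0$ in Proposition \ref{P:wdotzero} (the displayed length-$3$ specialization). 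Thus the proposed wedge lies in the weight space $\Lambda^3(\ul^*)_{-w\cdot 0}$.

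It then remains to check that the three indexing roots $\be_1$, $s_{\be_1}(\be_2)$, $s_{\be_1}s_{\be_2}(\be_3)$ are positive and \emph{distinct}, so that the wedge is genuinely nonzero in $\Lambda^3(\ul^*)$. Positivity is already supplied by the final sentence of Proposition \ref{P:wdotzero} (each summand lies in $\Phi^+$). For distinctness I would invoke the standard fact that the roots $\{\be_1,\, s_{\be_1}(\be_2),\, s_{\be_1}s_{\be_2}(\be_3)\}$ are precisely the positive roots sent to negative roots by $w^{-1}$, and a reduced word of length $\ell(w) = 3$ produces exactly $\ell(w) = 3$ such roots, all distinct. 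Consequently the wedge is a nonzero element of the one-dimensional space $\Lambda^3(\ul^*)_{-w\cdot 0}$, and by the weight-count proposition it represents a class in $\opH^3(\ul,k)$.

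The only genuinely delicate point is the distinctness of the three roots; everything else is bookkeeping on weights. I expect this to be immediate from the inversion-set description of a reduced expression, but one should confirm that the hypothesis $\ell(w) = 3$ (rather than merely writing $w$ as a product of three reflections) is what guarantees the word $s_{\be_1}s_{\be_2}s_{\be_3}$ is reduced and hence yields three distinct positive roots. Given that, the corollary follows at once.
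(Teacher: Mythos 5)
Your argument is correct and is essentially the paper's own (implicit) proof: the wedge has weight $\be_1 + s_{\be_1}(\be_2) + s_{\be_1}s_{\be_2}(\be_3) = -w\cdot 0$ by Proposition \ref{P:wdotzero}, and the one-dimensionality of $\Lambda^3(\ul^*)_{-w\cdot 0}$ from the cited FP2/BNP2 proposition guarantees that any nonzero element of that weight space represents a class in $\opH^3(\ul,k)$. Your extra care about distinctness of the three roots (via the inversion set of the reduced word) is a point the paper leaves tacit, but it is the standard fact and your reasoning for it is sound.
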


Note for $w = s_{\be_1}s_{\be_2}s_{\be_3} \in W$ with $\ell(w) = 3$, $-w\cdot 0$ involves at most 
three simple roots.  Indeed, if $\be_1$, $\be_2$, and $\be_3$ are all distinct, then 
$-w\cdot 0 = i\be_1 + j\be_2 + \be_3$ for some $i, j > 0$ (with a more precise formula
determined as above). On the other hand, suppose the
simple roots are not distinct.  In order to be of length three, the only possibility is
that $\be_1 = \be_3$ and $\be_2$ must be adjacent to $\be_1$. Then we have that
$-w\cdot 0 = -s_{\be_1}s_{\be_2}s_{\be_1}\cdot 0 = i\be_1 + j\be_2$ for some $i \geq j > 0$.


\subsection{} We now identify some limitations on which other
wedge products $\phi_{\al}\wedge\phi_{\be}\wedge\phi_{\ga}$ or linear
combinations thereof can represent cohomology classes.
Since the differentials are additive and preserve the action of
$T$, of interest are linear combinations of wedge products that
have the same weight. To avoid ``trivial'' linear combinations,
we say that an expression
$\sum c_{\al,\be,\ga}\phi_{\al}\wedge\phi_{\be}\wedge\phi_{\ga} \in \Lambda^3(\ul^*)$
is in {\it reduced} form if 
a triple $(\al,\be,\ga)$ appears at most once and each $c_{\al,\be,\ga} \neq 0$.

While we are interested particularly in degree 3, me make the following general 
observation which extends \cite[Prop. 2.4]{BNP2}.

\begin{prop}\label{P:d1} Let $x = \sum_j c_j\bw_{i = 1}^{n}\phi_{\si_{i,j}}$
be an element in $\Lambda^n(\ul^*)$ in reduced form of weight $\gamma$
for some $\gamma \in X(T)$. If $d_n(x) = 0$, then $d_1(\phi_{\si_{i,j}}) = 0$
for at least one $\si_{i,j}$ appearing in the sum. 
\end{prop}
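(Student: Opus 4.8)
The plan is to prove the statement by induction on the wedge-degree $n$, using crucially that the differential $d_1$ strictly lowers heights. Write $\operatorname{ht}(\alpha)$ for the height of $\alpha \in \Phi^+$ (the sum of its coefficients in the basis of simple roots). From the definition of $d_1$ one gets, for each $\alpha \in \Phi^+$, an expansion $d_1(\phi_\alpha) = -\sum_{\beta + \gamma = \alpha}c_{\beta,\gamma}\,\phi_\beta\wedge\phi_\gamma$ with $\beta,\gamma \in \Phi^+$ and structure constants $c_{\beta,\gamma} \in k$. In particular, every root $\beta$ or $\gamma$ occurring in $d_1(\phi_\alpha)$ satisfies $\operatorname{ht}(\beta),\operatorname{ht}(\gamma) < \operatorname{ht}(\alpha)$, so applying $d_1$ to a factor of a wedge monomial replaces it by factors of strictly smaller height while leaving the other factors untouched.

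For the base case $n = 1$, a reduced element of weight $\gamma$ is necessarily $x = c\,\phi_\gamma$ for a single positive root $\gamma$, and $d_1(x) = 0$ forces $d_1(\phi_\gamma) = 0$. For the inductive step I would choose a root $\tau$ of maximal height among all the $\si_{i,j}$ that appear in $x$. After permuting factors (and absorbing the resulting signs), write $x = \phi_\tau\wedge y + z$, where $y \in \Lambda^{n-1}(\ul^*)$ gathers the monomials of $x$ that contain $\phi_\tau$, with $\phi_\tau$ deleted, and $z \in \Lambda^n(\ul^*)$ gathers those that do not. Then $y$ and $z$ involve only the $\phi_\alpha$ with $\alpha \neq \tau$, and since $\tau$ occurs in $x$, the element $y$ is a nonzero element in reduced form of weight $\gamma - \tau$.

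Applying the product rule of Lemma \ref{L:differential} gives
$$
d_n(x) = d_1(\phi_\tau)\wedge y - \phi_\tau\wedge d_{n-1}(y) + d_n(z).
$$
By the height observation the pieces of $d_1(\phi_\tau)$ have height $< \operatorname{ht}(\tau)$, hence differ from $\tau$; likewise every factor occurring in $d_n(z)$ and in $d_{n-1}(y)$ differs from $\tau$. Thus none of $d_1(\phi_\tau)\wedge y$, $d_{n-1}(y)$, or $d_n(z)$ involves the basis vector $\phi_\tau$, so the entire $\phi_\tau$-containing part of $d_n(x)$ equals $-\phi_\tau\wedge d_{n-1}(y)$. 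Since the wedge monomials in the basis $\{\phi_\alpha : \alpha \in \Phi^+\}$ are linearly independent, the hypothesis $d_n(x) = 0$ forces this part to vanish on its own, and because $\phi_\tau\wedge(-)$ is injective on the span of monomials omitting $\phi_\tau$, we get $d_{n-1}(y) = 0$. Applying the inductive hypothesis to the nonzero reduced cocycle $y$ produces a root $\si$ appearing in $y$ with $d_1(\phi_\si) = 0$; as every root of $y$ also appears in $x$, the induction closes. (The case $n = 2$ recovers \cite[Prop. 2.4]{BNP2}.)

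The main, and essentially only, obstacle is the bookkeeping needed to isolate the $\phi_\tau$-part of $d_n(x)$: one must be certain that no term obtained by differentiating a factor other than $\phi_\tau$ can recreate the vector $\phi_\tau$. This is exactly what the strict height drop under $d_1$ guarantees, since $\tau$ has maximal height among the roots present and therefore cannot arise as a proper summand. The signs introduced by moving $\phi_\tau$ to the front and by the Leibniz rule are harmless, being absorbed into the coefficients defining $y$ and $z$; and no tie-breaking among roots of equal height is required, precisely because the factors created by $d_1$ have strictly smaller height.
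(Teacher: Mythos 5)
Your proof is correct, but it takes a genuinely different route from the paper's. The paper argues directly, without induction: it sorts the factors of each monomial by height, orders the monomials of $x$ lexicographically by their last $n-1$ factors, selects the maximal monomial $x_{max} = \bw_{i=1}^{n}\phi_{\si_i}$, and shows through a three-case analysis of possible cancellations that the term $d_1(\phi_{\si_1})\wedge\bw_{i=2}^{n}\phi_{\si_i}$ must vanish on its own, whence $d_1(\phi_{\si_1}) = 0$ for the factor $\si_1$ of \emph{minimal} height in that extremal monomial. You instead induct on $n$ and peel off the factor $\phi_{\tau}$ of \emph{maximal} height over all of $x$: the strict height drop under $d_1$ ensures that $-\phi_{\tau}\wedge d_{n-1}(y)$ is exactly the component of $d_n(x)$ lying in the span of basis monomials containing $\phi_{\tau}$, so $d_n(x)=0$ forces $d_{n-1}(y)=0$ and the induction closes. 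Both arguments rest on the same engine --- $d_1$ strictly lowers height, so the highest root present can never be recreated by differentiating another factor --- but your decomposition into $\phi_{\tau}$-containing and $\phi_{\tau}$-free monomials replaces the paper's cancellation case analysis with a cleaner direct-sum-plus-injectivity argument, at the cost of locating the root killed by $d_1$ only after unwinding the induction rather than exhibiting it as the bottom factor of an explicit monomial of $x$. One small point deserves to be spelled out rather than asserted: $y$ is nonzero and again in reduced form because distinct monomials of $x$ containing $\phi_{\tau}$ remain distinct (with the same nonzero coefficients) after $\phi_{\tau}$ is deleted; this is the step that makes the appeal to the inductive hypothesis legitimate.
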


\begin{proof} Fix an ordering on $\Phi^+$ such that $\Phi^+$ may be
identified as $\Phi^+ = \{\ga_i ~ : ~ 1 \leq i \leq |\Phi^+|\}$ 
with $\text{ht}(\ga_i) \leq \text{ht}(\ga_{i+1})$ for all $i$. 
By reordering if necessary, we may assume that in 
each wedge $\bw_{i = 1}^{n}\phi_{\si_{i,j}}$, we have
$\si_{i,j} \lth \si_{i+1,j}$.  In particular, $\text{ht}(\si_{i,j}) \leq \text{ht}(\si_{i+1,j})$.

Using the ordering given by $\lth$, order the wedge products found in $x$ 
lexicographically based on $\bw_{i = 2}^{n}\phi_{\si_{i,j}}$.  That is, we ignore
the first element in the wedge when forming this ordering.  Note that, for each $j$, since
$\ga = \sum_{i = 1}^{n}\si_{i,j}$, $\si_{1,j}$ is determined if one knows $\ga$ and 
$\{\si_{i,j} ~:~ 2 \leq i \leq n\}$.  In this lexicographical ordering, choose the 
wedge product that is maximal.  For simplicity, we denote this simply by
$x_{max} = \bw_{i = 1}^n\phi_{\si_i}$.  By our construction, for all $j$, we have
$$
\text{ht}\left(\bw_{i=2}^n\phi_{\si_i}\right) := \sum_{i=2}^n\text{ht}(\si_i) \geq 
	\sum_{i=2}^n\text{ht}(\si_{i,j}) = \text{ht}\left(\bw_{i = 2}^{n}\phi_{\si_{i,j}}\right).
$$
Note that equality is possible.  Although in such a case, the actual roots must differ
in some manner since $x$ is assumed to be in reduced form.

We claim that in this element $x_{max}$, we must have $d_1(\phi_{\si_1}) = 0$. 
By assumption, $d_n(x) = 0$ and the coefficient of $x_{max}$ is non-trivial.  From Lemma \ref{L:differential}, 
we have
$$
d_n(x_{max}) = d_1(\phi_{\si_1})\wedge\bw_{i = 2}^{n}\phi_{\si_i} + 
	\sum_{k = 2}^{n}(-1)^{k + 1}\bw_{i=1}^{n}\phi_{\si_i}^k.
$$
For each other summand of $x$, we get a similar expression for the result of 
applying $d_n$.  

Consider the term $d_1(\phi_{\si_1})\wedge\bw_{i = 2}^{n}\phi_{\si_i}$.  Either
this is zero or it must cancel with another term of the form
\begin{equation}\label{Eq:wedge1}
\phi_{\si_{1,j}}\wedge\cdots\wedge\phi_{\si_{l-1,j}}\wedge d_1(\phi_{\si_{l,j}})\wedge
	\phi_{\si_{l+1,j}}\wedge\cdots\wedge\phi_{\si_{n,j}}
\end{equation}
for some integers $j,l$.  We show that the latter cannot happen.

Suppose on the contrary that it did. Then, for each $2 \leq i \leq n$, 
$\phi_{\si_i}$ would have to appear within a wedge as in (\ref{Eq:wedge1}). This 
could happen by involving zero, one, or two terms from the double wedge $d_1(\phi_{\si_{l,j}})$.  

If it involved no components of $d_1(\phi_{\si_{l,j}})$, this would mean that 
$$
\bw_{i = 2}^{n}\phi_{\si_i} = \pm\bw_{i \neq l}\phi_{\si_{i,j}}.
$$
As noted above, by weight considerations, this would imply that $\si_1 = \si_{l,j}$ and moreover
that 
$$
\bw_{i = 1}^{n}\phi_{\si_i} = \pm\bw_{i = 1}^{n}\phi_{\si_{i,j}}
$$
which contradicts the fact that $x$ is in reduced form.

For the remaining cases, note that for any $\eta \in \Phi^{+}$, if
$d_1(\phi_{\eta}) = \sum c_{\al,\be}\phi_{\al}\wedge\phi_{\be}$, then
$\text{ht}(\al) < \text{ht}(\eta)$ and $\text{ht}(\be) < \text{ht}(\eta)$.

The second case would be that $\bw_{i = 2}^{n}\phi_{\si_i}$ is (up to a sign)
the wedge of all but one of the $\phi_{\si_{i,j}}$ (with $i \neq l$) along with 
a $\phi_{\al}$ appearing in $d_1(\phi_{\si_{l,j}})$.  Since
$\text{ht}(\al) < \text{ht}(\si_{l,j})$ and $\text{ht}(\si_{1,j}) \leq \text{ht}(\si_{i,j})$ for $2 \leq i \leq n$, 
$$
\text{ht}\left(\bw_{i=2}^n\phi_{\si_i}\right) <
\text{ht}\left(\bw_{i = 2}^{n}\phi_{\si_{i,j}}\right) \leq
\text{ht}\left(\bw_{i=2}^n\phi_{\si_i}\right).
$$
We can conclude that this is not possible.

Similarly, in the third case, we would have that $\bw_{i = 2}^{n}\phi_{\si_i}$ is (up to a sign)
the wedge of all but two of the $\phi_{\si_{i,j}}$ (with $i \neq l$) along with 
a wedge $\phi_{\al}\wedge\phi_{\be}$ appearing in $d_1(\phi_{\si_{l,j}})$.
Note that $\al + \be = \si_{l,j}$.  Hence,
$$
\text{ht}\left(\bw_{i=2}^n\phi_{\si_i}\right) \leq
\text{ht}\left(\bw_{i = 3}^{n}\phi_{\si_{i,j}}\right) <
\text{ht}\left(\bw_{i = 2}^{n}\phi_{\si_{i,j}}\right) \leq
\text{ht}\left(\bw_{i=2}^n\phi_{\si_i}\right).
$$
Consequently,  this is also not possible.  Thus we must have $d_1(\phi_{\si_1})\wedge\bw_{i=2}^n\phi_{\si_i} = 0$.

If $d_1(\phi_{\si_1}) \neq 0$, then 
$d_1(\phi_{\si_1}) = \sum_{\al + \be = \si_1}c_{\al,\be}\phi_{\al}\wedge\phi_{\be}$.
As above, we have $\text{ht}(\al) < \text{ht}(\si_1) \leq \text{ht}(\si_i)$ for $2 \leq i \leq n$.
The analogous condition holds for $\be$ as well.  Hence, no $\al$ or $\be$ can
equal a $\si_i$, and we must have $d_1(\phi_{\si_1}) = 0$.
\end{proof}


\subsection{} Combining this with Proposition \ref{P:h1}, we get the following.

\begin{cor}\label{C:h3first} Assume that $p \geq 3$. For $p = 3$, assume further that
$\Phi$ is not of type $G_2$.
\begin{itemize}
\item[(a)] Let $x \in \opH^n(\ul,k)$ be a
representative cohomology class in reduced form having weight $\ga$ for
some $\ga \in X(T)$. Then one of the components of $x$ is of
the form $\bw_{i = 1}^{n}\phi_{\si_i}$ for distinct positive roots $\si_i \in \Phi^{+}$
with $\ga = \sum_{i=1}^{n}\si_i$ and at least one $\si_i$ being simple. 
\item[(b)] Suppose $\phi_{\al}\wedge\phi_{\be}\wedge\phi_{\de}$
represents a cohomology class in $\opH^3(\ul,k)$. By reordering if necessary, we
have the following conditions on $\al$, $\be$, and $\de$:
\begin{itemize}
\item[(i)] $\al$ is a simple root.
\item[(ii)] Either  $\be$ is a simple root or $\be = \al + \si$ for some $\si \in \Phi^{+}$,
and this is the unique decomposition of $\be$ as a sum of positive roots.
\item[(iii)] Either $\de$ is a simple root (in which case $\be$ is also) or $\de$ is of 
the form $\al + \si_1$, $\de = \be + \si_2$, or both for some $\si_i \in \Phi^{+}$, and those are the only possible 
decompositions of $\de$ as a sum of positive roots.
\end{itemize}
\end{itemize}
\end{cor}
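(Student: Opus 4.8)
The plan is to derive everything from the single cocycle condition $d_n(x)=0$ together with the two inputs already available: Proposition~\ref{P:d1}, which isolates a distinguished maximal component, and Proposition~\ref{P:h1}, which identifies $\ker d_1 = \opH^1(\ul,k)$ with the span of the simple root vectors $\{\phi_\al : \al\in\Pi\}$. For part (a), apply Proposition~\ref{P:d1} to the reduced class $x$: it produces a component $\bw_{i=1}^n\phi_{\si_i}$ (the lexicographically maximal one, with the $\si_i$ ordered by $\lth$) for which $d_1(\phi_{\si_1})=0$. Since $\phi_{\si_1}$ is a single basis vector lying in $\ker d_1$, Proposition~\ref{P:h1} forces $\si_1\in\Pi$. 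Within one wedge the roots $\si_i$ are automatically distinct (else the wedge vanishes), and $\ga=\sum_i\si_i$ by the $T$-weight, which is exactly (a).

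Part (b) is the $n=3$ refinement. First I would fix the height order $\al\lth\be\lth\de$; since simple roots have height $1$, which is minimal, part (a) lets me take the minimal element $\al$ to be simple, giving (i). With $d_1(\phi_\al)=0$, Lemma~\ref{L:differential} collapses the cocycle condition to
\[
\phi_\al\wedge\phi_\be\wedge d_1(\phi_\de) - \phi_\al\wedge d_1(\phi_\be)\wedge\phi_\de = 0 .
\]
Writing $d_1(\phi_\be)=\sum_{\mu+\nu=\be}c_{\mu,\nu}\phi_\mu\wedge\phi_\nu$ and $d_1(\phi_\de)=\sum_{\si+\tau=\de}c_{\si,\tau}\phi_\si\wedge\phi_\tau$ (sums over decompositions into positive roots), the crux is a height/disjoint-support argument. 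A surviving $4$-fold wedge from the second summand has root multiset $\{\al,\be,\si,\tau\}$, while one from the first has $\{\al,\mu,\nu,\de\}$; because $\de\neq\be$ and $\text{ht}(\si),\text{ht}(\tau)<\text{ht}(\de)$, whereas $\text{ht}(\mu),\text{ht}(\nu)<\text{ht}(\be)\leq\text{ht}(\de)$, the root $\de$ can never be matched across the two summands, so no cross-cancellation occurs; likewise distinct decompositions within a single summand give distinct multisets and cannot cancel each other. Since the surviving $4$-fold wedges are linearly independent, the two summands must vanish separately:
\[
\phi_\al\wedge d_1(\phi_\be)\wedge\phi_\de = 0, \qquad \phi_\al\wedge\phi_\be\wedge d_1(\phi_\de)=0 .
\]

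These two vanishings give (ii) and (iii) termwise. For (ii): each nonzero wedge $\phi_\al\wedge\phi_\mu\wedge\phi_\nu\wedge\phi_\de$ must in fact be a zero wedge, and since $\mu\neq\nu$ and $\mu,\nu\neq\de$ by height, the only available coincidence is $\al\in\{\mu,\nu\}$. Thus every decomposition of $\be$ contributing to $d_1(\phi_\be)$ involves $\al$, so either $d_1(\phi_\be)=0$ (i.e.\ $\be\in\Pi$) or $\be=\al+\si$ with $\si=\be-\al\in\Phi^+$, and this is the only surviving decomposition. For (iii) the identical analysis of $\phi_\al\wedge\phi_\be\wedge d_1(\phi_\de)=0$ shows each contributing decomposition $\de=\si+\tau$ meets $\{\al,\be\}$, whence $\de=\al+\si_1$ or $\de=\be+\si_2$ (possibly both); and if instead $\de\in\Pi$ then $\text{ht}(\de)=1$ forces $\text{ht}(\be)=1$, so $\be\in\Pi$ as well.

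The main obstacle is twofold. First, making the disjoint-support step airtight requires careful height bookkeeping to guarantee both that $\de$ cannot reappear among $\{\al,\be,\si,\tau\}$ and that no within-summand cancellation occurs; the strict inequalities $\text{ht}(\mu),\text{ht}(\nu)<\text{ht}(\be)$ and $\text{ht}(\si),\text{ht}(\tau)<\text{ht}(\de)$, valid because each summand is a proper part of a positive root, are precisely what drive this. Second, upgrading ``the only surviving decomposition'' to ``the unique decomposition as a sum of positive roots'' is the delicate point, and is where the hypotheses $p\geq 3$ and ($p=3\Rightarrow\Phi\neq G_2$) enter: they guarantee through Proposition~\ref{P:h1} that $\ker d_1$ is spanned exactly by the simple roots, so that a genuine second decomposition of $\be$ (resp.\ $\de$) not involving $\al$ (resp.\ $\al$ or $\be$) would contribute a nontrivial term to $d_1$ and is therefore excluded by the vanishings above.
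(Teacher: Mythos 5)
Your argument is correct and follows essentially the same route as the paper's proof: part (a) from Propositions \ref{P:h1} and \ref{P:d1}, and part (b) by ordering by height, observing that $\phi_{\de}$ occurs in every term of $\phi_{\al}\wedge d_1(\phi_{\be})\wedge\phi_{\de}$ but (by the height inequalities) in no term of $\phi_{\al}\wedge\phi_{\be}\wedge d_1(\phi_{\de})$, so the two summands must vanish separately, and then reading off the constraints termwise. The one caveat is that upgrading to \emph{unique} decompositions rests on every individual coefficient $c_{\mu,\nu}$ in $d_1(\phi_{\be})$ and $d_1(\phi_{\de})$ being nonzero mod $p$ (true because the structure constants are $\pm(r+1)$ with $r+1\le 2$ outside type $G_2$), which is slightly more than Proposition \ref{P:h1} literally provides; the paper simply asserts this nonvanishing under the stated hypotheses.
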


\begin{proof} Part (a) follows immediately from Propositions \ref{P:h1} and \ref{P:d1}.
For part (b), by reordering as needed, we may assume that
$\text{ht}(\al) \leq \text{ht}(\be) \leq \text{ht}(\de)$.
From part (a), we have that that $\al$ must be simple. 

By assumption
$d_3(\phi_{\al}\wedge\phi_{\be}\wedge\phi_{\de}) = 0$. We have
\begin{align*}
d_3(\phi_{\al}\wedge\phi_{\be}\wedge\phi_{\de}) &= d_1(\phi_{\al})\wedge\phi_{\be}\wedge\phi_{\de} -
	\phi_{\al}\wedge d_1(\phi_{\be})\wedge\phi_{\de} + 
	\phi_{\al}\wedge\phi_{\be}\wedge d_1(\phi_{\de})\\
	& = - \phi_{\al}\wedge d_1(\phi_{\be})\wedge{\phi_{\de}}
		+ \phi_{\al}\wedge\phi_{\be}\wedge d_1(\phi_{\de}).
\end{align*}
In order for this to be zero, either both terms are independently zero or the terms
cancel each other out.  However, the first wedge product contains $\phi_{\de}$ and
the second wedge product can never contain $\phi_{\de}$ since the roots $\al, \be, \de$
are necessarily distinct. Thus the latter scenario
is impossible.  In other words, we must have both
$$
\phi_{\al}\wedge d_1(\phi_{\be})\wedge{\phi_{\de}} = 0 
\hskip.5in \text{ and } \hskip.5in
\phi_{\al}\wedge\phi_{\be}\wedge d_1(\phi_{\de}) = 0.
$$

Consider the first wedge.  If $d_1(\phi_{\be}) = 0$, then by Proposition 2.2, $\be$ is
simple.  Otherwise, we have
$$
d_1(\phi_{\be}) = \sum_{\si_1 + \si_2 = \be}c_{\si_1,\si_2}\phi_{\si_1}\wedge\phi_{\si_2}
$$
where the sum is over all distinct decompositions of $\be$ as a sum of positive roots.
Note further that under the hypotheses, all coefficients $c_{\si_1,\si_2}$ are non-zero
mod $p$. Therefore, the only way to have 
$\phi_{\al}\wedge d_1(\phi_{\be})\wedge{\phi_{\de}} = 0$ is 
if $d_1(\phi_{\be})$ involves $\phi_{\al}$ or $\phi_{\de}$. 
However, by our height assumption, the latter case is not possible.
That is, there is a unique decomposition of $\be$ into a sum of positive roots
and it has the form $\be = \al + \si$ for some $\si \in \Phi^{+}$. An analogous argument gives the constraints on $\de$.
\end{proof}


\subsection{} We make some further observations about weights that
arise in $\opH^3(\ul,k)$. Recall that our goal is to show that, for sufficiently large primes, all weights
of $\opH^3(\ul,k)$ have the form $-w\cdot 0$ for $w \in W$ with $\ell(w) = 3$.
We have also noted above that weights $-w\cdot 0$ with $\ell(w) = 3$ have the form $i\be_1 + j\be_2 + \be_3$
or $i\be_1 + j\be_2$ for some $\be_i \in \Pi$ and positive integers $i, j$.  
In this section, we show conversely that if such a weight arises as the weight
of a cohomology class, then it must indeed equal $-w\cdot 0$.

To show this, we first need some elementary computations for rank 3 root systems about the differential
$d_1 : \ul^* \to \Lambda^2(\ul^*)$.  Note that these computations also hold as appropriate
for rank 2 subsystems. We leave these computations as a straightforward exercise.
Note that these results are unique only up to a consistent sign change.

\begin{prop}\label{P:rank3} Assume that $\Phi$ is of type $A_3$, $B_3$, or $C_3$.  Let the simple
roots be $\al_1, \al_2, \al_3$ ordered in the natural way.  So $\al_3$ is the short
simple root in type $B_3$ and the long simple root in type $C_3$. Then we have
the following for $d_1: \Lambda^1(\ul^*) \to \Lambda^2(\ul^*)$.
\begin{itemize}
\item[(a)] All types:
\begin{itemize}
\item[$\bullet$] $d_1(\phi_{\al_1 + \al_2}) = \phi_{\al_1}\wedge\phi_{\al_2}$
\item[$\bullet$] $d_1(\phi_{\al_2 + \al_3}) = \phi_{\al_2}\wedge\phi_{\al_3}$
\item[$\bullet$] $d_1(\phi_{\al_1 + \al_2 + \al_3}) = 
	\phi_{\al_1}\wedge\phi_{\al_2 + \al_3} + \phi_{\al_1 + \al_2}\wedge\phi_{\al_3}$
\end{itemize}
\item[(b)] Type $B_3$:
\begin{itemize}
\item[$\bullet$] $d_1(\phi_{\al_1 + 2\al_2 + 2\al_3}) = 
	\phi_{\al_2}\wedge\phi_{\al_1 + \al_2 + 2\al_3} + 
	\phi_{\al_2 + 2\al_3}\wedge\phi_{\al_1 + \al_2} + 
	2\cdot\phi_{\al_2 + \al_3}\wedge\phi_{\al_1 + \al_2 + \al_3}$
\item[$\bullet$] $d_1(\phi_{\al_1 + \al_2 + 2\al_3}) =
	\phi_{\al_1}\wedge\phi_{\al_2 + 2\al_3} + 2\cdot\phi_{\al_3}\wedge\phi_{\al_1 + \al_2 + \al_3}$
\item[$\bullet$] $d_1(\phi_{\al_2 + 2\al_3}) = 
	2\cdot\phi_{\al_3}\wedge\phi_{\al_2 + \al_3}$
\end{itemize}
\item[(c)] Type $C_3$:
\begin{itemize}
\item[$\bullet$] $d_1(\phi_{2\al_1 + 2\al_2 + \al_3}) = 
	2\cdot\phi_{\al_1}\wedge\phi_{\al_1 + 2\al_2 + \al_3} + 
	2\cdot\phi_{\al_1 + \al_2}\wedge\phi_{\al_1 + \al_2 + \al_3}$
\item[$\bullet$] $d_1(\phi_{\al_1 + 2\al_2 + \al_3}) = 
	\phi_{\al_1}\wedge\phi_{2\al_2 + \al_3} + 
	\phi_{\al_2}\wedge\phi_{\al_1 + \al_2 + \al_3} +
	\phi_{\al_1 + \al_2}\wedge\phi_{\al_2 + \al_3}$
\item[$\bullet$] $d_1(\phi_{2\al_2 + \al_3}) = 
	2\cdot\phi_{\al_2}\wedge\phi_{\al_2 + \al_3}$
\end{itemize}
\end{itemize}
\end{prop}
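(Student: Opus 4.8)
The plan is to reduce the entire proposition to the defining formula $(d_1\phi)(x\wedge y) = -\phi([x,y])$ together with the Chevalley commutator relations, so that each value $d_1(\phi_\eta)$ can be read directly off the structure constants of $\ul$. Fix a Chevalley basis $\{x_\alpha : \alpha\in\Phi\}$ with $[x_\alpha,x_\beta]=N_{\alpha,\beta}\,x_{\alpha+\beta}$ when $\alpha+\beta\in\Phi$ and $[x_\alpha,x_\beta]=0$ otherwise. Since $\phi_\eta(x_{-(\alpha+\beta)})=\delta_{\alpha+\beta,\eta}$, pairing $d_1(\phi_\eta)$ against $x_{-\alpha}\wedge x_{-\beta}$ gives $-\phi_\eta([x_{-\alpha},x_{-\beta}])=-N_{-\alpha,-\beta}\,\delta_{\alpha+\beta,\eta}$, and hence
$$
d_1(\phi_\eta) = -\sum_{\substack{\alpha+\beta=\eta\\ \alpha,\beta\in\Phi^+}} N_{-\alpha,-\beta}\,\phi_\alpha\wedge\phi_\beta,
$$
where the sum runs over the unordered decompositions of $\eta$ into two positive roots. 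Thus the proposition becomes a finite enumeration: for each listed $\eta$, I would list its decompositions $\eta=\alpha+\beta$ and compute the corresponding constant $N_{-\alpha,-\beta}$.

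For the constants I would invoke the standard fact that, up to the overall sign ambiguity built into the choice of a Chevalley basis, $|N_{\alpha,\beta}|=r_{\alpha,\beta}+1$, where $r_{\alpha,\beta}$ is the greatest integer $r\geq 0$ with $\beta-r\alpha\in\Phi$. Every decomposition occurring in part (a) has $r_{\alpha,\beta}=0$ in all three types, so each such constant has absolute value $1$; this establishes part (a) uniformly. The coefficient $2$ appearing in parts (b) and (c) occurs exactly at the non-simply-laced decompositions whose root string has length three, i.e. $r_{\alpha,\beta}=1$. For instance, in $B_3$ the unique decomposition of $\al_2+2\al_3$ is $\al_3+(\al_2+\al_3)$, which sits in the $\al_3$-string $\{\al_2,\ \al_2+\al_3,\ \al_2+2\al_3\}$ of length three, forcing $N=\pm 2$ and hence the stated factor. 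The remaining entries of (b) and (c) are handled identically, the multi-term right-hand sides simply collecting all available decompositions; the same method applies verbatim to the rank-two subsystems, as noted in the statement.

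The routine but genuinely error-prone part, which I expect to be the main obstacle, is the bookkeeping: one must enumerate all decompositions of each $\eta$ into two positive roots without omission or duplication (for example, $\al_1+\al_2+\al_3$ decomposes as both $(\al_1+\al_2)+\al_3$ and $\al_1+(\al_2+\al_3)$, and missing either changes the answer), and one must attach the factor $2$ to exactly the right terms in the $B_3$ and $C_3$ cases. By contrast the signs present no real difficulty: rescaling each $x_\alpha$ by $\pm 1$ rescales $\phi_\alpha$ and the constants $N_{-\alpha,-\beta}$ accordingly, so any globally consistent choice of signs is permissible, which is precisely the assertion that the formulas are determined only up to a consistent sign change. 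I would therefore fix one sign convention at the outset, verify that the displayed relations are mutually compatible with it, and otherwise track only the magnitudes of the structure constants.
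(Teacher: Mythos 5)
Your proposal is correct and is exactly the computation the paper has in mind: the paper offers no proof, explicitly leaving these formulas ``as a straightforward exercise,'' and your route --- dualizing $(d_1\phi)(x\wedge y)=-\phi([x,y])$ to get $d_1(\phi_\eta)=-\sum_{\al+\be=\eta}N_{-\al,-\be}\,\phi_\al\wedge\phi_\be$ and reading off $|N_{\al,\be}|=r_{\al,\be}+1$ from root strings --- reproduces every listed term, including the factors of $2$ at precisely the length-three strings in $B_3$ and $C_3$. Your handling of signs also matches the paper's own caveat that the formulas are determined only up to a consistent sign change.
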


Note that $\Phi$ necessarily has rank at least 3 in the following. In particular, type $G_2$ is not under consideration.

\begin{prop}\label{P:dotthree} Assume that $p \geq 3$.  If $p = 3$, assume further that
$\Phi$ is not of type $B_n$, $C_n$, or $F_4$. Suppose $\ga \in X(T)$ is a weight of 
$\opH^3(\ul,k)$ 
with $\ga = i\be_1 + j\be_2 + \be_3$ for distinct $\be_1,\be_2, \be_3 \in \Pi$ and 
$i \geq j > 0$.
Then $\ga = -(s_{\be_1}s_{\be_2}s_{\be_3})\cdot 0 = 
	s_{\be_1}s_{\be_2}(\be_3) + s_{\be_1}(\be_2) + \be_1$.
\end{prop}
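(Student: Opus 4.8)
The plan is to localize the $\ga$-weight space of $\opH^3(\ul,k)$ to the rank-three subsystem cut out by $\be_1,\be_2,\be_3$ and then to read the answer off the explicit differentials of Proposition~\ref{P:rank3}. First I would use that $\ga=i\be_1+j\be_2+\be_3$ is supported only on $\be_1,\be_2,\be_3$: any $\phi_\eta$ appearing in a cochain of weight $\ga$ has $\eta\in\Phi^+$ a nonnegative integral combination of $\be_1,\be_2,\be_3$ alone, and $d_1(\phi_\eta)$ is supported on the same roots, so the weight-$\ga$ subcomplex of $\Lambda^\bullet(\ul^*)$ coincides with that of $\ul'$, the nilradical attached to the subsystem $\Phi'\subseteq\Phi$ generated by $\be_1,\be_2,\be_3$. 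Hence $\opH^3(\ul,k)_\ga\cong\opH^3(\ul',k)_\ga$ and it suffices to argue inside $\Phi'$.

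Since the $\be_i$ are distinct simple roots, the irreducible components of $\Phi'$ lie among $A_1,A_2,B_2,A_3,B_3,C_3$, and a component of type $B_2$, $B_3$, or $C_3$ forces $\Phi$ to be of type $B_n$, $C_n$, or $F_4$. As these are precisely the types excluded at $p=3$, under the hypotheses one has $p\geq h(\Phi_i')-1$ for every irreducible component $\Phi_i'$ of $\Phi'$ (here $p\geq 3$ covers $A_3$, while $B_3,C_3$ occur only once $p\geq 5$). Equivalently, all the structure constants recorded in Proposition~\ref{P:rank3} are nonzero modulo $p$; this is the single point at which the characteristic assumptions enter, and it is the same phenomenon that produces the extra $p=3$ classes $\pi'$ in Theorem~\ref{T:h2}.

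To finish I would apply Corollary~\ref{C:h3first} to constrain the monomials $\phi_{\si_1}\wedge\phi_{\si_2}\wedge\phi_{\si_3}$ that can occur in a class of weight $\ga$: at least one $\si_i$ is simple, the others satisfy the decomposition constraints stated there, and since the $\be_3$-coefficient of $\ga$ equals $1$ exactly one $\si_i$ involves $\be_3$. Feeding the positive-root data of $A_3$, $B_3$, $C_3$ and the formulas of Proposition~\ref{P:rank3} into these constraints leaves only finitely many candidates in each weight; a short computation of $d_2$ and $d_3$ then shows the surviving classes are the standard ones $\phi_{\be_1}\wedge\phi_{s_{\be_1}(\be_2)}\wedge\phi_{s_{\be_1}s_{\be_2}(\be_3)}$ of Corollary~\ref{C:h3wdot0}, whose weight is $s_{\be_1}s_{\be_2}(\be_3)+s_{\be_1}(\be_2)+\be_1=-(s_{\be_1}s_{\be_2}s_{\be_3})\cdot0$ by Proposition~\ref{P:wdotzero}. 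To see that the relevant Weyl element really is $s_{\be_1}s_{\be_2}s_{\be_3}$, note that $-w\cdot0$ determines $w$ uniquely and, having full support $\{\be_1,\be_2,\be_3\}$ and length three in a rank-three group, $w$ must be a Coxeter element; the formula of Proposition~\ref{P:wdotzero} shows that the last letter of a reduced word contributes its simple root with coefficient exactly $1$ while the first two letters contribute only in the span of their roots, and matching against the pattern $i\geq j$ singles out the order $s_{\be_1}s_{\be_2}s_{\be_3}$.

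I expect the genuine work to be the bookkeeping in the connected $B_3$ and $C_3$ cases: one must discard the spurious weights of the form $i\be_1+j\be_2+\be_3$ that happen to be coboundaries (for instance, in type $A_3$ the sole cochain $\phi_{\al_1}\wedge\phi_{\al_2}\wedge\phi_{\al_3}$ of weight $\al_1+\al_2+\al_3$ equals $-d_2(\phi_{\al_1}\wedge\phi_{\al_2+\al_3})$) and verify in each surviving weight that the correct Coxeter element is recovered. The reducible cases ($A_2\times A_1$, $B_2\times A_1$, $A_1^3$) are cleaner: a Künneth decomposition together with Proposition~\ref{P:h1} and Theorem~\ref{T:h2} reduces them to the known rank-$\leq 2$ cohomology, where orthogonality of the commuting factors makes the identification $\ga=-(s_{\be_1}s_{\be_2}s_{\be_3})\cdot0$ immediate.
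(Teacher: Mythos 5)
Your proposal is correct and, at its core, is the same argument as the paper's: enumerate the decompositions of $\ga$ into three distinct positive roots (constrained by Corollary~\ref{C:h3first}), and decide which candidate cocycles survive using the explicit differentials of Proposition~\ref{P:rank3}. What you add is a cleaner front end: the observation that the weight-$\ga$ subcomplex of $\Lambda^\bullet(\ul^*)$ coincides with the weight-$\ga$ subcomplex for the nilradical attached to the rank-three subsystem generated by $\be_1,\be_2,\be_3$, so that everything reduces to $A_1^3$, $A_2\times A_1$, $B_2\times A_1$, $A_3$, $B_3$, $C_3$, with a K\"unneth decomposition disposing of the reducible ones. The paper uses this localization only implicitly and case by case (reducing type $B_n$ to $A_{n-1}$ or to $B_3$, for instance) and instead organizes the work by the coefficient pattern $(i,j)$ of $\ga$; your packaging makes it transparent why the types excluded at $p=3$ are exactly $B_n$, $C_n$, $F_4$. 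Two caveats. First, your parenthetical ``equivalently, all the structure constants of Proposition~\ref{P:rank3} are nonzero mod $p$'' is not the right criterion: those constants are $1$ and $2$, hence units for every $p\geq 3$, and the genuine failure at $p=3$ in types $B_3$, $C_3$ is that certain linear combinations become cocycles that are no longer coboundaries (this is what produces $\pi'$ in Theorem~\ref{T:h2}); the correct justification is $p\geq h(\Phi')-1$ for each component together with the cited Kostant-type results, or direct computation. Second, the ``short computation of $d_2$ and $d_3$'' you defer is in fact the bulk of the paper's proof (Cases I--V and the $B_n$, $C_n$, $F_4$ subcases), so what you have is a correct and well-organized plan rather than a complete argument --- but every step you outline does go through, and your worked example in type $A_3$ matches the paper's Case V.
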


\begin{proof} We proceed with a case-by-case analysis.  By assumption,
$\ga = \si_1 + \si_2 + \si_3$ where $\{\si_i\}$ consists of distinct positive 
roots. Furthermore, for each $i$, $\si_i = a\be_1 + b\be_2 + c\be_3$ for
some $a, b \geq 0$, and $c \in \{0,1\}$.  

\noindent{\bf Types $A_n$, $D_n$, and $E_n$:}
For these types, the coefficients $a, b$ must also lie 
in the set $\{0,1\}$.  The possible sums of three distinct positive roots giving
the desired form are as follows:
\begin{itemize}
\item[I.] $i = 3$, $j = 2$: $\ga = (\be_1 + \be_2 + \be_3) + (\be_1 + \be_2) + \be_1$
\item[II.] $i = 3$, $j = 1$: $\ga = (\be_1 + \be_2)$ + $(\be_1 + \be_3)$ + $\be_1$
\item[III.] $i = 2$, $j = 2$:
\begin{itemize}
\item[(a)] $\ga = (\be_1 + \be_2 + \be_3) + \be_2 + \be_1$
\item[(b)] $\ga = (\be_1 + \be_2) + (\be_1 + \be_3) + \be_2$
\item[(c)] $\ga = (\be_1 + \be_2) + (\be_2 + \be_3) + \be_1$
\end{itemize}
\item[IV.] $i = 2$, $j = 1$:
\begin{itemize}
\item[(a)] $\ga = (\be_1 + \be_2) + \be_3 + \be_1$
\item[(b)] $\ga = (\be_1 + \be_3) + \be_2 + \be_1$
\end{itemize}
\item[V.] $i = 1$, $j = 1$: $\ga = \be_3 + \be_2 + \be_1$
\end{itemize}

{\bf Case I.} Since there is only one way that the weight $\ga$ can arise,
the corresponding cohomology class must be represented by 
$\phi_{\be_1 + \be_2 + \be_3}\wedge\phi_{\be_1 + \be_2}\wedge\phi_{\be_1}$.
In particular, this element must be a cocycle.
From Lemma \ref{L:differential}, Proposition \ref{P:h1}, and Proposition \ref{P:rank3}, we have
\begin{align*}
0 &= d_3(\phi_{\be_1 + \be_2 + \be_3}\wedge\phi_{\be_1 + \be_2}\wedge\phi_{\be_1}) \\
	&= d_1(\phi_{\be_1 + \be_2 + \be_3})\wedge\phi_{\be_1 + \be_2}\wedge\phi_{\be_1}
	- \phi_{\be_1 + \be_2 + \be_3}\wedge d_1(\phi_{\be_1 + \be_2})\wedge\phi_{\be_1}\\
	& \qquad \quad + \phi_{\be_1 + \be_2 + \be_3}\wedge\phi_{\be_1 + \be_2}\wedge d_1(\phi_{\be_1})\\
	&= d_1(\phi_{\be_1 + \be_2 + \be_3})\wedge\phi_{\be_1 + \be_2}\wedge\phi_{\be_1}
		\pm \phi_{\be_1 + \be_2 + \be_3}\wedge\phi_{\be_1}\wedge\phi_{\be_2}\wedge\phi_{\be_1}\\
	&= d_1(\phi_{\be_1 + \be_2 + \be_3})\wedge\phi_{\be_1 + \be_2}\wedge\phi_{\be_1}.
\end{align*}
Since $\be_1 + \be_2$ is a positive root, the roots $\be_1$ and $\be_2$ are adjacent.
Further, since $\be_1 + \be_2 + \be_3$ is a positive root, $\be_3$ is adjacent to 
either $\be_1$ or $\be_2$.  That is, the roots correspond to nodes of the 
Dynkin diagram in one of two ways:
$$
\be_3\leftrightarrow\be_1\leftrightarrow\be_2 \hskip.5in \text{ or } \hskip.5in 
\be_1\leftrightarrow\be_2\leftrightarrow\be_3.
$$
In the first case, by Lemma~\ref{L:differential}, we have
\begin{align*}
d_1(\phi_{\be_1 + \be_2 + \be_3})\wedge\phi_{\be_1 + \be_2}\wedge\phi_{\be_1}
	&= \phi_{\be_3}\wedge\phi_{\be_1 + \be_2}\wedge\phi_{\be_1 + \be_2}\wedge\phi_{\be_1}
		+ \phi_{\be_3 + \be_1}\wedge\phi_{\be_2}\wedge\phi_{\be_1 + \be_2}\wedge\phi_{\be_1}\\
	&=  \phi_{\be_3 + \be_1}\wedge\phi_{\be_2}\wedge\phi_{\be_1 + \be_2}\wedge\phi_{\be_1} \neq 0,
\end{align*}
which is a contradiction.  Hence,  the latter situation must hold.  Then one can readily
check that $3\be_1 + 2\be_2 + \be_3 = -s_{\be_1}s_{\be_2}s_{\be_3}\cdot 0$.

{\bf Case II.} As in Case I, since the weight is unique, it must correspond to the
element $\phi_{\be_1 + \be_2}\wedge\phi_{\be_1 + \be_3}\wedge\phi_{\be_1}$.  Since
$\be_1 + \be_2$ and $\be_1 + \be_3$ are positive roots, $\be_1$ is adjacent to 
both $\be_2$ and $\be_3$. That is, we have $\be_2\leftrightarrow\be_1\leftrightarrow\be_3$.
With that condition, one can check that $3\be_1 + \be_2 + \be_3 = -s_{\be_1}s_{\be_2}s_{\be_3}\cdot 0$.

{\bf Case III.} Here, we potentially have three ways in which the weight $\ga$ 
could arise.  Notice that for (a), $\be_1$, $\be_2$, and $\be_3$ lie in a row in some
order. For (b), we must have $\be_2\leftrightarrow\be_1\leftrightarrow\be_3$, and
for (c) we have $\be_1\leftrightarrow\be_2\leftrightarrow\be_3$. So cases (b)
and (c) cannot occur simultaneously.  There are three scenarios to consider
up to a flip of the Dynkin diagram.

Suppose first that $\be_1\leftrightarrow\be_3\leftrightarrow\be_2$.  Then neither
case (b) or (c) holds and $\ga$ arises uniquely corresponding to 
$\phi_{\be_1 + \be_2 + \be_3}\wedge\phi_{\be_2}\wedge\phi_{\be_1}$.  Moreover,
one can check that $2\be_1 + 2\be_2 + \be_3 = -s_{\be_1}s_{\be_2}s_{\be_3}\cdot 0$.

Suppose next that we have $\be_2\leftrightarrow\be_1\leftrightarrow\be_3$. Then
$\ga$ can arise in two ways - (a) or (b).  Notice however that 
$\ga \neq -s_{\be_1}s_{\be_2}s_{\be_3}\cdot 0$ (indeed, 
$-s_{\be_1}s_{\be_2}s_{\be_3}\cdot 0$ is the weight arising in Case II). 
So we need to show that in fact there is no cohomology class having weight $\ga$.
If there was, $\ga$ would have to correspond to a linear
combination $a\phi_{\be_1 + \be_2 + \be_3}\wedge\phi_{\be_2}\wedge\phi_{\be_1}
+ b\phi_{\be_1 + \be_2}\wedge\phi_{\be_1 + \be_3}\wedge\phi_{\be_2}$. Then we have
(rewriting to match the ordering of the roots)
\begin{align*}
d_3(a\phi_{\be_2 + \be_1 + \be_3}&\wedge\phi_{\be_2}\wedge\phi_{\be_1}
+ b\phi_{\be_2 + \be_1}\wedge\phi_{\be_1 + \be_3}\wedge\phi_{\be_2}) \\
	&= a\phi_{\be_2}\wedge\phi_{\be_1 + \be_3}\wedge\phi_{\be_2}\wedge\phi_{\be_1} + 
		a\phi_{\be_2 + \be_1}\wedge\phi_{\be_3}\wedge\phi_{\be_2}\wedge\phi_{\be_1}\\
	&\quad + b\phi_{\be_2}\wedge\phi_{\be_1}\wedge\phi_{\be_1+\be_3}\wedge\phi_{\be_2} - 
		b\phi_{\be_2 + \be_1}\wedge\phi_{\be_1}\wedge\phi_{\be_3}\wedge\phi_{\be_2}\\
	&= a\phi_{\be_2 + \be_1}\wedge\phi_{\be_3}\wedge\phi_{\be_2}\wedge\phi_{\be_1} -
		b\phi_{\be_2 + \be_1}\wedge\phi_{\be_1}\wedge\phi_{\be_3}\wedge\phi_{\be_2}\\
	& = (a - b)\phi_{\be_2 + \be_1}\wedge\phi_{\be_3}\wedge\phi_{\be_2}\wedge\phi_{\be_1}.
\end{align*}
So this is a cocyle if and only if $a = b$.  That is the potential 
cohomology class would be represented by
$\phi_{\be_2 + \be_1 + \be_3}\wedge\phi_{\be_2}\wedge\phi_{\be_1}
+ \phi_{\be_2 + \be_1}\wedge\phi_{\be_1 + \be_3}\wedge\phi_{\be_2}$.
Notice however that
\begin{align*}
d_2(\phi_{\be_2 + \be_1 + \be_3}&\wedge\phi_{\be_2 + \be_1}) \\
	&= \phi_{\be_2}\wedge\phi_{\be_1 + \be_3}\wedge\phi_{\be_2 + \be_1}
		+ \phi_{\be_2 + \be_1}\wedge\phi_{\be_3}\wedge\phi_{\be_2 + \be_1}
			- \phi_{\be_2 + \be_1 + \be_3}\wedge\phi_{\be_2}\wedge\phi_{\be_1}\\
	& = \phi_{\be_2}\wedge\phi_{\be_1 + \be_3}\wedge\phi_{\be_2 + \be_1}
		- \phi_{\be_1 + \be_2 + \be_3}\wedge\phi_{\be_2}\wedge\phi_{\be_1}\\
	& = - \phi_{\be_1 + \be_2 + \be_3}\wedge\phi_{\be_2}\wedge\phi_{\be_1} 
		- \phi_{\be_2 + \be_1}\wedge\phi_{\be_1 + \be_3}\wedge\phi_{\be_2}.
\end{align*}
And so the above class is a coboundary. Hence, there is no cohomology class
of weight $\ga$ as claimed.  

Suppose finally that we have $\be_1\leftrightarrow\be_2\leftrightarrow\be_3$. 
Then $\ga$ can arise in two ways - (a) or (c).  As in the preceding case,
$\ga \neq -s_{\be_1}s_{\be_2}s_{\be_3}\cdot 0$ (indeed, 
$-s_{\be_1}s_{\be_2}s_{\be_3}\cdot 0$ is the weight arising in Case I). 
So we need to show that in fact there is no cohomology class having weight $\ga$.
If there was, $\ga$ would have to correspond to a linear
combination $a\phi_{\be_1 + \be_2 + \be_3}\wedge\phi_{\be_2}\wedge\phi_{\be_1}
+ b\phi_{\be_1 + \be_2}\wedge\phi_{\be_2 + \be_3}\wedge\phi_{\be_1}$. Then we have
(rewriting to match the ordering of the roots)
\begin{align*}
d_3(a\phi_{\be_1 + \be_2 + \be_3}&\wedge\phi_{\be_1}\wedge\phi_{\be_2}
+ b\phi_{\be_1 + \be_2}\wedge\phi_{\be_2 + \be_3}\wedge\phi_{\be_1}) \\
	&= a\phi_{\be_1}\wedge\phi_{\be_2 + \be_3}\wedge\phi_{\be_1}\wedge\phi_{\be_2} + 
		a\phi_{\be_1 + \be_2}\wedge\phi_{\be_3}\wedge\phi_{\be_1}\wedge\phi_{\be_2}\\
	&\quad + b\phi_{\be_1}\wedge\phi_{\be_2}\wedge\phi_{\be_2+\be_3}\wedge\phi_{\be_1} - 
		b\phi_{\be_1 + \be_2}\wedge\phi_{\be_2}\wedge\phi_{\be_3}\wedge\phi_{\be_1}\\
	&= a\phi_{\be_1 + \be_2}\wedge\phi_{\be_3}\wedge\phi_{\be_1}\wedge\phi_{\be_2} -
		b\phi_{\be_1 + \be_2}\wedge\phi_{\be_2}\wedge\phi_{\be_3}\wedge\phi_{\be_1}\\
	& = (a - b)\phi_{\be_1 + \be_2}\wedge\phi_{\be_1}\wedge\phi_{\be_2}\wedge\phi_{\be_3}.
\end{align*}

So this is a cocyle if and only if $a = b$.  That is the potential 
cohomology class would be represented by
$\phi_{\be_1 + \be_2 + \be_3}\wedge\phi_{\be_1}\wedge\phi_{\be_2}
+ \phi_{\be_1 + \be_2}\wedge\phi_{\be_2 + \be_3}\wedge\phi_{\be_1}$.
Notice however that
\begin{align*}
d_2(\phi_{\be_1 + \be_2 + \be_3}&\wedge\phi_{\be_1 + \be_2}) \\
	&= \phi_{\be_1}\wedge\phi_{\be_2 + \be_3}\wedge\phi_{\be_1 + \be_2}
		+ \phi_{\be_1 + \be_2}\wedge\phi_{\be_3}\wedge\phi_{\be_1 + \be_2}
			- \phi_{\be_1 + \be_2 + \be_3}\wedge\phi_{\be_1}\wedge\phi_{\be_2}\\
	& = \phi_{\be_1}\wedge\phi_{\be_2 + \be_3}\wedge\phi_{\be_1 + \be_2}
		- \phi_{\be_1 + \be_2 + \be_3}\wedge\phi_{\be_1}\wedge\phi_{\be_2}\\
	& = - \phi_{\be_1 + \be_2 + \be_3}\wedge\phi_{\be_1}\wedge\phi_{\be_2} 
		- \phi_{\be_1 + \be_2}\wedge\phi_{\be_2 + \be_3}\wedge\phi_{\be_1}.
\end{align*}
And so the above class is a coboundary. Hence, there is no cohomology class
of weight $\ga$ as claimed.

{\bf Case IV.} Notice that in case (a) $\be_1$ is adjacent to $\be_2$, and
in case (b) $\be_1$ is adjacent to $\be_3$.  Notice also that both
$\phi_{\be_1 + \be_2}\wedge\phi_{\be_1}\wedge\phi_{\be_3}$ and
$\phi_{\be_1 + \be_3}\wedge\phi_{\be_1}\wedge\phi_{\be_2}$ are cocycles.  

Suppose first that $\be_1$ is adjacent to both $\be_2$ and $\be_3$. Then we have
$\be_2\leftrightarrow\be_1\leftrightarrow\be_3$ and 
$\ga \neq -s_{\be_1}s_{\be_2}s_{\be_3}\cdot 0$. That would be Case II. 
So we want to show that there is no cohomology class having weight $\ga$.
Indeed, observe that
\begin{align*}
d_2(&\phi_{\be_2 + \be_1 + \be_3}\wedge\phi_{\be_1} + 
	\phi_{\be_2 + \be_1}\wedge\phi_{\be_1 + \be_3})\\
	&= \phi_{\be_2}\wedge\phi_{\be_1 + \be_3}\wedge\phi_{\be_1}
		+ \phi_{\be_2 + \be_1}\wedge\phi_{\be_3}\wedge\phi_{\be_1}
		+ \phi_{\be_2}\wedge\phi_{\be_1}\wedge\phi_{\be_1 + \be_3}
		- \phi_{\be_2 + \be_1}\wedge\phi_{\be_1}\wedge\phi_{\be_3}\\
	&= - 2\phi_{\be_2 + \be_1}\wedge\phi_{\be_1}\wedge\phi_{\be_3}.
\end{align*}
Hence, $\phi_{\be_2 + \be_1}\wedge\phi_{\be_1}\wedge\phi_{\be_3}$ is a coboundary.
Similarly, 
$$
d_2(\phi_{\be_2 + \be_1 + \be_3}\wedge\phi_{\be_1} 
	- \phi_{\be_2 + \be_1}\wedge\phi_{\be_1 + \be_3})
		= -2\phi_{\be_2 + \be_1}\wedge\phi_{\be_1}\wedge\phi_{\be_3}.
$$
And so $\phi_{\be_2 + \be_1}\wedge\phi_{\be_1}\wedge\phi_{\be_3}$ is also a coboundary.
Hence, $\ga$ cannot be the weight of a cohomology class.

Next assume that $\be_1$ is adjacent to $\be_2$ but not adjacent to $\be_3$.
Then we need only consider case (a) in which $\ga$ must correspond to 
$\phi_{\be_1 + \be_2}\wedge\phi_{\be_1}\wedge\phi_{\be_3}$.  There are still
two cases to consider depending upon whether $\be_3$ is adjacent to $\be_2$.
If it is, then we have $\be_1\leftrightarrow\be_2\leftrightarrow\be_3$ and
$\ga \neq -s_{\be_1}s_{\be_2}s_{\be_3}\cdot 0$.  This should be Case I. 
However, we have
\begin{align*}
d_2(\phi_{\be_1 + \be_2 + \be_3}\wedge\phi_{\be_1}) 
	&= \phi_{\be_1}\wedge\phi_{\be_2 + \be_3}\wedge\phi_{\be_1} 
		+ \phi_{\be_1 + \be_2}\wedge\phi_{\be_3}\wedge\phi_{\be_1}\\
	&= -\phi_{\be_1 + \be_2}\wedge\phi_{\be_1}\wedge\phi_{\be_3}.
\end{align*}
And so $\phi_{\be_1 + \be_2}\wedge\phi_{\be_1}\wedge\phi_{\be_3}$ is indeed
a coboundary. On the other hand, if $\be_3$ is not adjacent to $\be_2$ (nor $\be_1$),
then one can check that $2\be_1 + \be_2 + \be_3 = -s_{\be_1}s_{\be_2}s_{\be_3}\cdot 0$.

Finally, assume that $\be_1$ is adjacent to $\be_3$ but not to $\be_2$.  Then
we only need to consider case (b) in which $\ga$ must correspond to 
$\phi_{\be_1  + \be_3}\wedge\phi_{\be_1}\wedge\phi_{\be_2}$.  Similar to the 
preceding case, if $\be_2$ is adjacent to $\be_3$ (i.e., we have 
$\be_1\leftrightarrow\be_3\leftrightarrow\be_2$), one sees that 
$\phi_{\be_1  + \be_3}\wedge\phi_{\be_1}\wedge\phi_{\be_2}$ is a coboundary.
On the other hand, if $\be_2$ is not adjacent to $\be_3$ (nor $\be_1$), 
then $2\be_1 + \be_2 + \be_3 = -s_{\be_1}s_{\be_2}s_{\be_3}\cdot 0$.

{\bf Case V.} Here $\ga$ must correspond to 
$\phi_{\be_1}\wedge\phi_{\be_2}\wedge\phi_{\be_3}$ which is evidently a cocycle.
Observe that if any of the roots $\be_1$, $\be_2$, and $\be_3$ are adjacent, then
this class is a coboundary. For example, suppose $\be_1$ and $\be_2$ are 
adjacent. Then
$$
d_2(\phi_{\be_1 + \be_2}\wedge\phi_{\be_3}) = \phi_{\be_1}\wedge\phi_{\be_2}\wedge\phi_{\be_3}.
$$
The other cases are similar.  Hence, for this to represent a cohomology class, the three
roots must be completely disjoint.  Under that condition, one indeed has
$\be_1 + \be_2 + \be_3 = -s_{\be_1}s_{\be_2}s_{\be_3}\cdot 0$ as claimed.

\medskip\noindent
{\bf Type $B_n$:} If the none of the roots $\be_1$, $\be_2$, or $\be_3$
is equal to the short root $\al_n$, then the roots lie in the natural
root subsystem of type $A_{n-1}$ and the result follows from above.  Next,
if $\be_i \in \{\al_{n-2},\al_{n-1},\al_{n}\}$ for each $i$, then the
problem reduces to a Lie subalgebra of type $B_3$. For type $B_3$, the 
Lie algebra cohomology can be computed directly by hand (with a fair 
amount of work) or using computer programs and the software MAGMA \cite{BC,BCP}.
Such programs have been written by students at the University of Wisconsin-Stout
and by the VIGRE Algebra Group at the University of Georgia. Alternatively, since
$h = 6$, one can apply the general theory of \cite{FP2} \cite{PT} \cite{UGA}. One finds that
for $p \geq 5$, each cohomology class in $\opH^3(\ul,k)$ has weight
$w\cdot 0$ for some $w \in W$ with $l(w) = 3$.  Hence, the result follows.
Note that for $p = 3$, there do exist three ``extra'' cohomology classes,
two of which indeed have weight as given in the lemma. 

It remains to consider the case when one of the $\be_i$s is $\al_n$ and
another is $\al_j$ for some $j < n-2$.  Recall that the weight must arise
as a sum $\si_1 + \si_2 + \si_3$ of distinct positive roots.  We have the
following possibilities:
\begin{itemize}
\item[(i)] $\al_j + (\al_{n-1} + 2\al_n) + (\al_{n-1} + \al_n)$ for some $1 \leq j \leq n - 3$
\item[(ii)] $\al_j + (\al_{n-1} + 2\al_n) + \al_n$ for some $1 \leq j \leq n - 3$
\item[(iii)] $\al_j + (\al_{n-1} + 2\al_n) + \al_{n-1}$ for some $1 \leq j \leq n - 3$
\item[(iv)] $\al_j + (\al_{n-1} + \al_n) + \al_n$ for some $1 \leq j \leq n - 3$
\item[(v)] $\al_j + (\al_{n-1} + \al_n) + \al_{n-1}$ for some $1 \leq j \leq n - 3$
\item[(vi)] $\al_j + \al_{n-1} + \al_n$ for some $1 \leq j \leq n - 3$
\item[(vii)] $\al_j + \al_{n-2} + \al_n$ for some $1 \leq j \leq n - 4$
\item[(viii)] $\al_{n-3} + \al_{n-2} + \al_n$ 
\item[(ix)] $\si_1 + \si_2 + \al_n$ where $\si_1$ and $\si_2$ together involve precisely two
simple roots $\al_j$ with $1 \leq j \leq n - 3$.
\end{itemize}

For the first eight cases, the given weight arises uniquely as a sum of three distinct
positive roots.  Hence, the corresponding cohomology class would have to be 
$\phi_{\si_1}\wedge\phi_{\si_2}\wedge\phi_{\si_3}$.  In case (i), the corresponding element is not a cocyle.
Indeed, $\phi_{\al_j}\wedge\phi_{\al_{n-1} + 2\al_n}\wedge\phi_{\al_{n-1}+\al_n} 
\mapsto \phi_{\al_j}\wedge\phi_{\al_{n-1}+ 2\al_n}\wedge\phi_{\al_{n-1}}\wedge\phi_{\al_n}$.
In case (ii), the weight equals $-s_{\al_n}s_{\al_{n-1}}s_{\al_j}\cdot 0$ as needed.
In case (iii), the corresponding element is not a cocycle. Indeed,
$\phi_{\al_j}\wedge\phi_{\al_{n-1} + 2\al_n}\wedge\phi_{\al_{n-1}} 
\mapsto -2\phi_{\al_j}\wedge\phi_{\al_n}\wedge\phi_{\al_{n-1}+ \al_n}\wedge\phi_{\al_{n-1}}$.
In case (iv), the corresponding element is a cocycle but is also a coboundary.
Indeed,
$\phi_{\al_j}\wedge\phi_{\al_{n-1} + 2\al_n}\mapsto  2\phi_{\al_j}\phi_{\al_{n-1} + \al_n}\wedge\phi_{\al_n}$.
In case (v), the weight equals $-s_{\al_{n-1}}s_{\al_n}s_{\al_j}\cdot 0$ as needed.
In case (vi), the corresponding element is a cocycle but is also a coboundary.
Indeed,
$\phi_{\al_j}\wedge\phi_{\al_{n-1}+ \al_n} \mapsto - \phi_{\al_j}\wedge\phi_{\al_{n-1}}\wedge\phi_{\al_n}$.
In case (vii), the weight equals $-s_{\al_n}s_{\al_{n-2}}s_{\al_j}\cdot 0$ as needed.
In case (viii), the corresponding element is a cocyle but is also a cobouncary.
Indeed,
$\phi_{\al_{n-3} + \al_{n-2}}\wedge\phi_{\al_n} \mapsto \phi_{\al_{n-3}}\wedge\phi_{\al_{n-2}}\wedge\phi_{\al_n}$.

For case (ix), there are several subcases to consider:
\begin{itemize}
\item[(a)] $\si_1 = \al_j$ and $\si_2 = \al_{j + 1}$ with $1 \leq j \leq n-4$
\item[(b)] $\si_1 = \al_i$ and $\si_2 = \al_j$ with $1 \leq i < j - 1 \leq n - 4$
\item[(c)] $\si_1 = \al_j$ and $\si_2 = \al_j + \al_{j + 1}$ with $1 \leq j \leq n - 4$
\item[(d)] $\si_1 = \al_j$ and $\si_2 = \al_{j-1} + \al_j$ with $2 \leq j \leq n - 3$.
\end{itemize}
Again, these weights arise uniquely and all correpsond to cocycles.  However, in case (a),
the corresponding element is also a coboundary. Indeed,
$\phi_{\al_j + \al_{j+1}}\wedge\phi_{\al_n} \mapsto \phi_{\al_j}\wedge\phi_{\al_{j+1}}\wedge\phi_{\al_n}$.
In the latter three cases, the weight equals $-w\cdot 0$ as needed.  In order, the words are
$s_{\al_i}s_{\al_j}s_{\al_n}$, $s_{\al_j}s_{\al_{j + 1}}s_{\al_n}$, and $s_{\al_j}s_{\al_{j-1}}s_{\al_n}$.
Thus the claim holds for type $B_n$.

\medskip

\noindent
{\bf Type $C_n$:} For type $C_n$ one can argue similarly to type $B_n$.

\medskip\noindent
{\bf Type $F_4$:} Note that in this case $\opH^3(\ul,k)$ could be computed 
with the aid of a computer.  We present the general argument anyhow. 
If the simple roots $\{\be_1,\be_2,\be_3\}$ form a root subsystem of 
type $B_3$ or $C_3$, we are done by above.  That leaves the case of type $A_2 \times A_1$.
The possible root sums are
\begin{itemize}
\item[(i)] $\al_1 + \al_3 + \al_4$
\item[(ii)] $\al_1 + (\al_3 + \al_4) + \al_4$
\item[(iii)] $\al_1 + \al_3 + (\al_3 + \al_4)$
\item[(iv)] $\al_1 + \al_2 + \al_4$
\item[(v)] $\al_1 + (\al_1 + \al_2) + \al_4$
\item[(vi)] $(\al_1 + \al_2) + \al_2 + \al_4$
\end{itemize}
Note that all of these weights arise uniquely.  Cases (ii), (iii), (v), and (vi) all have
the desired form $-w\cdot 0$.  Specifically, in order, they are
$-s_1s_4s_3\cdot 0$, $-s_1s_3s_4\cdot 0$, $-s_1s_2s_4\cdot 0$, and $-s_2s_1s_4\cdot 0$.
In the remaining two cases, the corresponding elements are cocycles but also coboundaries.
Indeed, we have 
$\phi_{\al_1}\wedge\phi_{\al_3 + \al_4} \mapsto - \phi_{\al_1}\wedge\phi_{\al_3}\wedge\phi_{\al_4}$
and
$\phi_{\al_1 + \al_2}\wedge \phi_{\al_4} \mapsto \phi_{\al_1}\wedge\phi_{\al_2}\wedge\phi_{\al_4}$.
\end{proof}

\begin{prop}\label{P:dottwo} Assume that $p \geq 3$.  Assume that
$\Phi$ is not of type $G_2$. Suppose $\ga \in X(T)$ is a weight of 
$\opH^3(\ul,k)$ 
with $\ga = i\be_1 + j\be_2$ for $\be_1,\be_2 \in \Pi$ and 
$i \geq j > 0$.
Then $\ga = -(s_{\be_1}s_{\be_2}s_{\be_1})\cdot 0 = 
	s_{\be_1}s_{\be_2}(\be_1) + s_{\be_1}(\be_2) + \be_1$.
\end{prop}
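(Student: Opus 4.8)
The plan is to mimic the case-by-case analysis used in Proposition \ref{P:dotthree}, but now for weights $\ga = i\be_1 + j\be_2$ supported on only two simple roots. The key structural fact from Corollary \ref{C:h3wdot0} and its surrounding discussion is that a length-three Weyl element producing such a weight must be of the form $w = s_{\be_1}s_{\be_2}s_{\be_1}$ (the only way to have a repeated simple reflection in a reduced length-three word), forcing $\be_1$ and $\be_2$ to be adjacent and $i \geq j > 0$. So the work reduces to understanding cohomology classes whose weight is concentrated on a rank-two subsystem. Since $\Phi$ is not of type $G_2$, the relevant rank-two subsystems are of type $A_2$ or type $B_2 = C_2$, and I would handle these two families separately, using the rank-three differential computations of Proposition \ref{P:rank3} restricted to the appropriate rank-two subsystems (as the remark preceding that proposition explicitly permits).

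First I would enumerate, for each rank-two type, all ways to write $\ga = i\be_1 + j\be_2$ as a sum $\si_1 + \si_2 + \si_3$ of three distinct positive roots, exactly as Cases I--V were enumerated in the type $A_n$ portion of Proposition \ref{P:dotthree}. By Corollary \ref{C:h3first}(a), any cohomology class in reduced form must have at least one simple root among its wedge factors, which sharply limits the admissible triples. For type $A_2$ the available positive roots are $\be_1, \be_2, \be_1 + \be_2$, so the only weights of the form $i\be_1 + j\be_2$ expressible as three distinct positive roots with a simple factor and $i \geq j$ are quite few; for type $B_2$ one has in addition the roots with coefficient $2$, giving more possibilities. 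For each admissible triple I would compute $d_3$ of the corresponding wedge (using Lemma \ref{L:differential} and Proposition \ref{P:rank3}) to test whether it is a cocycle, and when it is, compute an explicit $d_2$-preimage to test whether it is a coboundary.

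The expected outcome in each case mirrors Proposition \ref{P:dotthree}: the unique weight equal to $-(s_{\be_1}s_{\be_2}s_{\be_1})\cdot 0$ survives (it is represented by the class from Corollary \ref{C:h3wdot0}, namely $\phi_{\be_1}\wedge\phi_{s_{\be_1}(\be_2)}\wedge\phi_{s_{\be_1}s_{\be_2}(\be_3)}$ with $\be_3 = \be_1$), while every competing weight either yields a non-cocycle or yields a cocycle that is visibly a coboundary via an explicit $d_2$ of a length-two wedge. The main subtlety, and the step I expect to be the real obstacle, is the type $B_2$ analysis: here some weights admit more than one expression as a sum of three distinct positive roots, so I cannot immediately identify a single representative. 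As in Case III of Proposition \ref{P:dotthree}, I would instead consider the general linear combination $a\,(\text{triple}_1) + b\,(\text{triple}_2)$, impose the cocycle condition to pin down the ratio $a:b$, and then exhibit a suitable $d_2$-preimage showing the resulting class is a coboundary. A secondary concern is the characteristic hypothesis $p \geq 3$: the coefficient $2$ appearing in the type $B_2 = C_2$ differentials (see Proposition \ref{P:rank3}(b),(c)) must be invertible, which is exactly why $p \geq 3$ is assumed and why the factors of $2$ in the cocycle and coboundary computations do not vanish. Once both rank-two cases are dispatched, the claim that any such weight $\ga$ must equal $-(s_{\be_1}s_{\be_2}s_{\be_1})\cdot 0$ follows, completing the proof.
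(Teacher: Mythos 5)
Your plan is essentially the paper's proof: reduce to the rank-two subsystem spanned by $\be_1,\be_2$ (type $A_2$ or $B_2=C_2$ since $G_2$ is excluded), enumerate the expressions of $\ga$ as a sum of three distinct positive roots, and show each candidate is either the desired $-s_{\be_1}s_{\be_2}s_{\be_1}\cdot 0$ or a coboundary, with $p\geq 3$ needed exactly for the factors of $2$ in the $B_2$ differentials. The one complication you anticipate --- weights admitting multiple decompositions and hence requiring linear combinations --- does not in fact arise, as each of the four $B_2$ weights (and the single $A_2$ weight) arises uniquely, so the argument closes without the Case III-style analysis.
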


\begin{proof} As in the proof of the preceding lemma, the weight $\ga$ must equal the sum 
of three distinct positive roots.  For types $A_n$, $D_n$, and $E_n$, the only way such a
weight can arise is if $\ga = \al_i + \al_{i+1} + (\al_i + \al_{i+1}) = 
-s_{\al_i}s_{\al_{i+1}}s_{\al_i}\cdot 0 = - s_{\al_{i+1}}s_{\al_{i}}s_{\al_{i+1}}\cdot 0$
as claimed.  In types $B_n$, $C_n$, and $F_4$, we have additional cases to consider when
working within a type $B_2$ (or equivalently $C_2$) root system.  For type $B_2$, there 
are four cases:
\begin{itemize}
\item[(i)] $\al_1 + \al_2 + (\al_1 + \al_2)$
\item[(ii)] $\al_1 + \al_2 + (\al_1 + 2\al_2)$
\item[(iii)] $\al_1 + (\al_+ + \al_2) + (\al_1 + 2\al_2)$
\item[(iv)] $\al_2 + (\al_1 + \al_2) + (\al_1 + 2\al_2)$.
\end{itemize}
Each of these weights arise uniquely.  In the first two cases, the corresponding elements
are coboundaries.  Indeed, we have
$\phi_{\al_1}\wedge\phi_{\al_1 + 2\al_2} \mapsto -2\phi_{\al_1}\wedge\phi_{\al_2}\wedge\phi_{\al_1 + \al_2}$
and
$\phi_{\al_1 + \al_2}\wedge\phi_{\al_1 + 2\al_2} \mapsto \phi_{\al_1}\wedge\phi_{\al_2}\wedge\phi_{\al_1+2\al_2}$.
The latter two weights are of the desired form: $-s_1s_2s_1\cdot 0$ and $-s_2s_1s_2\cdot 0$ respectively.
\end{proof} 

\begin{rem} In the case that $\Phi$ is of type $G_2$, all cohomology groups $\opH^i(\ul,k)$ may be computed by hand.  
For $p \geq 5$, $\opH^3(\ul,k)$ is two dimensional with weights corresponding to $-w\cdot 0$ for the two elements of 
length three.  However, for $p = 3$, there are six ``extra'' cohomology classes whose weights are not of the form
$-w\cdot 0$ for $\ell(w) = 3$.
\end{rem}


\section{Relationship between ordinary and restricted cohomology}


\subsection{} In this section, we investigate a spectral sequence relating $\opH^3(\ul,k)$ to $\opH^3(U_1,k)$.   In Section 4, this will be further related to $B_1$-cohomology, from which we will be able to make a precise determination of $\opH^3(\ul,k)$ for primes
that are not too small. See Theorem \ref{T:u-coho}.

By Corollary \ref{C:h3first}, we know that a weight $\ga$ of $\opH^3(\ul,k)$ can be expressed as the sum of three distinct positive roots, at least one of which is simple.  In studying these relationships between $\ul$-, $U_1$-, and $B_1$-cohomology, we make use of the fact that such root sums cannot take certain forms.   This will be the focus of the next two subsections.


\subsection{}{\bf Root sums I:} The following result shows that weights of $\opH^3(\ul,k)$ cannot lie in $pX(T)$ for $p \geq 5$.

\begin{prop}\label{P:d4rootsum} Assume $p \geq 5$. Let $\al, \si_1, \si_2 \in \Phi^+$ be distinct positive roots with $\al \in \Pi$. Then 
$\al + \si_1 + \si_2 \notin pX(T)$.
\end{prop}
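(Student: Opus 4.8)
The plan is to assume for contradiction that $\al + \si_1 + \si_2 = p\mu$ for some $\mu \in X(T)$, and derive a contradiction by bounding heights and coefficients. Since $\al$ is simple and $\si_1, \si_2$ are positive roots, the sum $\al + \si_1 + \si_2$ lies in the root lattice $\mathbb{Z}\Phi$. Writing everything in terms of the simple root basis, $\al + \si_1 + \si_2 = \sum_{i} c_i \al_i$ where each $c_i$ is the sum of the coefficients of $\al_i$ in $\al$, $\si_1$, and $\si_2$. For this to lie in $pX(T)$, and in particular in $p$ times the root lattice, we would need every $c_i \equiv 0 \pmod p$. The key point is that the coefficients $c_i$ cannot be too large: each individual positive root has bounded coefficients, so their threefold sum is tightly constrained, and for $p \geq 5$ no nonzero coefficient can be a positive multiple of $p$ unless the total height is forced to be implausibly large.

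First I would reduce to understanding the maximal possible coefficient $c_i$ of any simple root in a single positive root $\si$. For the classical types $A_n$, $B_n$, $C_n$, $D_n$ and for $E_n$, $F_4$, the coefficient of any simple root in the highest root is at most $6$ (occurring only for $E_8$), and for most simple roots and most types the coefficient is considerably smaller. Summing three positive roots, the coefficient $c_i$ is at most three times the maximal coefficient appearing in a single root. The heart of the argument is then a finite check: for $p \geq 5$, if some $c_i$ is a nonzero multiple of $p$ (so $c_i \geq 5$), this forces all three roots $\al, \si_1, \si_2$ to have large coefficient of $\al_i$, which together with the requirement that \emph{every} coordinate be divisible by $p$ produces an impossible configuration of positive roots.

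The cleanest way to organize this is to note that $\mu \neq 0$ (since $\al + \si_1 + \si_2$ is a nonzero sum of positive roots, hence a nonzero positive element of the root lattice), so $p\mu$ has height at least $p$ times the minimal positive height, forcing $\operatorname{ht}(\al + \si_1 + \si_2) \geq p \cdot \operatorname{ht}(\mu)$. Meanwhile $\operatorname{ht}(\al + \si_1 + \si_2) = \operatorname{ht}(\al) + \operatorname{ht}(\si_1) + \operatorname{ht}(\si_2)$ is bounded above by $3 \cdot \operatorname{ht}(\al_0)$ where $\al_0$ is the highest root. Since $\mu$ being a nonzero element of $pX(T) \cap \mathbb{Z}\Phi$ with all nonnegative coordinates forces $\operatorname{ht}(p\mu) \geq p$, we get $p \leq 3\,\operatorname{ht}(\al_0)$, which rules out all but finitely many small primes relative to the Coxeter number; combined with $p \geq 5$ this eliminates the low-rank cases and leaves only a bounded finite list of type/weight combinations to inspect directly.

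The main obstacle will be handling the types with large Coxeter number, especially $E_7$ and $E_8$, where $3\,\operatorname{ht}(\al_0)$ is large enough that the crude height bound alone does not immediately exclude $p = 5$ or $p = 7$. For those cases I expect to need the finer structural observation that, to have a coordinate equal to $p$, two of the three roots must each contribute a large coefficient of the same simple root, and then the constraint that \emph{all} coordinates of $\al + \si_1 + \si_2$ be divisible by $p$ becomes incompatible with the actual shape of positive roots (their coefficient vectors are far from being proportional to a single lattice vector scaled by $p$). Concretely, since $\al$ is simple, its coordinate vector is a standard basis vector, so modulo $p$ we would need $\si_1 + \si_2 \equiv -\al \pmod{p}$, i.e., the two positive roots $\si_1, \si_2$ would have to sum to something congruent to a single simple root modulo $p$; checking that this cannot happen for two positive roots whose coordinatewise sum is a genuine divisible-by-$p$ vector is the essential finite verification, which I would carry out type-by-type using the explicit lists of positive roots.
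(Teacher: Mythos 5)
There is a genuine gap at the very first reduction. You assert that since $\al+\si_1+\si_2$ lies in $\mathbb{Z}\Phi\cap pX(T)$, it must lie in $p\mathbb{Z}\Phi$, so that every simple-root coordinate is divisible by $p$. This implication holds only when $p$ is coprime to the index $[X(T):\mathbb{Z}\Phi]$, and it fails precisely in type $A_n$ when $p\mid n+1$ --- a case allowed by the hypothesis $p\geq 5$ (e.g.\ $A_4$ with $p=5$, where $5\omega_4=\al_1+2\al_2+3\al_3+4\al_4$ lies in $\mathbb{Z}\Phi\cap 5X(T)$ but has no coordinate divisible by $5$). The same defect undermines your ``cleanest'' height argument: the inequality $\operatorname{ht}(p\mu)\geq p$ presupposes $\operatorname{ht}(\mu)\geq 1$, which again requires $\mu$ to lie in the (positive) root lattice; for $\mu\in X(T)\setminus\mathbb{Z}\Phi$ the height can be a fraction less than $1$. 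So as written your argument does not cover all cases of the statement, even though the conclusion happens to remain true there.

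The paper sidesteps this entirely by expanding $\ga=\al+\si_1+\si_2$ in the \emph{fundamental weight} basis rather than the simple-root basis: for a sum of three distinct positive roots, one of them simple, each coefficient $c_i$ in $\ga=\sum c_i\omega_i$ satisfies $-3\leq c_i\leq 4$ in types $A_n,D_n,E_n$ (and at worst $-6\leq c_i\leq 6$ in the other types), whereas $p\nu$ with $\nu\neq 0$ must have some coefficient a nonzero multiple of $p$. This kills every type for $p\geq 7$ and the simply-laced types for $p=5$, with no assumption that $\nu$ lies in the root lattice, and in particular it handles your problematic $A_n$ case and disposes of $E_7,E_8$ in one line (where your bound $p\leq 3\operatorname{ht}(\al_0)$ would leave many primes to check). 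Only for $B_n,C_n,F_4,G_2$ at $p=5$ --- where the index is $1$ or $2$, so $\nu$ genuinely does lie in the root lattice --- does the paper pass to the simple-root coordinates and run the kind of ``a coefficient equal to $5$ forces a neighboring coefficient between $2$ and $4$'' argument you sketch. If you want to salvage your route, you should either prove the statement separately for $A_n$ with $p\mid n+1$, or better, adopt the fundamental-weight coefficient bound as the first step.
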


\begin{proof} Set $\ga := \al + \si_1 + \si_2$ and suppose that $\ga = p\nu$ for some $\nu \in X(T)$.  Clearly $\nu \neq 0$.
Express $\ga$ as
a sum of fundamental dominant weights: $\ga = \sum c_i\omega_i$ for integers $c_i$.   By direct calculation, one finds the 
following constraints on the $c_i$ for any $i$.  

\vskip.1in
\begin{center} 
\begin{tabular}{|c|c|c|c|c|}
\hline
$\Phi$ & $A_n, D_n, E_n$ & $B_n$, $F_4$ & $C_n$ & $G_2$\\
\hline
Bounds on $c_i$ & $-3 \leq c_i \leq 4$ &  $-6 \leq c_i \leq 6$ & $-4 \leq c_i \leq 5$ & $-4 \leq c_i \leq 6$\\
\hline
\end{tabular}
\end{center}

\vskip.1in\noindent
For example, in types $A_n$, $D_n$, or $E_n$, when a positive root is expressed as a sum of fundamental dominant weights, the coefficients are $-1, 0, 1,$ or $2$, with 2 occurring only for a simple root.  Since we have a sum of three distinct roots, for any given $\omega_i$, the coefficient is at most $2 + 1 + 1 = 4$.  

Since $\nu \neq 0$, in a similar expression for $p\nu$, at least one $\omega_i$ has a coefficient that is a multiple of $p$.  From the above table, this is clearly not possible for $p \geq 7$.   Further, for types $A_n, D_n,$ and $E_n$ this is not possible for $p \geq 5$.   We are left to consider types $B_n$, $C_n$, $F_4$, and $G_2$ when $p = 5$.  In these cases, since $p\nu$ lies in the root lattice, $\nu$ necessarily also lies in the root lattice. In fact, $\nu$ must lie in the positive root lattice.  

Similar to above, express $\ga$ as a sum of simple roots: $\ga = \sum m_i\al_i$ with $m_i \geq 0$.  To have $\ga = 5\nu$, we must have each $m_i$ being divisible by 5.   For type $B_n$, the largest a coefficient can be is 5, and this occurs only if $\al = \al_n$ and $\si_1$, $\si_2$ both contain $2\al_n$. But then the coefficient of $\al_{n-1}$ is non-zero (it is at least two) and at most 4. So this is impossible.   In type $C_n$, one similarly could have $\al = \al_{n-1}$ and $\si_1, \si_2$ both containing $2\al_{n-1}$. But then the coefficient of $\al_n$ is non-zero (it is at least two) and not divisible by $5$.   One can argue similarly in types $F_4$ and $G_2$ or compute all possible values of $\ga$ to verify the claim.
\end{proof}

\begin{rem}  For $p = 2$ or $p = 3$, it is possible for such weights $\ga$ to lie in $pX(T)$.  For example, in type $B_2$, $\al_1 + (\al_1 + \al_2) + (\al_1 + 2\al_2) = 3(\al_1 + \al_2)$. 
\end{rem}


\subsection{}{\bf Root sums II:}  Next we consider whether weights of $\opH^3(\ul,k)$ can have the form $\si + p\nu$ for $\si$ in the 
positive root lattice and $\nu \neq 0$.  For sufficiently large primes, this is not possible.  For later computations, slightly more general statements are proven than just for the case that $\al, \si_1, \si_2$ are distinct positive roots.

\begin{lem}\label{L:rootsum} Let $\al \in \Pi$ and $\si_1, \si_2 \in \Phi^+ \cup \{0\}$.
Let $\si \in {\mathbb Z}\Phi^+$ with $\si = \sum n_i\al_i$ and $0 \leq n_i < p$.   
Make the following assumption on $p$ dependent on the root system:

\begin{center}
\begin{tabular}{|c|c|c|c|c|c|c|c|c|c|}
\hline
$\Phi$ & $A_n$ & $B_n$ & $C_n$ & $D_n$ & $E_6$ & $E_7$ & $E_8$ & $F_4$ & $G_2$\\
\hline
$p \geq $ & 5 & 7 & 7 & 7 & 11 & 11 & 17 & 11 & 11\\
\hline
\end{tabular}
\end{center}

\vskip.2in\noindent
Suppose $\al + \si_1 + \si_2 = \si + p\nu$ for some $\nu \in X(T)$. 
Suppose further that in type $A_n$ either $\nu$ lies in the root lattice or $p$ does not divide $n + 1$.  
Then $\nu = 0$.  
\end{lem}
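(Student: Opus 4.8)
The plan is to pass to simple-root coordinates and reduce the statement to an elementary bound on the coefficients of $\ga := \al + \si_1 + \si_2$. I would first note that since $\al, \si_1, \si_2$, and $\si$ all lie in the root lattice ${\mathbb Z}\Phi$, the hypothesis $\al + \si_1 + \si_2 = \si + p\nu$ immediately gives $p\nu \in {\mathbb Z}\Phi$. Writing $\ga = \sum_i m_i\al_i$ with each $m_i$ a nonnegative integer, I would split $m_i$ as the contribution of the simple root $\al$ (namely $0$ or $1$) plus the contributions of $\si_1$ and $\si_2$. Since the highest root $\theta = \sum_i a_i\al_i$ dominates every positive root, the $\al_i$-coefficient of any element of $\Phi^+\cup\{0\}$ is at most the mark $a_i$, so $m_i \leq 1 + 2a_i$ for all $i$.

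The essential step will be to upgrade $p\nu \in {\mathbb Z}\Phi$ to $\nu \in {\mathbb Z}\Phi$. Let $d = |X(T)/{\mathbb Z}\Phi|$ be the order of the fundamental group, so that $d = n+1$ in type $A_n$, $d \in \{2,3,4\}$ in types $B_n, C_n, D_n, E_6, E_7$, and $d = 1$ in types $E_8, F_4, G_2$. If $p \nmid d$, then multiplication by $p$ is an automorphism of the finite group $X(T)/{\mathbb Z}\Phi$, so $p\nu \in {\mathbb Z}\Phi$ forces $\nu \in {\mathbb Z}\Phi$. For every type other than $A_n$ the tabulated bound gives $p > d$ and hence $p \nmid d$, so $\nu \in {\mathbb Z}\Phi$. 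In type $A_n$, if $p \nmid n+1$ the same reasoning applies, while if $p \mid n+1$ the hypothesis stipulates that $\nu$ already lies in the root lattice; either way $\nu \in {\mathbb Z}\Phi$.

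With $\nu \in {\mathbb Z}\Phi$ secured, I would write $\nu = \sum_i \nu_i\al_i$ with $\nu_i \in {\mathbb Z}$ and compare coefficients in $\ga = \si + p\nu$ to obtain $m_i = n_i + p\nu_i$. Since $0 \leq n_i < p$ and $m_i \geq 0$, each $\nu_i \geq 0$, and $\nu = 0$ exactly when $m_i < p$ for all $i$. It then remains only to check, type by type, that $1 + 2a_i < p$ for the primes in the table. This is a finite verification against the marks of the highest root: $\max_i a_i$ equals $1, 2, 2, 2, 3, 4, 6, 4, 3$ in types $A_n, B_n, C_n, D_n, E_6, E_7, E_8, F_4, G_2$, so $1 + 2\max_i a_i \in \{3,5,5,5,7,9,13,9,7\}$, each strictly below the corresponding tabulated prime. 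Thus $m_i < p$ for all $i$, every $\nu_i = 0$, and $\nu = 0$.

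I expect the real content to lie in the root-lattice reduction of the second paragraph, where the fundamental group intervenes and the type-$A_n$ hypothesis becomes indispensable. By contrast, the coefficient bound is a routine consequence of the dominance of the highest root together with the finite check of marks, and the first and third paragraphs are essentially bookkeeping.
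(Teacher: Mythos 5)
Your proof is correct and follows essentially the same route as the paper's: first reduce to $\nu \in {\mathbb Z}\Phi$ (which the paper asserts tersely and you justify cleanly via the order of $X(T)/{\mathbb Z}\Phi$), then compare simple-root coefficients using the bound $m_i \leq 1 + 2a_i$ coming from the marks of the highest root, checked against the tabulated primes. The only difference is that you make explicit the fundamental-group step that the paper leaves implicit.
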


\begin{proof}  First observe that $p\nu$ necessarily lies in the root lattice.  By the assumptions on $p$, this implies that $\nu$ lies in the root lattice.  

Set $\ga := \al + \si_1 + \si_2$ and express $\ga = \sum m_i\al_i$ as a sum of simple roots (with $m_i \geq 0$).   The equation $\ga = \si + p\nu$ becomes $\sum m_i\al_i = \sum n_i\al_i + p\nu$.  Since $0 \leq n_i < p$ for each $i$, if $p > m_i$ for all $i$, then we necessarily have $\nu = 0$.  We recall in the following table the maximum value of a coefficient when a root is expressed as a sum of simple roots.

\vskip.1in\begin{center}
\begin{tabular}{|c|c|c|c|c|c|c|c|c|c|}
\hline
$\Phi$ & $A_n$ & $B_n$ & $C_n$ & $D_n$ & $E_6$ & $E_7$ & $E_8$ & $F_4$ & $G_2$\\
\hline
max coefficient & 1 & 2 & 2 & 2 & 3 & 4 & 6 & 4 & 3\\
\hline
\end{tabular}
\end{center}

\vskip.1in\noindent
Keeping in mind that $\al$ is simple, the assumptions on $p$ indeed give $p > m_i$ for all $i$.
\end{proof}

For our purposes, the weight $\si$ appearing in Lemma \ref{L:rootsum} will have more specific form.  The next 
two results present cases where one can lower the condition on the prime. 

\begin{prop}\label{P:d2rootsum} Let $\al \in \Pi$, $\si_1, \si_2 \in \Phi^+ \cup \{0\}$, and assume that all 
non-zero roots are distinct.  Suppose that 
$$
\al + \si_1 + \si_2 = \be + \si_3 + p\nu
$$
for distinct roots $\be, \si_3 \in \Phi^+$ with $\be \in \Pi$ and $\nu \in {\mathbb Z}\Phi$. 
Make the following assumption on $p$ dependent on the root system:

\begin{center}
\begin{tabular}{|c|c|c|c|c|c|c|c|c|c|}
\hline
$\Phi$ & $A_n$ & $B_n$ & $C_n$ & $D_n$ & $E_n$ & $F_4$ & $G_2$\\
\hline
$p \geq $ & 5 & 7 & 5 & 5 & 5 & 7 & 7\\
\hline
\end{tabular}
\end{center}

\vskip.1in\noindent
Then $\nu = 0$.  
\end{prop}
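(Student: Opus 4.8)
The plan is to combine the two hypotheses into the single equation
$$
p\nu = \al + \si_1 + \si_2 - \be - \si_3
$$
in the root lattice, exhibiting $p\nu$ as a $\mathbb Z$-combination of at most five roots, and then to bound the length of $\nu$. I would first dispatch types $A_n$ and $B_n$ directly from Lemma~\ref{L:rootsum}. Since $\be + \si_3$ is a sum of a simple root and a single positive root, each of its coefficients (as a sum of simple roots) is at most $1 + M$, where $M$ is the maximal coefficient of a single root. For $A_n$ this is $2$ and for $B_n$ it is $3$, both strictly less than $p$ under the stated bounds, so taking $\si = \be + \si_3$ in Lemma~\ref{L:rootsum} (whose thresholds for $A_n$ and $B_n$ are exactly $p \geq 5$ and $p \geq 7$, and whose extra $A_n$ hypothesis holds since $\nu \in {\mathbb Z}\Phi$) immediately yields $\nu = 0$. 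This leaves precisely the types $C_n$, $D_n$, $E_n$, $F_4$, and $G_2$, where $p$ has been pushed below the threshold of Lemma~\ref{L:rootsum}.

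For these I would argue geometrically. Writing $\|v\|^2 = \langle v,v\rangle$, the triangle inequality gives
$$
p\,\|\nu\| \leq \|\al\| + \|\si_1\| + \|\si_2\| + \|\be\| + \|\si_3\|,
$$
and the crucial point is that equality would force all five vectors $\al, \si_1, \si_2, -\be, -\si_3$ to be non-negative multiples of a common vector. This is impossible: $\al$ is a positive root while $-\be$ is negative, so they can never point the same way. Hence the inequality is strict (and has even fewer terms when some $\si_i$ vanishes). For the simply-laced types $D_n$ and $E_n$ every root satisfies $\|\cdot\|^2 = 2$, so the right-hand side is at most $5\sqrt2$ and strictly so; with $p \geq 5$ this forces $\|\nu\|^2 < 2$. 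Since the shortest nonzero vectors of the root lattice have squared length $2$, we conclude $\nu = 0$.

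For the non-simply-laced types the same strict inequality, now with $\|\cdot\|^2 \le 4$ for $C_n$, $F_4$ and $\le 6$ for $G_2$ on the right, gives $\|\nu\|^2 < 4$ when $p = 5$ (type $C_n$) and $\|\nu\|^2 < 100/49$, $\|\nu\|^2 < 150/49$ when $p = 7$ (types $F_4$, $G_2$). In every case the only nonzero root-lattice vectors not excluded are the short roots, of squared length $2$. It thus remains to rule out $\nu$ being a short root, and for this I would pass to the standard coordinate model: in $C_n$ a short root is $\pm e_a \pm e_b$, so $p\nu$ has exactly two nonzero coordinates, each equal to $\pm p$, whereas each of the five summand roots contributes at most $2$ in absolute value to any single coordinate; a counting-and-parity argument, using that two of the summands are the prescribed simple roots, shows that $\pm p$ cannot be realized in both coordinates at once. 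Analogous finite checks dispose of $F_4$ and $G_2$.

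I expect the real work to be this short-root elimination, and specifically the type-$C_n$ case at $p = 5$. Because $F_4$ and $G_2$ are single root systems, their short-root cases reduce to finite, even machine-checkable, verifications; $C_n$, by contrast, is an infinite family and demands a uniform coordinate argument that simultaneously controls the two root lengths and the rigidity coming from two of the five summands being simple roots. The simply-laced types $D_n$, $E_n$ close immediately via the strict triangle inequality, and $A_n$, $B_n$ via Lemma~\ref{L:rootsum}, so essentially all the difficulty is concentrated in $C_n$.
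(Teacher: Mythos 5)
Your proposal is correct, but the middle of your argument takes a genuinely different route from the paper's. Both proofs open the same way, feeding $\si=\be+\si_3$ into Lemma~\ref{L:rootsum} to dispose of $A_n$ and $B_n$ (and of the larger primes in the remaining types). From there the paper stays in the simple-root basis: it compares coefficients of both sides modulo $p$, which for $C_n$ and $D_n$ at $p=5$ forces $\al$ and both $\si_i$ to stack a coefficient of $5$ on a single simple root and then derives a contradiction from the adjacent coefficients; the exceptional types $E_n$, $F_4$, $G_2$ are finished by a MAGMA enumeration. Your Euclidean argument replaces all of this with the strict triangle inequality $p\|\nu\|<\|\al\|+\|\si_1\|+\|\si_2\|+\|\be\|+\|\si_3\|$ (strict because $\al$ and $-\be$ cannot lie on a common ray), which kills every simply-laced case at once -- including $E_8$ at $p=5$, where the paper computes -- since the root lattice has minimal squared norm $2$, and reduces $C_n$, $F_4$, $G_2$ to the single possibility that $\nu$ is a short root. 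That residual case is the only place your write-up is a sketch rather than a proof, but it does close: for $\nu=\pm e_a\pm e_b$ in $C_n$ the functional $v\mapsto \pm v_a\pm v_b$ takes the value $2p=10$ on $p\nu$ while each of the five roots contributes at most $2$, so all five must contribute exactly $2$; this is impossible because the positive roots achieving $2$ form a two-element set when $\nu=\pm(e_a-e_b)$ (too small for the three distinct roots $\al,\si_1,\si_2$) and because no \emph{negative} of a positive root achieves $2$ when $\nu=\pm(e_a+e_b)$. The trade-off is that the paper's coefficient bookkeeping is what actually produces the explicit list of nontrivial solutions at $p=5$ recorded in Remark~\ref{R:d2rootsum}, which is quoted later (e.g.\ after Theorem~\ref{T:utoU1}); your norm bound proves vanishing cleanly but does not generate that inventory.
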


\begin{proof}  By Lemma \ref{L:rootsum}, we only need to consider the following cases: types $C_n$, $D_n$ and $E_n$ for $p = 5$, 
types $E_n$, $F_4$, and $G_2$ for $p = 7$, and type $E_8$ for $p = 11, 13$.  We proceed in a case-by-case basis, and, as in 
the proof of Lemma \ref{L:rootsum}, express the left and right hand sides of the given equation as sums of simple roots.

Consider type $C_n$ with $p = 5$. To have a non-trivial solution, we would need to have $\al = \al_{n-1}$, both $\si_1$ and $\si_2$ containing $2\al_{n-1}$, $\nu = \al_{n-1}$, and neither $\be$ nor $\si_3$ containing $\al_{n-1}$.  Since $\si_3$ does not contain an $\al_{n-1}$, it cannot contain {\em both} an $\al_{n-2}$ and an $\al_n$, nor does $\be$ (since $\be$ is simple).  So the sum of the coefficients of $\al_{n-2}$ and $\al_n$ in $\be + \si_3$ is at most two. On the other hand, both $\si_1$ and $\si_2$ must contain an $\al_n$ and, since they are distinct, at least one contains an $\al_{n-2}$.  Therefore, the sum of the coefficients of $\al_{n-2}$ and $\al_n$ in $\si_1 + \si_2$ is at least three.  So no solution is possible.

For type $D_n$ with $p = 5$, similar to the type $C_n$ case, to have a non-trivial solution, 
we must have $\al = \al_{n-2}$, $\si_1, \si_2$ each containing $2\al_{n-2}$, and neither $\be$ nor $\si_3$ 
containing an $\al_{n-2}$. Then $\si_1 + \si_2$ contains $2\al_{n-1} + 2\al_n$, while $\be$ and $\si_3$ can each contain 
at most one of $\al_{n-1}$ or $\al_n$.  Therefore, no solution is possible.

For the exceptional cases, the claim has been verified by using Magma \cite{BC, BCP} to compute all possible sums over
$\al, \si_1, \si_2, \be, \si_3$.   One can also argue in a manner similar to above (with many more cases). 
\end{proof}

\begin{rem}\label{R:d2rootsum} When $p = 2, 3$, there exists cases with $\nu \neq 0$ in all types.  For $p = 5$, 
arguing as above, one finds that the {\it only} non-trivial solutions are as follows:
\begin{itemize}
\item[(i)] Type $B_n$: $n \geq 3$, and $1 \leq i \leq n - 2$, 
\begin{itemize}
\item[$\bullet$]
$\al_n + (\al_{n-1} + 2\al_n) + (\al_i + \cdots + \al_{n-1} + 2\al_n) = \al_{n-1} + (\al_i + \cdots + \al_{n-1}) + 5\al_n$ 
\item[$\bullet$] For $i < n-2$, the weight $\ga := \al_i + \cdots + \al_{n-2} + 2\al_{n-1} + 5\al_n$ does not have the form 
$-w\cdot 0$ for $\ell(w) = 3$.  Moreover, the only way that $\ga$ can arise as the sum of three distinct positive roots is as 
$\ga = \al_n + (\al_{n-1} + 2\al_n) + (\al_i + \cdots + \al_{n-1} + 2\al_n)$. As such, the corresponding cohomology class would 
have to be represented by $\phi_{\al_n}\wedge\phi_{\al_{n-1} + 2\al_n}\wedge\phi_{\al_i + \cdots + \al_{n-1} + 2\al_n} \in \Lambda^3(\ul^*)$.  However, one can directly check that the differential is non-zero on this element, and so it does not 
represent a cohomology class in $\opH^3(\ul,k)$. 
\end{itemize}
\item[(ii)] Type $F_4$:  
\begin{itemize}
\item[$\bullet$] $\al_3 + (\al_2 + 2\al_3) + (\al_1 + \al_2 + 2\al_3) = \al_2 + (\al_1 + \al_2) + 5\al_3$
\item[$\bullet$] $\al_3 + (\al_2 + 2\al_3) + (\al_1 + 2\al_2 + 4\al_3 + 2\al_2) = \al_2 + (\al_1 + 2\al_2 + 2\al_3 + 2\al_4) + 5\al_3$
\end{itemize}
\item[(iii)] Type $G_2$: 
\begin{itemize}
\item[$\bullet$] $\al_1 + (2\al_1 + \al_2) + (3\al_1 + \al_2) = \al_2 + (\al_1 + \al_2) + 5\al_1$.
\end{itemize}
\end{itemize}
\end{rem}

\begin{prop}\label{P:rootsum-triple} Let $\al, \si_1, \si_2 \in \Phi^+$ be distinct positive roots with $\al \in \Pi$.
Suppose that 
$$
\al + \si_1 + \si_2 = i_1\be_1 + i_2\be_2 + i_3\be_3 + p\nu 
$$
for distinct $\be_j \in \Pi$ with $0 \leq i_1, i_2 < p$, $0 \leq i_3 \leq 1$ and $\nu \in X(T)$.
Make the following assumption on $p$ dependent on the root system:

\vskip.1in
\begin{center}
\begin{tabular}{|c|c|c|c|c|c|c|c|c|c|c|}
\hline
$\Phi$ & $A_n, n \neq 4, 6$ & $A_4$ & $A_6$ & $B_2 = C_2$ & $B_n, n \geq 3$ & $C_n, n \geq 3$ & $D_n$ & $E_n$ & $F_4$ & $G_2$\\
\hline
$p \geq $ & 5 & 7 & 5 ($\neq 7$) & 5 & 7 & 7 & 5 & 5 & 11 & 11\\
\hline
\end{tabular}
\end{center}

\vskip.1in\noindent
Then $\nu = 0$.  
\end{prop}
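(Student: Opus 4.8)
The plan is to obtain the proposition from Lemma~\ref{L:rootsum} in the large-prime regime and to treat the finitely many small-prime exceptions by a sharper analysis exploiting the special shape of the right-hand side. First I would observe that $p\nu = (\al+\si_1+\si_2) - (i_1\be_1+i_2\be_2+i_3\be_3)$ lies in the root lattice $\mathbb{Z}\Phi$, so $\nu \in \mathbb{Z}\Phi$ whenever $p$ does not divide the order of $X(T)/\mathbb{Z}\Phi$. Outside type $A_n$ this order is at most $4$, so $p \geq 5$ already gives $\nu \in \mathbb{Z}\Phi$; in type $A_n$ the order is $n+1$, and it is precisely the divisibility $p \mid (n+1)$ that necessitates the special restrictions there.

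Assuming $\nu \in \mathbb{Z}\Phi$, I would write $\ga := \al+\si_1+\si_2 = \sum_\ell m_\ell\al_\ell$ and $\nu = \sum_\ell\nu_\ell\al_\ell$ and compare coefficients: $m_\ell = c_\ell + p\nu_\ell$, where $c_\ell \in \{0,i_1,i_2,i_3\}$ is the coefficient of $\al_\ell$ in $i_1\be_1+i_2\be_2+i_3\be_3$. Since $0 \leq c_\ell < p$ and $m_\ell \geq 0$, each $\nu_\ell \geq 0$, and $\nu_\ell \geq 1$ forces $m_\ell \geq p$. As $\al$ is simple and $\si_1,\si_2$ are positive roots, $m_\ell \leq 1 + 2M$, where $M$ is the largest coefficient of a simple root in any positive root of $\Phi$. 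Hence if $p > 1 + 2M$ then every $m_\ell < p$ and $\nu = 0$; this is the content of Lemma~\ref{L:rootsum} and disposes of all cases except $B_2 = C_2$ at $p=5$, $D_n$ at $p=5$, $E_6$ and $E_7$ at $p \in \{5,7\}$, and $E_8$ at $p \in \{5,7,11,13\}$. (In type $A_n$ the bound $M = 1$ forces $m_\ell \leq 3 < p$ throughout, and a direct check shows that nonzero-$\nu$ solutions occur only at $A_4$ for $p = 5$ and $A_6$ for $p = 7$, both excluded by hypothesis.)

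In each remaining case $p \leq 1 + 2M$, so a coefficient $m_\ell \geq p$ is a priori possible, and the extra leverage is that $i_1\be_1+i_2\be_2+i_3\be_3$ is supported on only three simple roots. The key consequence of the coefficient comparison is that any simple root with $0 < m_\ell < p$ must lie in $\{\be_1,\be_2,\be_3\}$: such an $\ell$ has $\nu_\ell = 0$, whence $c_\ell = m_\ell \neq 0$. Thus at most three simple roots can have a $\ga$-coefficient strictly between $0$ and $p$. On the other hand, if $\nu \neq 0$ then some $m_{\ell_0} \geq p$, which, since $\al$ is simple, forces both $\si_1$ and $\si_2$ to carry a nearly maximal coefficient at $\al_{\ell_0}$; positive roots with a large coefficient at a fixed node spread across many nodes of the Dynkin diagram, so $\ga$ then has more than three simple roots with coefficient strictly between $0$ and $p$, a contradiction. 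I would verify this wide-support phenomenon directly in each boundary case $p = 1 + 2M$, namely $B_2 = C_2$ and $D_n$ at $p = 5$, $E_6$ at $p = 7$, and $E_8$ at $p = 13$; here the distinctness of $\al, \si_1, \si_2$ is essential, and in the rank-two case $B_2 = C_2$ it already prevents the only candidate coordinate from reaching $5$.

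For the interior cases $E_6, E_7$ at $p = 5$, $E_7$ at $p = 7$, and $E_8$ at $p \in \{5,7,11\}$, a large coefficient $m_{\ell_0}$ may equal $2p$ or $3p$ and the wide-support count must be examined more carefully. Following the pattern of the proof of Proposition~\ref{P:d2rootsum}, I would enumerate the finitely many triples $(\al,\si_1,\si_2)$ whose sum has a coefficient divisible by $p$ and confirm, by hand for the classical cases and by Magma \cite{BC,BCP} for the exceptional ones, that none can be written as $i_1\be_1+i_2\be_2+i_3\be_3 + p\nu$ with $\nu \neq 0$. I expect the genuine obstacle to be type $E_8$ at $p = 5$: there $M = 6$ produces coefficients as large as $13$ and permits $m_\ell \in \{0,5,10\}$ at many coordinates at once, so the search is large and most reliably completed by computer.
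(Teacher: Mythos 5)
Your overall strategy coincides with the paper's: invoke Lemma~\ref{L:rootsum} whenever $p$ exceeds the coefficient bound $1+2M$, reduce to $\nu\in{\mathbb Z}\Phi$ via the index of connection, settle the leftover classical cases ($B_2$ and $D_n$ at $p=5$) by a support/coefficient count, and settle the leftover $E_n$ cases by machine computation --- this is exactly how the paper proceeds, and your uniform reformulation (``at most three nodes can carry a $\ga$-coefficient strictly between $0$ and $p$, while a node with coefficient $\geq p$ forces wide support'') is a clean packaging of the ad hoc arguments the paper gives for $B_2$ and $D_n$. Those parts of the proposal are sound.

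The one genuine gap is type $A_n$ with $p\mid n+1$. There $\nu$ need not lie in ${\mathbb Z}\Phi$, so your coefficient comparison $m_\ell=c_\ell+p\nu_\ell$ is unavailable, and the observation $m_\ell\leq 3<p$ proves nothing by itself. You dispose of this case with ``a direct check shows that nonzero-$\nu$ solutions occur only at $A_4$ for $p=5$ and $A_6$ for $p=7$,'' but this is an assertion about the infinite family of pairs $(n,p)$ with $p\mid n+1$ (e.g.\ $A_9$ with $p=5$, $A_{13}$ with $p=7$, $A_{10}$ with $p=11$, \dots), so no finite check can establish it; a uniform argument is required. The paper supplies one: writing $\nu=t\omega_1+\nu'$ with $t\not\equiv 0$, the coefficients of $pt\omega_1=\frac{pt}{n+1}(n\al_1+\cdots+\al_n)$ run over \emph{all} nonzero residues mod $p$, at most three of which can be altered by the $i_j\be_j$ terms, whereas the left-hand side $\ga$ has coefficients only in $\{0,1,2,3\}$; this forces a contradiction for $p>7$, and a refinement (each residue occurs at least twice once $n+1\geq 2p$) handles $p=5,7$ except precisely for $A_4$ at $p=5$ and $A_6$ at $p=7$, which the hypotheses exclude. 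Without this idea, or an equivalent one, your treatment of type $A$ is incomplete.
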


\begin{proof} In type $A_n$ when $p$ does not divide $n + 1$, type $B_n$ ($n \geq 3$), type $C_n$ ($n \geq 3$), type $F_4$, 
and type $G_2$, we are done by Lemma \ref{L:rootsum}. Set $\ga := \al + \si_1 + \si_2$.

For type $B_2$ (or equivalently $C_2$) when $p = 5$, when 
expressed as a sum of simple roots, the maximal coefficient of $\ga$ is 4. Hence, no solution is possible.  
In type $D_n$, we only need to consider the case $p = 5$.  We may assume 
that $n \geq 5$ since in $D_4$ only one root has a coefficient of 2 when expressed as a sum of simple roots.  
As in the proof of Proposition~\ref{P:d2rootsum}, to have a non-trivial solution, we would need 
$\al = \al_{n-2}$, $\si_1, \si_2$ each containing $2\al_{n-2}$, and $\nu = \al_{n-2}$.
Then $\ga - 5\al_{n-2}$ would contain at least four simple roots, and we are allowed at most 3. 
For type $E_n$, the claim has again been verified by direct computation using MAGMA \cite{BC,BCP}.

Lastly, consider the case that $\Phi$ is of type $A_n$ with $p$ dividing $n + 1$.  We argue as in \cite[Section 3.4]{BNP2}.  
In this situation, $X(T)/{\mathbb Z}\Phi = \{t\omega_1 + {\mathbb Z}\Phi : t = 0, 1, \dots, n\}$ and 
$$
t\omega_1 = \frac{t}{n+1}(n\al_1 + (n-1)\al_2 + \cdots + \al_n).
$$
If $\nu$ lies in the root lattice, then we are done, so we may assume that $\nu = t\omega_1 + \nu'$ for some $t \neq 0$ and
$\nu' \in {\mathbb Z}\Phi$. Our equation becomes
$$
\ga = i_1\be_1 + i_2\be_2 + i_3\be_3 + pt\omega_1 + p\nu'.
$$
We may further assume that $pt/(n+1)$ is not congruent to zero mod $p$, or we would be done as previously.   
Express both sides as sums of simple roots.  
On the left, the coefficients appearing in $\ga$ can be $0, 1, 2,$ or $3$. In other words, there are at most three distinct non-zero
coefficients.   On the right, notice that, mod $p$, the numbers $1, 2, \dots, p-1$ all appear as coefficients in $pt\omega_1$.  
At most three of those can be ``cancelled'' by the $\be_j$ terms.  Hence, there are at least $p - 4$ distinct non-zero coefficients mod $p$.  
So we have a contradiction if $p - 4 > 3$ or $p > 7$.  For $p = 7$, if $n + 1 \geq 14$, notice that the numbers $1, 2, \dots, p-1$ appear at least {\it twice} as coefficients (from distinct simple roots) in $pt\omega_1$, which again leads to a contradiction. 
One has a similar situation for $p = 5$ and $n + 1 \geq 10$.   That leaves the two ``bad'' cases of type $A_6$ when $p = 7$ and $A_4$ when $p = 5$.  

Note that there are non-trivial solutions in these ``bad'' type $A$ cases. For example, in type $A_4$ (with $p = 5$) some
solutions (not all) appear in Remark~\ref{R:rootsum-simple} below.
\end{proof}

\begin{rem}\label{R:rootsum-triple} For the following root systems and primes, the following are the {\it only} non-trivial solutions:
\begin{itemize}
\item[(i)] Type $C_n$, $n \geq 3$, $p = 5$
\begin{itemize}
\item[$\bullet$] 
$\al_{n-1} + (c_1\al_{n-2} + 2\al_{n-1} + \al_n) + (c_2\al_{n-3} + c_3\al_{n-2} + 2\al_{n-1} + \al_n) = c_2\al_{n-3} + (c_1 + c_3)\al_{n-2} + 2\al_n + 5\al_{n-1}$ for appropriate $c_1 \in \{0,1,2\}$, $c_2 \in \{0,1\}$, $c_3 \in \{1, 2\}$ (9 cases for $n \geq 4$, 3 cases for $n = 3$)
\item[$\bullet$] 
$\al_{n-2} + (2\al_{n-2} + 2\al_{n-1} + \al_n) + (\al_{n-3} + 2\al_{n-2} + 2\al_{n-1} + \al_n) = 
\al_{n-3} + 4\al_{n-1} + 2\al_n + 5\al_{n-2}$ ($n \geq 4$)
\item[$\bullet$]
Note that all weights occur within the type $C_4$ root subsystem.
\end{itemize}
\item Type $F_4$, $p = 7$
\begin{itemize}
\item[$\bullet$]
$\al_3 + (\al_2 + 2\al_3 + c\al_4) + (\al_1 + 2\al_2 + 4\al_3 + 2\al_4) = \al_1 + 3\al_2 + (c + 2)\al_4 + 7\al_3$ for $c \in \{0,1,2\}$
\item[$\bullet$]
$\al_3 + (\al_2 + 2\al_3 + c\al_4) + (\al_1 + 3\al_2 + 4\al_3 + 2\al_4) = \al_1 + 4\al_2 + (c + 2)\al_4 + 7\al_3$ for $c \in \{0,1,2\}$
\item[$\bullet$]
$\al_2 + (\al_1 + 3\al_2 + 4\al_3 + 2\al_4) + (2\al_1 + 3\al_2 + 4\al_3 + 2\al_4) = 3\al_1 + \al_3 + 4\al_4 + 7(\al_2 + \al_3)$
\end{itemize}
\item Type $G_2$, $p = 7$
\begin{itemize}
\item[$\bullet$] $\al_1 + (3\al_1 + \al_2) + (3\al_1 + 2\al_2) = 3\al_2 + 7\al_1$
\end{itemize}
\item Denote the above expressions by $\ga = \si + p\nu$.  Note that in each case neither $\ga$ nor $\si$ has the form
$-w\cdot 0$ for $\ell(w) = 3$.  
\end{itemize}
\end{rem}

\begin{cor}\label{C:rootsum-simple} Assume $p \geq 5$.  If $\Phi$ is of type $A_4$, assume $p \geq 7$.
Let $\al, \si_1, \si_2 \in \Phi^+$ be distinct positive roots with $\al \in \Pi$. Then
$$
\al + \si_1 + \si_2 \neq \be + p\nu
$$
for any $\be \in \Pi$ and $\nu \in X(T)$.
\end{cor}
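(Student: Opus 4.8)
The plan is to assume, for contradiction, that $\al + \si_1 + \si_2 = \be + p\nu$ for some $\be \in \Pi$ and $\nu \in X(T)$, and to reduce everything to showing $\nu = 0$. Once $\nu = 0$ is in hand the equation reads $\al + \si_1 + \si_2 = \be$, which is impossible on height grounds: the left-hand side has height $\geq 3$ while $\text{ht}(\be) = 1$. Thus the entire content of the corollary is the vanishing of $\nu$. The key observation driving this is that $\be + p\nu$ is precisely the specialization $i_1 = i_2 = 0$, $i_3 = 1$, $\be_3 = \be$ of the right-hand side appearing in Proposition~\ref{P:rootsum-triple}.

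Consequently, for every root system and prime meeting the hypotheses of Proposition~\ref{P:rootsum-triple}, I would simply quote it to conclude $\nu = 0$. Comparing its table of primes against the (weaker) hypotheses here, the only pairs $(\Phi,p)$ left uncovered are: type $A_6$ with $p = 7$; types $B_n$ and $C_n$ $(n \geq 3)$ with $p = 5$; and types $F_4$ and $G_2$ with $p = 5$ or $7$. Several of these are dispatched at once using Remark~\ref{R:rootsum-triple}, which enumerates \emph{all} non-trivial solutions of the general equation for type $C_n$ at $p=5$ and for types $F_4$, $G_2$ at $p=7$. In each listed solution the ``$\si$-part'' $i_1\be_1 + i_2\be_2 + i_3\be_3$ carries a coefficient $\geq 2$ and so is never a single simple root $\be$; hence the specialized equation has no solution with $\nu \neq 0$ in those cases.

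The remaining cases need direct arguments. For $A_6$ with $p = 7$ the subtlety is that $7 \mid n+1$, so $\nu$ need not lie in ${\mathbb Z}\Phi$; writing $\nu \equiv t\omega_1 \pmod{{\mathbb Z}\Phi}$ and using $7\omega_1 = 6\al_1 + 5\al_2 + \cdots + \al_6$, I would reduce the coefficient equations modulo $7$. Since every coefficient of $\al + \si_1 + \si_2$ lies in $\{0,1,2,3\}$ while $i \mapsto -ti$ permutes the nonzero residues mod $7$, the five coefficients away from the $\be$-slot would have to realize five distinct nonzero residues inside $\{1,2,3\}$, which is impossible; this forces $t = 0$ and reduces to the $\nu \in {\mathbb Z}\Phi$ case, where the congruences $m_i \equiv \delta_{ik}\ (\text{mod }7)$ together with $m_i \leq 3$ give $\nu = 0$. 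For $B_n$ with $p = 5$ (where $\nu \in {\mathbb Z}\Phi$ automatically) I would use the bound $m_i \leq 1 + 2 + 2 = 5$ on simple-root coefficients: $\nu \neq 0$ forces some $m_i = 5$, and tracking the $\al_n$-coefficient then pins down $\al = \al_n$ and $\nu = \al_n$, so that $\si_1 + \si_2 = \be + 4\al_n$. Passing to the $e_i$-coordinates makes the contradiction transparent, as the left side $\,e_{a_1}+e_{b_1}+e_{a_2}+e_{b_2}\,$ has nonnegative entries summing to $4$ while the right side either has a negative entry or demands $e_n$-multiplicity $3$. The leftover finite cases $F_4$ and $G_2$ at $p = 5$ I would settle by direct enumeration—by hand for $G_2$ using its six positive roots, and by computer for $F_4$, as elsewhere in the paper.

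The main obstacle is exactly these small-prime non-simply-laced cases $B_n$, $F_4$, $G_2$ at $p = 5$ that fall outside both Proposition~\ref{P:rootsum-triple} and Remark~\ref{R:rootsum-triple}: here the \emph{general} root-sum equation genuinely admits non-trivial solutions, so one cannot finish by a crude coefficient bound and must instead exploit the extra rigidity forced by the right-hand side being a single simple root.
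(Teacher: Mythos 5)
Your proposal is correct and follows essentially the same route as the paper: specialize Proposition~\ref{P:rootsum-triple} (with $i_1=i_2=0$, $i_3=1$) to force $\nu=0$ wherever its hypotheses apply, then dispose of the exceptional pairs $(\Phi,p)$ one by one, with your coefficient-tracking arguments for $B_n$ and $C_n$ at $p=5$ matching the paper's in substance. The only divergence is that where the paper verifies $A_6$ at $p=7$ and $F_4$, $G_2$ at $p=5,7$ by MAGMA, you substitute a mod-$7$ residue count, citations to Remark~\ref{R:rootsum-triple}, and a finite enumeration, all of which are valid.
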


\begin{proof} First observe that we could not have $\nu = 0$.  
In types $A_n$ (except $A_6$ with $p = 7$), $D_n$, and $E_n$, the claim follows immediately from 
Proposition~\ref{P:rootsum-triple}.   By direct calculation using MAGMA \cite{BC, BCP}, the claim can be verified for type
$A_6$ with $p = 7$ and types $F_4$ and $G_2$ when $p = 5, 7$ (with larger primes following from the proposition).  It remains
to consider types $B_n$ and $C_n$ ($n \geq 3$) for $p = 5$.  In those cases, we argue as in the proof of 
Proposition~\ref{P:d2rootsum} supposing that there was a non-zero $\nu$ for which the equation held.  As seen there,
in type $B_n$ (respectively, $C_n$), for that to be possible, the left side would contain between 
$2\al_{n-1}$ and $4\al_{n-1}$ (respectively, $2\al_n$ and $4\al_n$) which would not match with the single simple root $\be$
on the right.
\end{proof}

\begin{rem}\label{R:rootsum-simple} For type $A_4$ with $p = 5$, there are precisely two solutions
\begin{itemize}
\item $-s_3s_2s_1\cdot 0 = \al_3 + (\al_2 + \al_3) + (\al_1 + \al_2 + \al_3) = \al_4 + 5(\omega_3 - \omega_4)$
\item $-s_2s_3s_4\cdot 0 = \al_2 + (\al_2 + \al_3) + (\al_2 + \al_3 + \al_4) = \al_1 + 5(-\omega_1 + \omega_2)$.
\end{itemize}

\end{rem}


\subsection{\bf Relating $\ul$ and $U_1$:} We can relate  
$\opH^3(\ul,k)$ to $\opH^3(U_1,k)$ via the first quadrant 
spectral sequence introduced by Friedlander and Parshall \cite{FP1}
for $p \geq 3$ (and later generalized by Andersen and Jantzen \cite{AJ} and
Friedlander and Parshall \cite{FP2}; cf.~also \cite[I.9.20]{Jan1}):

\begin{equation}
E_2^{2i,j} = S^i(\ul^*)^{(1)}\otimes\opH^j(\ul,k) \Rightarrow
\opH^{2i + j}(U_1,k).
\end{equation}

The spectral sequence admits an action of the maximal torus $T$.  Under this
action, all the differentials are $T$-homomorphisms.
Since the odd columns are all zero, the only terms
that can contribute to $\opH^3(U_1,k)$ are 
$E_2^{2,1} = (\ul^*)^{(1)}\otimes\opH^1(\ul,k)$ and $E^{0,3}_2 = \opH^3(\ul,k)$. 

The first question to consider is whether or not these terms (or what portion
of them) survive in the spectral sequence.  For the first term, we must
consider the differential 
$$
d_2 : (\ul^*)^{(1)}\otimes\opH^1(\ul,k) = E_{2}^{2,1} \to E_{2}^{4,0} = S^2(\ul^*)^{(1)}.
$$
We need to identify the kernel of this map.  From Proposition \ref{P:h1}, as long as $p \geq 3$
or $\Phi$ is not of type $G_2$ when $p = 3$, then elements in $(\ul^*)^{(1)}\otimes\opH^1(\ul,k)$
have weight $p\si + \al$ for some $\si \in \Phi^{+}$ and some simple root $\al \in \Pi$.
On the other hand, any weight of the righthand side is of the form $p\la$ for
$\la = \si_1 + \si_2$ with $\si_i \in \Phi^{+}$.  This implies that $\al = p\tau$
for some $\tau \in X(T)$ which is impossible for $p \geq 3$.  Therefore, all elements
must map to zero.  When $p = 3$ and $\Phi$ is of type $G_2$, the ``extra''
weight of $\opH^1(\ul,k)$ is $3\al_1 + \al_2$ which would similarly lead to the condition 
that $\al_2 = 3\tau$ which is impossible.  So for $p \geq 3$, 
$\operatorname{ker}(d_2) = (\ul^*)^{(1)}\otimes\opH^1(\ul,k)$.

Next, we consider the image of 
$$
d_2 : \opH^2(\ul,k) = E_{2}^{0,2} \to E_{2}^{2,1} = (\ul^*)^{(1)}\otimes\opH^1(\ul,k).
$$
This differential was studied in \cite{BNP2}. For $p > 3$, it was shown \cite[Prop. 4.1]{BNP2}
that the map is zero.  For $p = 3$, it was found that the map could be non-zero for
elements having certain weight in types $B_n$ ($\la = \al_{n-1} + 3\al_n$), 
$C_n$ ($\la = 3\al_{n-1} + \al_n$), $F_4$ ($\la = \al_2 + 3\al_3$), and 
$G_2$ ($\la = 3\al_1 + \al_2$).  Moreover, these weights arise uniquely on each side, and 
using \cite[Prop. 5.2]{BNP2} along with the fact that
$\opH^2(B_1,\la) = \opH^2(U_1,\la)^{T_1} = (\opH^2(U_1,k)\otimes\la)^{T_1}$,
one finds that the map is indeed non-zero on representative elements corresponding to these weights.

As noted in \cite{BNP2}, the spectral sequence can be
refined.  Since weight spaces are preserved by the differentials,
and all modules for $T$ are completely reducible, for each
$\lambda\in X(T)$, one obtains a spectral sequence:

\begin{equation}
E_2^{2i,j} = [S^i(\ul^*)^{(1)}\otimes\opH^j(\ul,k)]_{\la}
\Rightarrow
\opH^{2i + j}(U_1,k)_{\la}.
\end{equation}

From our analysis above, we obtain the following result.

\begin{prop} let $p \geq 3$ and $\la \in X(T)$.  As a $T$-module,
\begin{equation}
\opH^3(U_1,k)_{\la} \supseteq ((\ul^*)^{(1)}\otimes\opH^1(\ul,k))_{\la}
\end{equation}
except for the following weights
\begin{itemize}
\item[(i)] $p = 3$, $\Phi$ is of type $B_n$: $\la = \al_{n-1} + 3\al_n$,
\item[(ii)] $p = 3$, $\Phi$ is of type $C_n$: $\la = 3\al_{n-1} + \al_n$,
\item[(iii)] $p = 3$, $\Phi$ is of type $F_4$: $\la = \al_2 + 3\al_3$,
\item[(iv)] $p = 3$, $\Phi$ is of type $G_2$: $\la = 3\al_1 + \al_2$.
\end{itemize}
In the exceptional cases, $\opH^3(U_1,k)_{\la} \subseteq \opH^3(\ul,k)_{\la}$
as $T$-modules.
\end{prop}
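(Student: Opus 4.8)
The plan is to read off the statement directly from the structure of the ($T$-graded) spectral sequence
$$
E_2^{2i,j} = [S^i(\ul^*)^{(1)}\otimes\opH^j(\ul,k)]_{\la} \Rightarrow \opH^{2i+j}(U_1,k)_{\la},
$$
using the two differential computations already established above: that $d_2 : E_2^{2,1} \to E_2^{4,0}$ vanishes for $p \geq 3$, and that $d_2 : E_2^{0,2} \to E_2^{2,1}$ vanishes for $p > 3$ and, for $p = 3$, is non-zero precisely on the four listed weights. First I would record the filtration in total degree $3$. Since the odd columns vanish, the only associated graded pieces are $E_\infty^{2,1}$ and $E_\infty^{0,3}$, and the standard (decreasing) filtration collapses to a short exact sequence of $T$-modules
$$
0 \to E_\infty^{2,1}_{\la} \to \opH^3(U_1,k)_{\la} \to E_\infty^{0,3}_{\la} \to 0.
$$
In particular $E_\infty^{2,1}_{\la}$ sits inside $\opH^3(U_1,k)_{\la}$ as a submodule, while $E_\infty^{0,3}_{\la}$ is a quotient.

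Next I would compute $E_\infty^{2,1}_{\la}$. The only differentials touching the spot $(2,1)$ are the outgoing $d_2 : E_2^{2,1} \to E_2^{4,0}$ and the incoming $d_2 : E_2^{0,2} \to E_2^{2,1}$; every higher differential into or out of $E_r^{2,1}$ lands in a zero column or in negative bidegree, hence vanishes. Because the outgoing $d_2$ is zero, we obtain
$$
E_\infty^{2,1} = E_3^{2,1} = E_2^{2,1}\big/\operatorname{im}\!\left(d_2 : E_2^{0,2} \to E_2^{2,1}\right).
$$
For $p > 3$, and for $p = 3$ away from the four exceptional weights, the incoming $d_2$ is zero on the relevant weight space, so $E_\infty^{2,1}_{\la} = E_2^{2,1}_{\la} = ((\ul^*)^{(1)}\otimes\opH^1(\ul,k))_{\la}$. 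Combined with the short exact sequence, this yields the asserted containment $\opH^3(U_1,k)_{\la} \supseteq ((\ul^*)^{(1)}\otimes\opH^1(\ul,k))_{\la}$.

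For the four exceptional weights at $p = 3$ I would invoke the fact (noted above) that $\la$ occurs with multiplicity one in each of $\opH^2(\ul,k)$ and $(\ul^*)^{(1)}\otimes\opH^1(\ul,k)$. Thus $d_2 : E_2^{0,2}_{\la} \to E_2^{2,1}_{\la}$ is a non-zero map between one-dimensional spaces, hence surjective, forcing $E_\infty^{2,1}_{\la} = E_2^{2,1}_{\la}/\operatorname{im}(d_2) = 0$. The short exact sequence then collapses to an isomorphism $\opH^3(U_1,k)_{\la} \cong E_\infty^{0,3}_{\la}$. Finally, since the edge term $E^{0,3}$ receives no differentials (every potential incoming one originates in a negative column), $E_\infty^{0,3}$ is obtained from $E_2^{0,3} = \opH^3(\ul,k)$ by successively passing to kernels, so $E_\infty^{0,3}_{\la}$ is a $T$-submodule of $\opH^3(\ul,k)_{\la}$. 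Composing, $\opH^3(U_1,k)_{\la} \into \opH^3(\ul,k)_{\la}$, which is the claimed containment in the exceptional cases.

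I expect the only delicate point to be the bookkeeping of which differentials can affect the entries $E^{2,1}$ and $E^{0,3}$ — in particular confirming that no higher differential ($d_3, d_4, \dots$) survives to modify $E^{2,1}$, and correctly identifying $E_\infty^{0,3}$ as a \emph{submodule}, rather than a quotient, of $\opH^3(\ul,k)$. The genuinely substantive inputs, namely the two $d_2$ computations and the multiplicity-one property at the exceptional weights, are already in hand, so the remaining work is purely organizational.
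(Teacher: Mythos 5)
Your proposal is correct and follows essentially the same route as the paper: the paper derives the proposition from exactly this weight-refined first-quadrant spectral sequence, using the vanishing of $d_2\colon E_2^{2,1}\to E_2^{4,0}$ for $p\geq 3$ and the computation from \cite{BNP2} that $d_2\colon E_2^{0,2}\to E_2^{2,1}$ is non-zero precisely on the four listed multiplicity-one weight spaces when $p=3$. Your bookkeeping of which differentials can touch $E^{2,1}$ and $E^{0,3}$, and the identification of $E_\infty^{2,1}$ as a submodule and $E_\infty^{0,3}$ as a submodule of $\opH^3(\ul,k)$, matches the paper's (more tersely stated) argument.
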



\subsection{} We next turn to the investigation of the term $E_2^{0,3} = \opH^3(\ul,k)$.
In order to determine its contribution to $\opH^3(U_1,k)$, there are
two differentials to consider:
$$
d_2: \opH^3(\ul,k) = E_2^{0,3} \to E_2^{2,2} = (\ul^*)^{(1)}\otimes \opH^2(\ul,k)
$$
and then
$$
d_4: \opH^3(\ul,k) = E_2^{0,3} \to E_{2}^{4,0} = S^2(\ul^*)^{(1)}.
$$
From Corollary \ref{C:h3first}, we know that weights of $\opH^3(\ul,k)$ have the form $\al + \si_1 + \si_2$ for distinct positive roots $\al, \si_1, \si_2$ with $\al$ being simple. 

Consider the differential $d_2$.  On the right hand side, by Theorem \ref{T:h2}, the weights have the form $p\nu + \be + \si_3$ for positive roots $\nu, \be, \si_3$ with $\be$ simple and $\be \neq \si_3$.  To have a non-trivial differential, we must have $\al + \si_1 + \si_2 = p\nu + \be + \si_3$ (with $\nu \neq 0$).  Applying Proposition~\ref{P:d2rootsum}, this is impossible for the primes given therein. 

For those classes that survive to the $E_4$-page, consider now $d_4$.  Here, the image has to have weight of the form $p(\si_3 + \si_4)$ for positive roots $\si_3, \si_4$. In other words, $\al + \si_1 + \si_2 = p(\si_3 + \si_4)$. By Proposition~\ref{P:d4rootsum}, this is not possible for $p \geq 5$. We conclude the following.

\begin{thm}\label{T:utoU1} Let 
\begin{itemize} 
\item[(a)] $p\geq 5$ when $\Phi$ is of types $A_{n}$, $C_{n}$, $D_{n}$, $E_n$,
\item[(b)] $p\geq 7$ when $\Phi$ is of type $B_n$ ($n \geq 3$), $F_4$, or $G_2$. 
\end{itemize} 
Then, as $T$-modules,
$$
\opH^3(U_1,k) \cong \opH^3(\ul,k)\oplus((\ul^*)^{(1)}\otimes \opH^1(\ul,k)).
$$
\end{thm}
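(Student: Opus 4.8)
The plan is to read the decomposition directly off the refined spectral sequence $E_2^{2i,j} = [S^i(\ul^*)^{(1)}\otimes\opH^j(\ul,k)]_\la \Rightarrow \opH^{2i+j}(U_1,k)_\la$, by showing that the two entries on the antidiagonal $2i+j=3$ survive unchanged to $E_\infty$, and then splitting the resulting filtration as $T$-modules. First I would observe that, since the spectral sequence is first quadrant and the odd columns vanish, the abutment $\opH^3(U_1,k)$ carries a $T$-stable filtration whose only possibly nonzero graded pieces are $E_\infty^{0,3}$ and $E_\infty^{2,1}$, arising from $E_2^{0,3}=\opH^3(\ul,k)$ and $E_2^{2,1}=(\ul^*)^{(1)}\otimes\opH^1(\ul,k)$.

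Next I would dispose of the term $E^{2,1}$. The only differentials that can touch this spot are the incoming $d_2\colon E_2^{0,2}\to E_2^{2,1}$ and the outgoing $d_2\colon E_2^{2,1}\to E_2^{4,0}$; every other $d_r$ has its source in a negative column or its target in a negative row. The incoming $d_2$ vanishes for $p>3$ by \cite[Prop.~4.1]{BNP2}, while the outgoing $d_2$ vanishes because, as established in the discussion preceding the theorem, the entire term lies in its kernel (a weight count forces $\al\in pX(T)$, impossible for $p\geq 3$). Hence $E_\infty^{2,1}=E_2^{2,1}$ for all the primes in the statement.

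I would then handle $E^{0,3}$, which is an edge term, so it receives no differentials; the only outgoing maps that can be nonzero are $d_2\colon E_2^{0,3}\to E_2^{2,2}=(\ul^*)^{(1)}\otimes\opH^2(\ul,k)$ and $d_4\colon E_4^{0,3}\to E_4^{4,0}=S^2(\ul^*)^{(1)}$, since $d_3$ lands in an odd column and $d_r$ for $r\geq 5$ lands in a negative row. By Corollary \ref{C:h3first} every weight of $\opH^3(\ul,k)$ has the form $\al+\si_1+\si_2$ with $\al$ simple and $\al,\si_1,\si_2$ distinct positive roots, whereas the weights of $E_2^{2,2}$ have the form $p\nu+\be+\si_3$ with $\be$ simple (using Theorem \ref{T:h2}) and those of $S^2(\ul^*)^{(1)}$ have the form $p(\si_3+\si_4)$. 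A nonzero $d_2$ or $d_4$ would therefore force a root-sum identity with $\nu\neq 0$, which Propositions \ref{P:d2rootsum} and \ref{P:d4rootsum} rule out precisely for the primes listed in (a) and (b). Thus $E_\infty^{0,3}=E_2^{0,3}$.

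Finally I would assemble the pieces: $\opH^3(U_1,k)$ carries a two-step filtration by $T$-submodules with successive quotients $\opH^3(\ul,k)$ and $(\ul^*)^{(1)}\otimes\opH^1(\ul,k)$, and since $T$ is diagonalizable every rational $T$-module is completely reducible, so this filtration splits and yields the asserted direct sum. The genuine content is the vanishing of $d_2$ and $d_4$ out of $\opH^3(\ul,k)$, which rests entirely on the root-sum combinatorics; this is where the prime bounds (a)--(b) enter, so the step I expect to be delicate is matching the exact hypotheses of Propositions \ref{P:d2rootsum} and \ref{P:d4rootsum} to the cases at hand, while the tracking of which spectral sequence differentials can survive is routine bookkeeping.
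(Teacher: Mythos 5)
Your proposal is correct and follows essentially the same route as the paper: the same Friedlander--Parshall spectral sequence, the same identification of the only two relevant entries $E_2^{0,3}$ and $E_2^{2,1}$, the same weight arguments (via Corollary \ref{C:h3first}, Theorem \ref{T:h2}, and Propositions \ref{P:d2rootsum} and \ref{P:d4rootsum}) to kill the outgoing $d_2$ and $d_4$, and the same appeal to complete reducibility of rational $T$-modules to split the filtration. The bookkeeping of which $d_r$ can be nonzero is accurate, and the prime hypotheses are matched to the root-sum propositions exactly as in the paper.
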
 

\begin{rem} In the cases for $p=5$ when $\Phi$ is of type $B_n$ ($n \geq 3$), $F_4$, or $G_2$, one still has 
$\opH^3(U_1,k) \supset (\ul^*)^{(1)}\otimes \opH^1(\ul,k)$, however, $\opH^3(U_1,k)_{\la}$ may not contain $\opH^3(\ul,k)_{\la}$ for the following weights (cf. Remark~\ref{R:d2rootsum}):
\begin{itemize}
\item[(i)] Type $B_n$ ($n \geq 3$): $\la = \al_{n-2} + 2\al_{n-1} + 5\al_n = -s_ns_{n-1}s_{n-2}\cdot 0$, 
\item[(ii)] Type $F_4$: $\la = \al_1 + 2\al_2 + 5\al_3 = -s_3s_2s_1\cdot 0$ or $\la = \al_1 + 3\al_2 + 7\al_3 + 2\al_4$,
\item[(iii)] Type $G_2$: $\la = 6\al_1 + 2\al_2$.  
\end{itemize}
\end{rem}


\section{Utilizing the $B$-cohomology} 


\subsection{} From this point on, unless otherwise specified, we will make the following assumptions on 
the characteristic of the field $k$. 

\begin{assump} \label{A:char} Let $p$ be the characteristic of the field $k$. We impose the condition that 
\begin{itemize} 
\item[(a)] $p\geq 5$ when $\Phi$ is of types $A_{n}$, $C_{n}$, $D_{n}$, or $E_n$,
\item[(b)] $p \neq 5$ when $\Phi$ is of type $A_4$, $p \neq 7$ when $\Phi$ is of type $A_6$,
\item[(c)] $p\geq 7$ when $\Phi$ is of type $B_n$ ($n \geq 3$), $F_4$, or $G_2$. 
\end{itemize} 
\end{assump} 


\subsection{\bf Relating $\ul$ and $B_1$:} The following result relates the $B_{1}$-cohomology with the 
$\ul$-cohomology. 

\begin{prop}\label{P:B1-cohostructure}  Let $p$ satisfy Assumption~\ref{A:char} and $\la \in X(T)$. 
\begin{itemize} 
\item[(a)] As a $T$-module, 
$$
\opH^3(B_1, \la) \cong  \bigoplus_{\nu \in X(T)}   p \nu^{\dim \opH^3(\ul,k)_{-\la +p\nu}+\dim [((\ul^*)^{(1)}\otimes \opH^1(\ul,k)]_{-\la+p\nu}}
$$
\item[(b)] $\opH^{3}(B_{1},k)=0$. 
\end{itemize} 
\end{prop}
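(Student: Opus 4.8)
The plan is to extract $\opH^3(B_1,\la)$ from $\opH^3(U_1,k)$ by taking $T_1$-invariants after twisting by $\la$, and then to translate the $T$-module structure of $\opH^3(U_1,k)$ from Theorem~\ref{T:utoU1} into the stated direct sum. The starting point is the standard identity
\begin{equation*}
\opH^3(B_1,\la) = \opH^3(U_1,\la)^{T_1} = \left(\opH^3(U_1,k)\otimes\la\right)^{T_1},
\end{equation*}
which follows from the Lyndon--Hochschild--Serre spectral sequence for $U_1 \trianglelefteq B_1$ together with the fact that $B_1/U_1 \cong T_1$ is linearly reductive (or equivalently from $B_1 = U_1 \rtimes T_1$ with $T_1$ having trivial higher cohomology). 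Since $T$ acts on everything and all $T$-modules are semisimple, taking $T_1$-invariants amounts to keeping exactly those weight spaces whose weight lies in $pX(T)$; a weight $\mu$ of $\opH^3(U_1,k)$ survives in $\opH^3(B_1,\la)$ precisely when $\mu - \la \in pX(T)$, and then it contributes a copy of the corresponding $p\nu$ as a $B_1/B_1 = T/T_1 \cong T^{(1)}$-weight.

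For part~(a), I would invoke Assumption~\ref{A:char}, which is exactly calibrated so that the hypotheses of Theorem~\ref{T:utoU1} hold (the constraints on $A_4$, $A_6$ in parts (b) of the assumption are the ones needed for the root-sum propositions feeding into that theorem). Thus as $T$-modules
\begin{equation*}
\opH^3(U_1,k) \cong \opH^3(\ul,k) \oplus \left((\ul^*)^{(1)}\otimes\opH^1(\ul,k)\right).
\end{equation*}
Tensoring with $\la$ shifts all weights by $\la$, and extracting the $T_1$-invariants selects the weight spaces indexed by $\nu \in X(T)$ with weight $-\la + p\nu$ on the cohomology side (the sign matching the convention that $\la$ is the one-dimensional $B$-module). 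Reading off the multiplicity of each $p\nu$ as the sum of the dimensions of the two weight spaces $\opH^3(\ul,k)_{-\la+p\nu}$ and $[(\ul^*)^{(1)}\otimes\opH^1(\ul,k)]_{-\la+p\nu}$ yields the formula exactly as stated.

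For part~(b), set $\la = 0$, so the relevant weights are those lying in $pX(T)$. The summand $\opH^3(\ul,k)$ contributes nothing: by Corollary~\ref{C:h3first} every weight of $\opH^3(\ul,k)$ is a sum $\al+\si_1+\si_2$ of three distinct positive roots with $\al$ simple, and Proposition~\ref{P:d4rootsum} (applicable since $p\geq 5$ under Assumption~\ref{A:char}) shows such a weight never lies in $pX(T)$. For the second summand, the weights have the form $p\si + \al$ with $\si \in \Phi^+$ and $\al \in \Pi$; such a weight lies in $pX(T)$ only if $\al \in pX(T)$, which is impossible for a simple root when $p\geq 5$. Hence no weight of $\opH^3(U_1,k)$ lies in $pX(T)$, the $T_1$-invariants vanish, and $\opH^3(B_1,k)=0$. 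The main obstacle is simply making the $T_1$-invariants/weight-matching bookkeeping precise and confirming that Assumption~\ref{A:char} genuinely supplies every numerical hypothesis demanded by Theorem~\ref{T:utoU1} and the cited root-sum results; the cohomological input is otherwise entirely furnished by the prior statements.
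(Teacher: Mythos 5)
Your argument is correct and follows the paper's proof essentially verbatim: part (a) is the identification $\opH^3(B_1,\la)\cong(\opH^3(U_1,k)\otimes\la)^{T_1}$ (the paper cites the analogous \cite[Prop. 4.2]{BNP2} for this step) combined with Theorem~\ref{T:utoU1}, and part (b) is exactly the paper's appeal to Corollary~\ref{C:h3first} and Proposition~\ref{P:d4rootsum} together with the observation that the weights $p\si+\al$ of the second summand cannot lie in $pX(T)$. The only blemish is the passing phrase ``$\mu-\la\in pX(T)$,'' which should read $\mu+\la\in pX(T)$, but your subsequent identification of the surviving weights as $-\la+p\nu$ is the correct one and matches the stated formula.
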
 

\begin{proof}  (a) The same argument as in \cite[Prop. 4.2]{BNP2} gives that
$$
\opH^3(B_1,\la) \cong \bigoplus_{\nu \in X(T)}p\nu^{\dim\opH^3(U_1,k)_{-\la + p\nu}}.
$$
The claim now follows from Theorem~\ref{T:utoU1}. 

(b) We have from Theorem~\ref{T:utoU1} that $\opH^{3}(B_{1},k)\cong \opH^{3}(U_{1},k)^{T_{1}}\cong \opH^{3}(\ul,k)^{T_{1}}$. 
By Corollary~\ref{C:h3first} and Proposition~\ref{P:d4rootsum}, $\opH^3(\ul,k)^{T_1} = 0$. 
\end{proof}

\begin{rem} The proposition holds also for types $A_4$ and $A_6$ for all $p \geq 5$.
\end{rem}


\subsection{\bf $\mathfrak u$-cohomology:} In \cite{BNP2}, the authors made use of known
information on $\opH^j(B,\la)$ $j=0,1,2$ in order to compute $\opH^2(\ul,k)$. This strategy will be used in the following theorem. Information about 
$\opH^{j}(B,k)$ for $j=0,1,2,3$ will be used to determine $\opH^{3}({\mathfrak u},k)$. 
The following theorem extends the work of \cite{PT, FP2, UGA} which required $p \geq h - 1$. 

\begin{thm} \label{T:u-coho} Let $p$ be an odd prime.  In addition, assume that $p \geq 5$ when $\Phi$ is of type $A_n$ ($n \geq 4$), $B_3$, $C_n$ ($n \geq 3$), $D_n$, $E_n$, and $G_2$, and that $p \geq 7$ when $\Phi$ is of type $B_n$ ($n \geq 4$) and $F_4$. 
\begin{itemize} 
\item[(a)] If $\ga \in X(T)$ is a weight of $\opH^3(\ul,k)$, then $\ga = -w\cdot 0$ for
some $w \in W$ with $l(w) = 3$. 
\item[(b)] As a $T$-module,
$$
\opH^3(\ul,k) \cong \bigoplus_{w\in W,\ l(w)=3} -w\cdot 0
$$
\end{itemize}
\end{thm}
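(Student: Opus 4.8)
The plan is to obtain (b) as a formal consequence of (a), so I would first record why (a) suffices. Each $-w\cdot 0$ with $\ell(w)=3$ is represented by the explicit element of Corollary~\ref{C:h3wdot0}; since the dimension formula for $\Lambda^m(\ul^*)_{-w\cdot 0}$ gives $\dim_k\Lambda^2(\ul^*)_{-w\cdot 0}=0$, that cocycle cannot be a coboundary and hence is a nonzero class, while the same formula forces $\dim_k\opH^3(\ul,k)_{-w\cdot 0}=1$. Because $\rho$ is regular the dot action is free, so $w\mapsto -w\cdot 0$ is injective and distinct length-three elements contribute distinct weights. Feeding in (a), which asserts that no other weights occur, then yields $\opH^3(\ul,k)\cong\bigoplus_{\ell(w)=3}(-w\cdot 0)$.

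For (a) I would start from Corollary~\ref{C:h3first}: any weight is $\ga=\al+\si_1+\si_2$ with $\al,\si_1,\si_2\in\Phi^+$ distinct, $\al\in\Pi$, subject to the nesting constraints on $(\al,\be,\de)$ recorded there. The natural split is by the number of simple roots in the support of $\ga$. If $\ga$ is supported on at most three simple roots, then every $\phi_\eta$ appearing in a representing cocycle is indexed by a root of the rank $\le 3$ subsystem generated by that support, and the differential is computed entirely inside that subsystem by Proposition~\ref{P:rank3}; the complete analysis of $\opH^3$ for rank $\le 3$ systems carried out in the proofs of Propositions~\ref{P:dotthree} and~\ref{P:dottwo} then identifies $\ga$ with some $-w\cdot 0$ of length three.

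The crux is to eliminate weights $\ga$ whose support meets four or more simple roots, since such $\ga$ can never equal a $-w\cdot 0$ of length three. Here I would follow the $B$-cohomology strategy of \cite{BNP2}: a nonzero class in $\opH^3(\ul,k)_\ga$ yields, through Theorem~\ref{T:utoU1} and Proposition~\ref{P:B1-cohostructure}, a class of internal weight $0$ in $\opH^3(B_1,-\ga)$, which I would track in the Lyndon--Hochschild--Serre spectral sequence
$$
E_2^{i,j}=\opH^i(B/B_1,\opH^j(B_1,-\ga))\Rightarrow\opH^{i+j}(B,-\ga).
$$
The class lies in $E_2^{0,3}$, and its three possible outgoing differentials target $E_2^{2,2}$, $E_3^{3,1}$ and $E_4^{4,0}$, governed respectively by $\opH^2(\ul,k)$, $\opH^1(\ul,k)$ and $\opH^0(B_1,-\ga)$. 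The root-sum results are tailored precisely to annihilate these: Proposition~\ref{P:d2rootsum} rules out $\al+\si_1+\si_2=\be+\si_3+p\nu$ with $\nu\neq 0$, Corollary~\ref{C:rootsum-simple} rules out $\al+\si_1+\si_2=\be+p\nu$, and Proposition~\ref{P:d4rootsum} rules out $\ga\in pX(T)$, while $\opH^2(\ul,k)_\ga=\opH^1(\ul,k)_\ga=0$ since those weights involve at most two simple roots. The class therefore survives to $E_\infty^{0,3}$ and descends to $\opH^3(B,-\ga)$, and I would reach a contradiction from the known vanishing behaviour of $B$-cohomology, using the determination of $\opH^j(B,\la)$ for $j\le 2$ together with the constraints available on $\opH^3(B,\cdot)$.

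The hard part is exactly this last elimination: making precise that a surviving $E_\infty^{0,3}$-contribution from a four-simple-root weight is incompatible with the $B$-cohomology, and checking type by type that the spectral-sequence differentials really are killed. This is where the hypotheses on $p$ are forced, since the root-sum propositions break down at $p=3$ and, in types $B_n$ $(n\ge 3)$, $F_4$ and $G_2$, also at $p=5$ (the exceptional solutions of Remarks~\ref{R:d2rootsum} and~\ref{R:rootsum-triple}), which is precisely why $p\ge 7$ is imposed there; the borderline cases $A_4$ $(p=5)$ and $A_6$ $(p=7)$ must be inspected directly, and by Remark~\ref{R:rootsum-simple} their only root-sum coincidences are themselves genuine $-w\cdot 0$ weights, so they contribute no spurious cohomology.
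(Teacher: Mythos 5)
Your architecture matches the paper's up to the final step, and that final step has a genuine gap. The decisive difficulty is not ruling out weights supported on four or more simple roots per se; it is forcing \emph{every} weight $\ga$ of $\opH^3(\ul,k)$ into the very specific shape $i_1\be_1+i_2\be_2+i_3\be_3$ with $i_3\in\{0,1\}$, which is the hypothesis of Propositions~\ref{P:dotthree} and~\ref{P:dottwo}. Those two propositions do \emph{not} give "the complete analysis of $\opH^3$ for rank $\le 3$ systems": they only treat weights already of that shape, and a weight supported on three simple roots (say $2\be_1+2\be_2+2\be_3$, or anything with all coefficients $\ge 2$) is not covered by them. So your "support $\le 3$" branch does not close. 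Your "support $\ge 4$" branch fails for a different reason: after the class survives to $E_\infty^{0,3}$ you propose to contradict "the known vanishing behaviour of $B$-cohomology ... together with the constraints available on $\opH^3(B,\cdot)$", but no such constraints are available at this point. The computation of $\opH^3(B,\la)$ is Theorem~\ref{T:Bfinal} of this paper and is itself a consequence of the present theorem; the only prior result is Andersen--Rian's, valid for $p>h$, which does not cover the primes in the statement. The missing idea is Andersen's reduction \cite[2.9]{And}: for a suitable simple root $\be_1$ one has $\opH^{3}(B,-\ga+p\nu)\cong \opH^{2}(B,\opH^{1}(P_{\be_1}/B,-\ga+p\nu))$, so the nonvanishing you have produced forces some weight $\mu=-\ga+p\nu+i_1\be_1$ of $\opH^1(P_{\be_1}/B,-\ga+p\nu)$ to satisfy $\opH^2(B,\mu)\neq 0$; the degree-two computation of \cite{BNP2} then pins $\mu$ down to $-i_2\be_2-i_3\be_3$ with $i_3$ a power of $p$, giving $\ga=i_1\be_1+i_2\be_2+i_3\be_3+p\nu$, after which Proposition~\ref{P:rootsum-triple} kills $\nu$ and Propositions~\ref{P:dotthree} and~\ref{P:dottwo} finish. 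This single argument handles all supports uniformly and is what actually forces the $i_3\le 1$ normalization; without it neither of your branches terminates.

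Two smaller points. First, your spectral sequence should be twisted: a nonzero weight-$0$ vector of $\opH^3(B_1,-\ga)$ need not be $B/B_1$-invariant, so $E_2^{0,3}=\Hom_{B/B_1}(k,\opH^3(B_1,-\ga))$ could vanish even though $\opH^3(B_1,-\ga)\neq 0$; the paper instead takes the $B/B_1$-socle, i.e.\ works with $\Ext^i_{B/B_1}(-p\nu,\opH^j(B_1,-\ga))\Rightarrow\opH^{i+j}(B,-\ga+p\nu)$ and concludes nonvanishing of $\opH^3(B,-\ga+p\nu)$ for \emph{some} $\nu$. Second, your derivation of (b) from (a) is correct and is essentially the paper's implicit argument, and your identification of where the hypotheses on $p$ enter (the root-sum propositions and the exceptional solutions in types $B_n$, $C_n$, $F_4$, $G_2$, plus the $A_4$, $A_6$ borderline cases) is accurate; note only that the paper also disposes of the residual type $C_n$, $p=5$ failures of Proposition~\ref{P:rootsum-triple} by observing that all exceptional weights live in a $C_4$ subsystem where the theorem is checked directly.
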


\begin{proof} By direct computation of $\opH^3(\ul,k)$, with the aid of MAGMA \cite{BC, BCP}, (or by \cite{PT} when $p \geq h - 1$) the theorem
can be verified for types $A_1$, $A_2$, $A_3$, and $B_2$ (or $C_2$) when $p = 3$,  type $A_4$ when $p = 5$, type $A_6$ when $p = 7$, type $F_4$ when $p \geq 7$, and type $G_2$ when $p \geq 5$.  We exclude those cases for the remainder of the proof.
Assume also for the moment that $\Phi$ is not of type $C_n$ ($n \geq 3$) when $p = 5$. 

Let $\ga$ be a weight of $\opH^3(\ul,k)$. Then $\ga= \al + \si_{1}+\si_{2}$ where $\al \in \Pi$ and $\si_{1},\si_{2}\in \Phi^{+}$ by Corollary~\ref{C:h3first}. Now by Proposition~\ref{P:B1-cohostructure}, we have that $\opH^{3}(B_{1},-\gamma)\neq 0$. 
Consider the Lyndon-Hochschild-Serre (LHS) spectral sequence applied to 
$B_{1}\unlhd B$ with $-\gamma+p\nu\in X(T)$:
$$E_{2}^{i,j}=\text{Ext}^{i}_{B/ B_{1}}(-p\nu,\opH^{j}(B_{1},-\ga))
\Rightarrow \opH^{i+j}(B,-\ga+p\nu).$$

First observe that $-\ga\notin pX(T)$ by Proposition~\ref{P:B1-cohostructure}, 
thus $E_{2}^{i,0}=0$ for $i\geq 0$. Next suppose that $E_{2}^{i,1}\neq 0$ for some $i\geq 0$. Then $\opH^{1}(B_{1},-\gamma)\neq 0$.
By \cite{Jan2}, $\gamma=p\sigma+\delta$ for some $\delta\in \Pi$ and $\si \in X(T)$. By Corollary~\ref{C:rootsum-simple}, 
this is not possible.  Thus 
$E_{2}^{i,1}=0$ for $i\geq 0$. Finally, by \cite[Thm 5.3]{BNP2}, if $\opH^{2}(B_{1},-\gamma)\neq 0$,
then $\gamma = -w\cdot 0 + p\sigma$ where $l(w)=2$ and $\si \in X(T)$.  From Proposition~\ref{P:wdotzero},
$-w\cdot 0 = c\de_1 + \de_2$ for $\de_i \in \Pi$ and $c \geq 1$.   By Proposition~\ref{P:rootsum-triple}, we must have
$\si = 0$, and so $\ga = c\de_1 + \de_2$, which is not possible. Therefore, $E_{2}^{i,2}=0$ for $i\geq 2$. 
We can now conclude that 
\begin{equation} 
\opH^{3}(B,-\gamma+p\nu)\cong E_{2}^{0.3} \cong \text{Hom}_{B/B_{1}}(-p\nu,\opH^{3}(B_{1},-\gamma))) \neq 0
\end{equation} 
for some $\nu\in X(T)$. 

From the proof in \cite[2.9]{And} there exists a simple root $\be_1$ such that 
\begin{equation*} 
0\neq \opH^{3}(B,-\gamma+p\nu)\cong \opH^{2}(P_{\be_1},\opH^{1}(P_{\be_1}/B,-\gamma+p\nu))\cong 
\opH^{2}(B,\opH^{1}(P_{\be_1}/B,-\gamma+p\nu)), 
\end{equation*}
where $P_{\be_1} \supset B$ is the standard parabolic subgroup associated to $\beta_1$.  
Therefore, there exists a weight $\mu$ of $\opH^{1}(P_{\beta_1}/B,-\gamma+p\nu))$ with $\opH^{2}(B,\mu)\neq 0$. 
According to \cite[Thm. 5.8(a)]{BNP2}, $\mu=-i_{2}\be_{2}-i_{3}\be_{3}$ for some $\be_2, \be_3 \in \Pi$ where 
$i_{2}, i_{3}\geq 0$ and without loss of generality $i_3 = p^l$ for $l \geq 0$.    
On the other hand, $\mu=-\gamma+p\nu+i_{1}\beta_{1}$ for $i_{1}\geq 0$ by \cite[(2.8)]{And}. Therefore, 
$$\gamma=i_{1}\beta_{1}+i_{2}\beta_{2}+i_{3}\beta_{3}+p\nu.$$ 
If any $i_j \geq p$, the $p$-portion can be combined with the $p\nu$ term to re-express this as
$$\gamma = i_1'\beta_1 + i_2'\be_2 + i_3'\be_3 + p\nu'$$
where $0 \leq i_j' \leq p - 1$ for $j \in \{1,2\}$ and $0 \leq i_3' \leq 1$.  
By Proposition~\ref{P:rootsum-triple}, $\nu' = 0$.  
From Proposition~\ref{P:dotthree} and Proposition~\ref{P:dottwo}, it follows that $\ga=-w\cdot 0$ with $l(w)=3$. 

Lastly, we need to consider the case of type $C_n$ with $p = 5$.  In this setting, Proposition~\ref{P:rootsum-triple},
which has been applied twice, might fail to hold.  Then $\ga$ would need to be 
one of the weights listed in  
Remark~\ref{R:rootsum-triple}.  As noted there, none of those weights have the form $-w\cdot 0$ with $\ell(w) = 3$, 
and they all lie in a type $C_4$ root subsystem.   By direct calculation, the theorem holds for type $C_4$ (or $C_3$).  
In particular, none of the weights from Remark~\ref{R:rootsum-triple} can appear in $\opH^3(\ul,k)$.  Since they do not 
appear in type $C_4$, they also cannot appear as weights of $\opH^3(\ul,k)$ for any $C_n$ for $n \geq 4$.  Hence, the claim holds.
\end{proof} 

\begin{rem}\label{R:u-coho} For odd primes, the conditions on the prime given in Theorem \ref{T:u-coho} are necessary and sufficient to obtain the analogue of Kostant's Theorem in positive characteristic.  Specifically, one can show by direct calculation that there exist ``extra'' cohomology classes in $\opH^3(\ul,k)$ when $p = 3$ in types $A_4$, $D_4$, and $G_2$, .  Such classes will further give rise to ``extra'' cohomology classes in types $A_n$ and $D_n$ when $n > 4$ and types $E_n$.  Similarly, when $p = 3, 5$, there exist ``extra'' cohomology classes in type $B_n$ ($n \geq 4)$ and $F_4$.
\end{rem}


\subsection{\bf Other $u$-cohomology calculations:} In this section, we prove some results about $\opH^i(\ul,\ul^*)$, $i = 0, 1$, that will be used to compute $B_{r}$-cohomology.

\begin{prop} \label{P:Bcoho-ucoho} Let $p$ satisfy Assumption~\ref{A:char}. 
\begin{itemize} 
\item[(a)] $\dim \opH^{0}({\mathfrak u},{\mathfrak u}^{*})_{\la}=
\begin{cases} 
1 & \text{if } \la = \alpha,\; \alpha \in \Pi, \\
0 & \text{else. }
\end{cases}$ 
\item[(b)] Let $\la \in X(T)$.  If $\opH^1(\ul,\ul^*) \neq 0$, then $\la = \al + \be$ where $\al \in \Pi$ and $\be \in \Phi^+$.
\item[(c)] As $T$-modules, $\opH^{1}(U_{1},{\mathfrak u}^{*}) \cong \opH^{1}({\mathfrak u},{\mathfrak u}^{*})$.
\item[(d)] Let $\la \in X(T)$.  If $\lambda\neq \alpha+\beta$ where $\alpha\in \Pi$ and $\beta\in \Phi^{+}$, then 
$$\opH^{1}(B/U_{1},\operatorname{Hom}_{U_{1}}(k,{\mathfrak u}^{*}\otimes -\lambda))\cong 
\opH^{1}(B,{\mathfrak u}^{*}\otimes -\lambda).$$
\item[(e)] If $\lambda=\alpha+\beta$ where $\alpha\in \Pi$ and $\beta\in \Phi^{+}$, then 
$$\opH^{1}({\mathfrak u},{\mathfrak u}^{*})_{\lambda}\cong \opH^{1}(B,{\mathfrak u}^{*}\otimes -\lambda).$$
\end{itemize} 
\end{prop}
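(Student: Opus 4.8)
The plan is to handle the five parts in sequence, using the explicit negative-root-vector description of $\ul^{*}$ (so that each weight space $(\ul^{*})_{\mu}$ is one-dimensional, spanned by $\phi_{\mu}$ for $\mu\in\Phi^{+}$) together with the cohomological machinery already in place. For (a), I would first identify $\opH^{0}(\ul,\ul^{*})$ with the coadjoint invariants $(\ul^{*})^{\ul}$. A functional $\phi$ is invariant exactly when $\phi([x,y])=0$ for all $x,y\in\ul$, so $(\ul^{*})^{\ul}=(\ul/[\ul,\ul])^{*}=\ker d_{1}=\opH^{1}(\ul,k)$. Under Assumption~\ref{A:char}, Proposition~\ref{P:h1} says $\opH^{1}(\ul,k)$ has $T$-basis $\{\phi_{\al}:\al\in\Pi\}$, and part (a) is immediate.

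For (b), I would argue directly on cochains. A weight-$\la$ element of $\ul^{*}\otimes\ul^{*}$ already forces $\la$ to be a sum of two positive roots; the content is that one summand can be taken simple. Since weight spaces of $\ul^{*}$ are one-dimensional, a weight-$\la$ derivation $f\colon\ul\to\ul^{*}$ is given by scalars $c_{\ga}$ with $f(x_{-\ga})=c_{\ga}\phi_{\la-\ga}$, where $c_{\ga}=0$ unless $\la-\ga\in\Phi^{+}$. Let $\ga_{0}$ have minimal height among $\ga$ with $c_{\ga}\neq0$. If $\ga_{0}$ were not simple, write $\ga_{0}=\al+\ga_{0}'$ with $\al\in\Pi$ and $\ga_{0}'\in\Phi^{+}$; evaluating the cocycle identity on $x_{-\al}$ and $x_{-\ga_{0}'}$, and using minimality (so $f$ vanishes on both, as $\al$ and $\ga_{0}'$ have smaller height), forces $N_{-\al,-\ga_{0}'}\,c_{\ga_{0}}=0$. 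Since the structure constant is nonzero modulo $p$ under the hypotheses, this gives $c_{\ga_{0}}=0$, a contradiction; hence $\ga_{0}\in\Pi$ and $\la=\ga_{0}+(\la-\ga_{0})$ has the asserted form. The crux here is precisely the nonvanishing of the structure constant, which is what Assumption~\ref{A:char} is designed to guarantee.

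For (c), I would use the coefficient version of the Lie-algebra-to-$U_{1}$ spectral sequence, $E_{2}^{2i,j}=S^{i}(\ul^{*})^{(1)}\otimes\opH^{j}(\ul,\ul^{*})\Rightarrow\opH^{2i+j}(U_{1},\ul^{*})$. Only $E_{2}^{0,1}=\opH^{1}(\ul,\ul^{*})$ contributes in total degree $1$, so it suffices to kill $d_{2}\colon\opH^{1}(\ul,\ul^{*})\to(\ul^{*})^{(1)}\otimes\opH^{0}(\ul,\ul^{*})$. By (b) the source has weights $\al+\be$ with $\al\in\Pi$, while by (a) the target has weights $p\ga+\de$ with $\ga\in\Phi^{+}$, $\de\in\Pi$; an equality $\al+\be=\de+p\ga$ is excluded by Lemma~\ref{L:rootsum} (taking the second root to be $0$). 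Thus $d_{2}=0$ and $\opH^{1}(U_{1},\ul^{*})\cong\opH^{1}(\ul,\ul^{*})$ as $T$-modules.

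For (d) and (e), I would run the Lyndon--Hochschild--Serre spectral sequence for $U_{1}\lhd B$ with coefficients $M=\ul^{*}\otimes(-\la)$, whose five-term sequence reads
\[
0\to\opH^{1}(B/U_{1},M^{U_{1}})\to\opH^{1}(B,M)\to\big(\opH^{1}(U_{1},\ul^{*})\otimes(-\la)\big)^{B/U_{1}}\xrightarrow{d_{2}}\opH^{2}(B/U_{1},M^{U_{1}}),
\]
where $M^{U_{1}}=\opH^{0}(\ul,\ul^{*})\otimes(-\la)$. In case (d), $\la$ is not of the form $\al+\be$, so by (b)--(c) it is not a weight of $\opH^{1}(U_{1},\ul^{*})$; hence the third term has no weight-zero vector, vanishes, and the first arrow becomes the asserted isomorphism. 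In case (e) I would instead show the first term vanishes: its weights are $\de-\la$ with $\de\in\Pi$, and a weight computation for $B/U_{1}$-cohomology (governed by the weights $p^{i}\al$, $\al\in\Pi$, $i\geq1$, of $\opH^{1}(U^{(1)},k)$) forces $\la=\de+p^{i}\al'$, which the support and coefficient bounds preclude. It then remains to identify the third term with $\opH^{1}(\ul,\ul^{*})_{\la}$: its weight-zero subspace is $\opH^{1}(U_{1},\ul^{*})_{\la}\cong\opH^{1}(\ul,\ul^{*})_{\la}$ by (c), and one must check these vectors are genuinely $B/U_{1}$-invariant (not merely $T$-invariant) and that $d_{2}=0$. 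Both reduce to ruling out $\al'+\be'=\la+p\theta$ with $\theta\in{\mathbb Z}\Phi^{+}$ nonzero, which is exactly what Propositions~\ref{P:d2rootsum} and~\ref{P:rootsum-triple} provide. I expect the genuine difficulty to be concentrated in this final step --- controlling the $U^{(1)}$-action on $U_{1}$-cohomology and the edge differential --- whereas (a)--(c) are comparatively formal.
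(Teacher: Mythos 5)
Your route through the five parts coincides with the paper's: (a) via the $\ul$-invariants of $\ul^{*}$ and Proposition~\ref{P:h1}; (b) via the description of $\opH^{1}$ by derivations (your minimal-height cocycle argument is just a more explicit version of the paper's remark that a derivation is determined by its values on the generators of $\ul$ --- both rest on the structure constants being units mod $p$, which holds under Assumption~\ref{A:char}); (c) via the Friedlander--Parshall spectral sequence with coefficients in $\ul^{*}$; and (d)--(e) via the five-term sequence of the LHS spectral sequence for $U_{1}\unlhd B$, including the final check that $T$-invariance can be upgraded to $B/U_{1}$-invariance, which you correctly reduce to Proposition~\ref{P:d2rootsum}. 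Parts (a), (b), (d), (e) are sound as written.

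The one genuine problem is the weight exclusion in (c). You correctly take the target of $d_{2}$ to be $(\ul^{*})^{(1)}\otimes\opH^{0}(\ul,\ul^{*})$ (the paper's own write-up drops the $\opH^{0}$ factor), so the equation to rule out is $\al+\be=\de+p\ga$ with $\al,\de\in\Pi$, $\be,\ga\in\Phi^{+}$. But you justify this by Lemma~\ref{L:rootsum} with one root set to zero, and the prime hypotheses of that lemma are strictly stronger than Assumption~\ref{A:char}: it demands $p\geq 7$ in types $C_{n},D_{n}$, $p\geq 11$ in $E_{6},E_{7},F_{4},G_{2}$, and $p\geq 17$ in $E_{8}$, whereas the proposition is asserted for $p\geq 5$ in types $C_{n},D_{n},E_{n}$ and $p\geq 7$ in $F_{4},G_{2}$. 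For $E_{8}$ with $p=5$, for instance, the cited lemma gives you nothing. The exclusion is nevertheless true under Assumption~\ref{A:char}: it is essentially \cite[Prop.~3.1(B)]{BNP2}, which the paper invokes for the identical equation $\al+\be=\ga+p\nu$ in the proof of parts (d) and (e) (together with the easy case $\al=\be$, where $\be=p\ga$ is absurd). Alternatively one can argue directly: if $\al\neq\de$, the coefficient of $\al$ in $\be$ must be congruent to $-1$ modulo $p$, hence at least $p-1\geq 4$, which confines the problem to $E_{7},E_{8}$ (and $F_{4}$, excluded by the $p\geq 7$ hypothesis there), where a short check of the possible coefficient vectors eliminates the remaining cases. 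So the step needs a different citation or this short supplementary argument; as written it does not cover all primes allowed by Assumption~\ref{A:char}.
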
 

\begin{proof}  Part (a) follows from the fact that with our assumptions on $p$ the $\ul$-socle of ${\mathfrak u}^{*}$ has basis $\{\phi_{\alpha}:\alpha\in \Pi\}$. For part (b), we recall the fact that
$$\opH^{1}({\mathfrak u},{\mathfrak u}^{*})\cong \text{Der}({\mathfrak u},{\mathfrak u}^{*})/\text{Inn}({\mathfrak u},{\mathfrak u}^{*})$$ 
where $\text{Der}({\mathfrak u},{\mathfrak u}^{*})$ (resp. $\text{Inn}({\mathfrak u},{\mathfrak u}^{*})$) are both $T$-modules. 
Since a derivation is completely determined by its values on the generators of ${\mathfrak u}$, if $\opH^{1}({\mathfrak u},{\mathfrak u}^{*})_{\lambda}\neq 0$ then $\lambda=\alpha+\beta$ 
where $\alpha\in \Pi$ and $\beta\in \Phi^{+}$.  

For part (c), there exists a spectral sequence when $p\geq 3$ (cf.~\cite{FP2}, \cite{AJ}, \cite[I.9.20]{Jan1}): 
$$E_{2}^{2i,j}=S^{i}({\mathfrak u}^{*})^{(1)}\otimes \opH^{j}({\mathfrak u},{\mathfrak u}^{*})\Rightarrow \opH^{2i+j}(U_{1},{\mathfrak u}^{*}).$$ 
We show that the differential $d_{2}:\opH^{1}({\mathfrak u},{\mathfrak u}^{*})\rightarrow ({\mathfrak u}^{*})^{(1)}$ is zero. 
As shown above, any weight of $\opH^{1}({\mathfrak u},{\mathfrak u}^{*})$ can be expressed as $\alpha+\beta$ where $\alpha\in \Pi$ and $\beta\in \Phi^{+}$. 
For the differential $d_{2}$ to be non-zero, there would need to be a $\gamma\in \Phi^{+}$ such that 
$\alpha+\beta=p\gamma$. It was shown in \cite[Prop. 3.1(A)]{BNP2} (under conditions weaker than Assumption~\ref{A:char}) that this is not possible if $\al \neq \be$.  However, it is straightforward to see that this is 
also impossible when $\al = \be$, particularly as $\al$ is simple.  
Therefore, $\opH^{1}(U_{1},{\mathfrak u}^{*})\cong \opH^{1}({\mathfrak u},{\mathfrak u}^{*})$. 

For parts (d) and (e), apply the LHS spectral sequence for $U_{1}\unlhd B$ and $\la \in X(T)$: 
\begin{equation} 
E_{2}^{i,j}=\opH^{i}(B/U_{1},\opH^{j}(U_{1},{\mathfrak u}^{*}\otimes -\lambda))\Rightarrow \opH^{i+j}(B,{\mathfrak u}^{*}\otimes -\lambda).
\end{equation} 

First observe that, if $\lambda\neq \alpha+\beta$ where $\alpha\in \Pi$ and $\beta\in \Phi^{+}$, then (using part (b))
$$
E_{2}^{0,1}=\text{Hom}_{B/U_{1}}(\lambda,\opH^{1}(U_{1},{\mathfrak u}^{*}))
\cong \Hom_{B/U_1}(\la,\opH^1(\ul,\ul^*))=0.
$$
Therefore, 
$E_{2}^{1,0} \cong \opH^1(B,\ul\otimes -\la)$ which proves (d). 

Using the fact that $U/U_{1} \unlhd B/U_{1}$ with $(B/U_{1})/(U/U_{1})\cong T$, one has 
\begin{eqnarray*} 
E_{2}^{i,0}&=& \opH^{i}(B/U_{1},\opH^0(U_1,{\mathfrak u}^{*}\otimes -\lambda)) \\
                &\cong & \opH^{i}(U/U_{1},\opH^0(U_1,{\mathfrak u}^{*}\otimes -\lambda))^{T} \\
               &\cong &[\opH^{i}(U/U_{1},k) \otimes \opH^0(U_1,{\mathfrak u}^{*})\otimes -\lambda]^{T} \\
   &\cong &[\opH^{i}(U,k)^{(1)} \otimes \opH^0(\ul,{\mathfrak u}^{*})\otimes -\lambda]^{T}.
\end{eqnarray*} 
By part (a), if $E_{2}^{1,0}\neq 0$, then there exists $\nu\neq 0$ with $p\nu+\gamma-\la=0$ for some 
$\gamma\in \Pi$.   Suppose that $\lambda= \alpha+\beta$ for $\alpha\in \Pi$ and $\beta\in \Phi^{+}$.
Then $\la = \al + \be = \ga + p\nu$. If $\al \neq \be$, this cannot happen under Assumption~\ref{A:char} by \cite[Prop. 3.1(B)]{BNP2}. If $\al = \be \in \Pi$, one can argue in a similar manner that this is impossible.
Therefore, $\opH^1(B,\ul\otimes -\la) \cong E_{2}^{0,1} \cong \Hom_{B/U_1}(k,\opH^1(\ul,\ul^*)\otimes -\la)$ for such $\la$. To finish off the proof, we need to show that $\opH^{1}({\mathfrak u},{\mathfrak u}^{*})$ 
is semisimple as $B/U_{1}$-module. Observe that, for any $\si \in X(T)$,
$$\text{Ext}^{1}_{B/U_{1}}(k,\sigma)\cong \opH^{1}(U/U_{1},\sigma)^{T}\cong [\opH^{1}(U/U_{1},k)\otimes \sigma]^{T}
\cong [\opH^1(U,k)^{(1)}\otimes\si]^{T}.$$
Consequently $\text{Ext}^{1}_{B/U_{1}}(k,\sigma)\neq 0$ implies that $\sigma=-p^{l}\ga$ where $l>0$ and $\ga \in {\mathbb Z}\Phi$. Suppose 
we have two one-dimensional representations in $\opH^{1}({\mathfrak u},{\mathfrak u}^{*})$ which extend one another. 
Viewed as weights, let these be represented by $\de_1 + \be_1$ and $\de_2 + \be_2$ for $\de_1, \de_2 \in \Pi$ and
$\be_1, \be_2 \in \Phi^+$.    Then $\de_{1}+\beta_{1}=\de_{2}+\beta_{2}+p\nu$ with $\nu \in {\mathbb Z}\Phi$ and $\nu\neq 0$. We can rule this out by using Proposition~\ref{P:d2rootsum} (with $\al = \de_1$, $\si_1 = \be_1$, and $\si_2 = 0$).  

\end{proof}

\begin{thm}\label{T:otherucoho} Let $p$ satisfy Assumption~\ref{A:char}. Then 

$$\dim \opH^{1}({\mathfrak u},{\mathfrak u}^{*})_{\la}=
\begin{cases} 
2 & \text{if } \la = \alpha + \beta,\; \alpha, \beta \in \Pi, \al \neq \be, \alpha + \beta \notin \Phi^{+}, \\
1 & \text{if } \la = \alpha + \beta,\; \alpha, \beta \in \Pi, \alpha + \beta \in \Phi^{+}, \\
1 & \text{if } \la = 2\alpha,\; \al \in \Pi,\\
1 & \text{if } \la = -s_{\alpha} s_{ \beta}\cdot 0, \; \alpha, \beta \in \Pi, \alpha + \beta \in \Phi^{+}, \\
0 & \text{else. }
\end{cases}$$ 
\end{thm}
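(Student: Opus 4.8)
The plan is to push everything through the comparison maps of Proposition~\ref{P:Bcoho-ucoho} and then appeal to the known one‑dimensional $B$‑cohomology. By Proposition~\ref{P:Bcoho-ucoho}(b), every nonzero weight of $\opH^1(\ul,\ul^*)$ has the form $\la=\al+\be$ with $\al\in\Pi$ and $\be\in\Phi^+$; this already establishes the final (``else'') line, so it remains to compute the dimension for each such $\la$. For these weights Proposition~\ref{P:Bcoho-ucoho}(e) gives $\opH^1(\ul,\ul^*)_\la\cong\opH^1(B,\ul^*\otimes-\la)$, and the whole statement reduces to evaluating this $B$‑cohomology group weight by weight.

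To compute $\opH^1(B,\ul^*\otimes-\la)$ I would filter $\ul^*$ as a $B$‑module by the height of the positive roots: since the unipotent radical strictly lowers the height of a weight, the spans of the low‑height $\phi_\gamma$ are $B$‑submodules with one‑dimensional sections $k_\gamma$, $\gamma\in\Phi^+$. The associated spectral sequence has $E_1$‑terms $\opH^j(B,k_{\gamma-\la})$, into which I feed the standard facts $\opH^0(B,\mu)=k$ iff $\mu=0$, and $\opH^1(B,\mu)=k$ iff $\mu=-p^i\al''$ for some $\al''\in\Pi$ and $i\ge 0$, together with the description of $\opH^2(B,\mu)$ from \cite[Thm.~5.8]{BNP2}. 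Under Assumption~\ref{A:char} the root‑sum estimates (Propositions~\ref{P:d4rootsum}, \ref{P:d2rootsum}, \ref{P:rootsum-triple}) force the untwisted index $i=0$ and bound which $\gamma$ can contribute, leaving only finitely many nonzero $E_1$‑terms: an $\opH^1$‑contribution from each $\gamma=\la-\al''\in\Phi^+$, an $\opH^0$‑contribution present exactly when $\la\in\Phi^+$, and an $\opH^2$‑contribution exactly when $\la-\gamma$ is one of the length‑two weights $-w\cdot 0$ carried by $\opH^2(\ul,k)$ (note that the ``adjacent'' sums $\al''+\al'''$ do \emph{not} contribute here, since $\phi_{\al''}\wedge\phi_{\al'''}=d_1(\phi_{\al''+\al'''})$ is a coboundary by Proposition~\ref{P:rank3}).

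The crux, and the step I expect to be hardest, is reading off $E_\infty$, i.e.\ controlling the connecting differentials. The four special weights come out cleanly: for $\al,\be\in\Pi$ non‑adjacent there is no $\opH^0$‑ and no $\opH^2$‑term, so both $\opH^1$‑contributions survive and the dimension is $2$; for $\la=2\al$ and for $\la=-s_\al s_\be\cdot 0$ there is a single $\opH^1$‑contribution with no interfering term, giving $1$; and for $\al,\be\in\Pi$ adjacent the forced vanishing $\opH^0(B,\ul^*\otimes-\la)=0$ (Proposition~\ref{P:Bcoho-ucoho}(a), as $\la\notin\Pi$) automatically cancels one of the two $\opH^1$‑contributions, again leaving $1$. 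The genuine obstacle is the remaining weights $\la=\al+\be$, where one must show the surviving $\opH^1$‑contributions are in fact coboundaries, that is, that the spectral‑sequence differential into the $\opH^2$‑term attached to the length‑two weight $\la-\gamma$ is nonzero. I would settle this by identifying that differential with the Chevalley–Eilenberg map $d_1$ and evaluating it on explicit cocycle representatives via Proposition~\ref{P:rank3} and its rank‑two analogues, exactly in the manner of Propositions~\ref{P:dotthree} and~\ref{P:dottwo}; equivalently, one can verify the same cancellations by the direct $\operatorname{Der}(\ul,\ul^*)/\operatorname{Inn}(\ul,\ul^*)$ computation, in which a weight‑$\la$ derivation is a tuple of scalars $D(x_{-\al})=c_\al\phi_{\la-\al}$ (nonzero only when $\la-\al\in\Phi^+$), the Serre relations impose the linear constraints, and the inner derivations occupy a one‑dimensional space precisely when $\la\in\Phi^+$. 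The main labor is thus the finite but delicate bookkeeping ensuring every weight $\la=\al+\be$ is sorted into the correct case and every borderline differential is shown to have the right rank.
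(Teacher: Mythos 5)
Your opening reduction---every nonzero weight is $\la=\al+\be$ with $\al\in\Pi$, $\be\in\Phi^+$ by Proposition~\ref{P:Bcoho-ucoho}(b), and then $\opH^1(\ul,\ul^*)_\la\cong\opH^1(B,\ul^*\otimes-\la)$ by Proposition~\ref{P:Bcoho-ucoho}(e)---matches the paper's starting point, but from there the routes diverge in a way that matters. The paper never computes $\opH^1(B,\ul^*\otimes-\la)$ for all such $\la$. It first runs an induction on the rank of $\Phi$, using the decomposition $\ul\cong{\mathfrak a}_J\ltimes\ul_J$ and the five-term exact sequence to show $\opH^1(\ul,\ul^*)_\la\cong\opH^1({\mathfrak a}_J,{\mathfrak a}_J^*)_\la$ whenever $\la\in{\mathbb Z}\Phi_J$ for a proper subset $J\subset\Pi$. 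Since every weight on the list is supported on at most two simple roots, induction (with base case rank $\le 2$ done by hand) settles all of them, and the only remaining task is vanishing when $\la$ involves \emph{every} simple root and $|\Pi|\ge 3$. For that one case the paper invokes only the determination half of \cite[Prop.~4.3]{AR} (which needs just $p\ge5$) to force $\la=a\be_1+b\be_2$, a contradiction. You instead propose to evaluate $\opH^1(B,\ul^*\otimes-\la)$ weight by weight for all $\la=\al+\be$, which amounts to re-deriving \cite[Prop.~4.3]{AR} in full under Assumption~\ref{A:char} rather than $p>h$.

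That is where the gap is. For the weights that must give zero---$\la=\al+\be$ with $\be$ non-simple and $\la\neq-s_{\al'}s_{\be'}\cdot 0$---your plan hinges on showing that each surviving term $\opH^1(B,k_{\ga-\la})$ is killed by a nonzero differential into some $\opH^2(B,k_{\ga'-\la})$. You assert this differential ``is'' the Chevalley--Eilenberg $d_1$; it is not: it is a connecting map for rational $B$-cohomology arising from the filtration of $\ul^*$, and relating it to $d_1$ on $\Lambda^\bullet(\ul^*)$ would itself require an argument through $B_1$- and $U_1$-cohomology where the Frobenius untwisting is delicate. The fallback derivation computation is sound in principle, but verifying the Serre-relation constraints for every $\la=\al+\be$ in every type is exactly the content that \cite{AR} needed $p>h$ to control; labelling it ``bookkeeping'' leaves the theorem unproved precisely for the weights not on the list. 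Two further inaccuracies in the setup: the nonvanishing of the $E_1$-terms $\opH^2(B,k_\mu)$ is governed by \cite[Thm.~5.8]{BNP2} (which allows weights such as $-\be_2-p^l\be_3$), not by which classes of $\opH^2(\ul,k)$ are coboundaries, so your inventory of degree-two terms is incomplete; and the two-root identities you need to exclude, $\al+\be=\ga+p^i\al''$ with $i\ge1$, are not covered by Propositions~\ref{P:d4rootsum}, \ref{P:d2rootsum}, \ref{P:rootsum-triple}, which all concern sums of three roots---the relevant tool is \cite[Prop.~3.1]{BNP2}, as used in the proof of Proposition~\ref{P:Bcoho-ucoho}.
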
 

\begin{proof} We use induction on the rank of the root system. First one can verify the statement directly for root systems of rank less than or equal to two 
(i.e., $\Phi=A_{1}, A_{1}\times A_{1}, A_{2}, B_{2}, G_{2}$) by using the restriction on the allowable weights of $\opH^{1}({\mathfrak u},{\mathfrak u}^{*})$ given in part (b) of Proposition~\ref{P:Bcoho-ucoho}. Alternatively, we can also appeal to \cite[Prop. 4.3]{AR} since our assumptions on $p$ imply $p>h$ for these small rank cases.

The next step is to show that one can reduce from $\ul$ to a subalgebra corresponding to a subset of simple roots $J$ that is properly contained in $\Pi$. Let $J\subseteq \Pi$. Consider the parabolic subalgebra ${\mathfrak p}_J = {\mathfrak l}_J \ltimes {\mathfrak u}_J$
of $\Lie(G)$ where ${\mathfrak l}_J$ is the Levi subalgebra associated to $J$.  
Then one can express ${\mathfrak u}\cong {\mathfrak a}_{J}\ltimes {\mathfrak u}_{J}$. We have the LHS spectral sequence: 
$$E_{2}^{i,j}=\opH^{i}({\mathfrak a}_{J},\opH^{j}({\mathfrak u}_{J},{\mathfrak u}^{*}))\Rightarrow \opH^{i+j}({\mathfrak u},{\mathfrak u}^{*}).$$ 
The five term exact sequence yields 
\begin{equation}
0\rightarrow \opH^{1}({\mathfrak a}_{J},\opH^{0}({\mathfrak u}_{J},{\mathfrak u}^{*}))\hookrightarrow 
\opH^{1}({\mathfrak u},{\mathfrak u}^{*})\rightarrow [\opH^{1}({\mathfrak u}_{J},{\mathfrak u}^{*})]^{{\mathfrak a}_{J}}\rightarrow 
\end{equation} 
Note this is an exact sequence of $T$-modules. If $\lambda\in {\mathbb Z}\Phi_{J}$ then 
\begin{equation}\label{E:prereduce}
\opH^{1}({\mathfrak a}_{J},\opH^{0}({\mathfrak u}_{J},{\mathfrak u}^{*}))_{\lambda} \cong 
\opH^{1}({\mathfrak u},{\mathfrak u}^{*})_{\lambda}
\end{equation} 
because all the weights of $\opH^{1}({\mathfrak u}_{J},{\mathfrak u}^{*})$ are positive linear combinations of simple roots with 
at least one simple root in $\Pi-J$. Now the short exact sequence 
$$0\rightarrow {\mathfrak a}_{J}^{*} \rightarrow {\mathfrak u}^{*} \rightarrow {\mathfrak u}_{J}^{*} \rightarrow 0$$ 
yields a short exact sequence 
$$0\rightarrow \opH^{0}({\mathfrak u}_{J},{\mathfrak a}_{J}^{*})\rightarrow \opH^{0}({\mathfrak u}_{J},{\mathfrak u}^{*}) 
\rightarrow Z \rightarrow 0$$ 
or equivalently, 
$$0\rightarrow {\mathfrak a}_{J}^{*}\rightarrow \opH^{0}({\mathfrak u}_{J},{\mathfrak u}^{*}) 
\rightarrow Z \rightarrow 0,$$ 
where $Z$ is a submodule of $\opH^{0}({\mathfrak u}_{J},{\mathfrak u}_J^{*})$. Observe that any weight in $Z$ is 
not in ${\mathbb Z}\Phi_{J}$. This means that any weight of $\opH^{j}({\mathfrak a}_{J},Z)$ is not in ${\mathbb Z}\Phi_{J}$ 
for $j\geq 0$. Consequently, for $\lambda\in {\mathbb Z}\Phi_{J}$, 
\begin{equation} 
\opH^{1}({\mathfrak a}_{J},{\mathfrak a}_{J}^{*})_{\lambda}\cong 
\opH^{1}({\mathfrak a}_{J},\opH^{0}({\mathfrak u}_{J},{\mathfrak u}^{*}))_{\lambda}.  
\end{equation} 
Combining this with (\ref{E:prereduce}) yields 
\begin{equation} \label{E:reduce}
\opH^{1}({\mathfrak a}_{J},{\mathfrak a}_{J}^{*})_{\lambda}\cong \opH^{1}({\mathfrak u},{\mathfrak u}^{*})_{\lambda}
\end{equation} 
for $\lambda\in {\mathbb Z}\Phi_{J}$. 

As seen in  Proposition \ref{P:Bcoho-ucoho}, every weight of $\opH^{1}({\mathfrak u},{\mathfrak u}^{*})$ is of the form $\alpha+\beta$ where 
$\alpha\in \Pi$ and $\beta\in \Phi^{+}$. Set $\lambda=\alpha+\beta$. If $\lambda\in {\mathbb Z}\Phi_{J}$ where $J$ is properly 
contained in $\Pi$ then one can use the induction hypothesis and (\ref{E:reduce}) to get the claim. 

It remains to show that $\opH^{1}({\mathfrak u},{\mathfrak u}^{*})_{\lambda}=0$ when $\lambda=\alpha+\beta$ where 
$\lambda$ is a positive linear combination with every simple root occurring and  $|\Pi|\geq 3$. If this occurs, by 
Proposition~\ref{P:Bcoho-ucoho}, $\opH^1(B,\ul^*\otimes -\lambda) \neq 0$. 
In \cite[Prop. 4.3]{AR}, a complete computation of $\opH^1(B,\ul^*\otimes -\la)$ is given under the condition 
$p > h$.  That proof consists of two parts: a determination of those $\la$ for which the cohomology is non-zero and then a computation of those cohomology groups. For our purposes, we only need the determination portion of that proof. 
In that portion, it is shown that if $\opH^1(B,\ul^*\otimes -\lambda) \neq 0$, then $\lambda = a\be_1 +b\be_2$ for simple roots $\be_1, \be_2$ and non-negative integers $a, b$, which contradicts our assumption that $\la$ involves at least three simple roots, and hence completes the  proof.   The necessary portion of the argument in \cite{AR} only requires $p \geq 5$ (in order to guarantee that $\langle \eta + \delta,\ga^{\vee}\rangle < p$ for simple roots $\eta, \ga$ and a positive root $\de$). In particular, it holds under our Assumption \ref{A:char}.   
\end{proof} 

\begin{rem} An alternative,non-inductive, proof of the aforementioned result can also be obtained by arguing along the lines used in \cite{BNP2} to compute $\opH^2(\ul,k)$, and again using \cite[Prop. 4.3]{AR}.
\end{rem}


\section{$B_r$-cohomology}


\subsection{} In this section, we compute $\opH^3(B_r,\la)$ for 
all $\la \in X(T)$.  We recall that the first cohomology groups 
$\opH^1(B_1,\la)$ were computed for all primes and all weights $\la \in X(T)$ by Jantzen \cite{Jan2}.
For higher $r$, $\opH^1(B_r,\la)$ is computed by the authors in \cite{BNP1},
and $\opH^2(B_r,\la)$ is computed by the authors \cite{BNP2} and Wright \cite{W}.

We investigate the third cohomology $\opH^3(B_r,\la)$ by starting out with $B_1$.
Note that for $\la \in X(T)$, we may write $\la = \la_0 + p\la_1$
for unique weights $\la_0, \la_1$ with $\la_0 \in X_1(T)$.  Then 
\begin{equation}
\opH^3(B_1,\la) = \opH^3(B_1,\la_0 + p\la_1) \cong 
        \opH^3(B_1,\la_0)\otimes p\la_1.
\end{equation} 
Hence, it suffices to compute $\opH^3(B_1,\la)$ for $\la \in X_1(T)$. To this end, we will be interested
in $\la$ of the form $\lambda=w\cdot 0 + p\nu \in X_1(T)$ with $w\in W$ and $l(w)=3$.
Given such a $w$, there exists a unique weight $\nu \in X(T)$ such that $\la = w\cdot 0 + p\nu \in X_1(T)$.  
Such weights $\nu$ are summarized in Lemma \ref{L:gammaw} (in the Appendix).



\subsection{} We can now present the computation of $\opH^{3}(B_{1},\lambda)$ for $\lambda\in X_{1}(T)$. 
The weight $\ga_w$ given in the statement of the theorem is identified in Lemma \ref{L:gammaw}.  The precise
identification of $\ga_w$ is not necessary for the proof.

\begin{thm}\label{T:B1coho} Let $p$ satisfy Assumption~\ref{A:char} and $\la \in X_1(T)$. Then as $B/B_{1}$-modules
$$
\opH^{3}(B_{1},\lambda)\cong 
\begin{cases} 
\gamma_{w}^{(1)} &\text{if } \lambda=w\cdot 0+p\ga_{w} \text{ with } \ell(w)=3, \\
({\mathfrak u}^{*})^{(1)}\otimes \omega_{\alpha}^{(1)}      & \text{if } \lambda=s_{\al}\cdot 0+p\omega_{\al} \text{ where }
\al\in \Pi, \\
0    & \text{otherwise}.
\end{cases} 
$$
\end{thm}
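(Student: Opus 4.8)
The plan is to start from the $T$-module description furnished by Proposition~\ref{P:B1-cohostructure}(a), which writes $\opH^3(B_1,\la)$ as a direct sum of one-dimensional weight spaces $p\nu$ whose multiplicities are governed by the weights of the two $\ul$-cohomology modules $\opH^3(\ul,k)$ and $(\ul^*)^{(1)}\otimes\opH^1(\ul,k)$. By Theorem~\ref{T:u-coho} the weights of the first are exactly $\{-w\cdot 0 : \ell(w)=3\}$, each of multiplicity one, and by Proposition~\ref{P:h1} the weights of $\opH^1(\ul,k)$ are the simple roots, so the second module has weights $p\si+\al$ with $\si\in\Phi^+$ and $\al\in\Pi$. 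First I would record these two weight lists and reduce, via the twist isomorphism $\opH^3(B_1,\mu+p\eta)\cong\opH^3(B_1,\mu)\otimes p\eta$, to identifying which $\nu$ contribute for a fixed $\la\in X_1(T)$.

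Next I would carry out the case analysis producing the three cases. A contribution from $\opH^3(\ul,k)$ forces $\la=w\cdot 0+p\nu$; since $\la\in X_1(T)$ the weight $\nu$ is then the unique $\ga_w$ of Lemma~\ref{L:gammaw}, and the multiplicity is $\dim\opH^3(\ul,k)_{-w\cdot 0}=1$, giving the $T$-line $p\ga_w$. A contribution from $(\ul^*)^{(1)}\otimes\opH^1(\ul,k)$ forces $\la=s_\al\cdot 0+p\mu$ (using $s_\al\cdot 0=-\al$); pairing $\la$ with the simple coroots and imposing $\la\in X_1(T)$ pins down $\mu=\om_\al$, and the surviving weights are $p(\si+\om_\al)$ for $\si\in\Phi^+$, each of multiplicity one. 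The crucial point is that these two situations are mutually exclusive and exhaustive: since every weight $-w\cdot 0$ with $\ell(w)=3$ is a sum of three distinct positive roots containing a simple summand, Corollary~\ref{C:rootsum-simple} (together with Proposition~\ref{P:rootsum-triple} to separate distinct $w$) shows such a weight can never be congruent mod $pX(T)$ to a single simple root, so no $\la$ lies in both families, and any $\la$ outside both families yields $\opH^3(B_1,\la)=0$. This establishes the stated $T$-module structure in all three cases.

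Finally I would upgrade the $T$-module isomorphism to a $B/B_1$-module isomorphism, which I expect to be the main obstacle. In the first case the answer is one-dimensional, hence determined by its weight, giving $p\ga_w\cong\ga_w^{(1)}$ as $B/B_1$-modules. In the second case one must identify the full module structure. I would first observe that $\opH^1(\ul,k)=(\ul/[\ul,\ul])^*$ carries the trivial $U$-action, so Proposition~\ref{P:h1} holds as a statement of $B$-modules and $(\ul^*)^{(1)}\otimes\opH^1(\ul,k)\cong\bigoplus_{\be\in\Pi}(\ul^*)^{(1)}\otimes k_\be$ as a decomposition into $B$-submodules of the $B$-module $\opH^3(U_1,k)$. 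Using the twist reduction to $\opH^3(B_1,-\al)$ together with the $B$-equivariance of the Friedlander--Parshall spectral sequence and of the passage from $U_1$- to $B_1$-cohomology, the only summand whose weights become $p$-divisible after tensoring with $-\al$ is the $\be=\al$ summand $(\ul^*)^{(1)}\otimes k_\al$; every weight of that summand then lies in $pX(T)$, so the entire $B$-submodule $(\ul^*)^{(1)}$ survives, yielding $\opH^3(B_1,-\al)\cong(\ul^*)^{(1)}$ and hence $\opH^3(B_1,\la)\cong(\ul^*)^{(1)}\otimes\om_\al^{(1)}$. The delicate part is precisely this last equivariance: one must confirm that extracting the $p$-divisible weight spaces returns the $B$-submodule $(\ul^*)^{(1)}$ intact as a $B/B_1$-module, i.e.\ that the residual $U/U_1$-action is the adjoint action on $(\ul^*)^{(1)}$ rather than a twisted or non-split variant. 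This is the step where the $B$-module refinement of the earlier weight computations, rather than their purely $T$-module versions, is essential.
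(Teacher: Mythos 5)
Your proposal is correct and follows essentially the same route as the paper: reduce to the $T$-module weight count coming from Theorem~\ref{T:utoU1} (equivalently Proposition~\ref{P:B1-cohostructure}(a)), use Corollary~\ref{C:rootsum-simple} and Proposition~\ref{P:rootsum-triple} to show the two families of weights are mutually exclusive and occur without multiplicity, and then upgrade to $B/B_1$-modules via the $B$-equivariance of the spectral sequence (your extra care with the $B$-module structure of $(\ul^*)^{(1)}\otimes\opH^1(\ul,k)$ in the case $\la=s_\al\cdot 0+p\om_\al$ is a sound way to make precise what the paper only sketches). The one loose end is that Proposition~\ref{P:rootsum-triple} assumes $p\geq 7$ in type $C_n$ and $p\geq 11$ in types $F_4$ and $G_2$, which exceeds Assumption~\ref{A:char}, so your appeal to it for separating distinct $w$ of length three does not literally apply there; the paper closes this by invoking Remark~\ref{R:rootsum-triple}, which lists the exceptional solutions and observes that none of them involves a weight of the form $-w\cdot 0$ with $\ell(w)=3$.
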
 

\begin{proof} From Proposition~\ref{T:utoU1}, we have the following isomorphisms as $T/T_{1}$-modules: 
\begin{eqnarray*} 
\opH^{3}(B_{1},\lambda) &\cong& [\opH^{3}(U_{1},k)\otimes \lambda]^{T_{1}}\\
&\cong & [ \opH^{3}({\mathfrak u},k)\otimes \lambda]^{T_{1}}\oplus \left(({\mathfrak u}^{*})^{(1)}\otimes 
[\opH^{1}({\mathfrak u},k)\otimes \lambda]^{T_{1}}\right).
\end{eqnarray*}
Note that the first isomorphism holds as $B/B_1$-modules and the second isomorphism is obtained from a spectral 
sequence on which $B/B_1$ acts, preserving the differentials.   We will argue below that these two summands 
cannot occur simultaneously, and hence the identification of $\opH^3(B_1,\la)$ holds as a $B/B_1$-module.

Therefore, if $p\nu$ is a weight of $\opH^{3}(B_{1},\lambda)$, then,
by Theorem~\ref{T:u-coho} and Proposition~\ref{P:h1},
either (i) $p\nu=-w\cdot 0 + \lambda$ for some $w\in W$ with $l(w)=3$ or 
(ii) $p\nu= p\si + \be + \la$ with $\si \in \Phi^+$, $\be \in \Pi$. 
In the first case, since $\lambda \in X_{1}(T)$, $\nu = \ga_{w}$.   
In the second case, $\lambda=-\be+p(\nu - \si) = s_{\be}\cdot 0 + p(\nu - \si)$ and we must have
$\nu - \si = \omega_{\be}$. 

Given $\la \in X_1(T)$, we consider whether it can simultaneously take form (i) and (ii). This would imply that
$w\cdot 0 + p\ga_{w} = -\be + p\omega_{\be}$ where $\ell(w) = 3$ and $\be \in \Pi$.  Equivalently, we would have
$-w\cdot 0 = \be + p(\ga_{w} - \omega_{\be})$.   By Corollary~\ref{C:rootsum-simple} (with $-w\cdot 0$
playing the role of $\al + \si_1 + \si_2$), this is impossible.  

Lastly, it remains to check whether the weights of type (i) or (ii) can occur in multiple ways, something that would 
lead to doubling of the cohomological dimension.
For $p \geq 5$, it is straightforward to check (or see \cite{BNP1}) that if $-\be_1 + p\omega_{\be_1} = -\be_2 + p\omega_{\be_2}$,
then $\be_1 = \be_2$.   Similarly, suppose that $w_{1}\cdot 0+p\nu_{1}=w_{2}\cdot 0+p\nu_{2}\in X_{1}(T)$ for some $w_1$, $w_2$ 
with $l(w_{1})=l(w_{2})=3$.  Such an equation can be rewritten as $-w_1\cdot 0 = -w_2\cdot 0 + p(\nu_1 - \nu_2)$. 
We may apply Proposition~\ref{P:rootsum-triple} 
(with $-w_1\cdot 0$ playing the role of $\al + \si_1 + \si_2$ and $-w_2 \cdot 0$ playing the role of $i_1\be_1 + i_2\be_2 + \be_3$)
to conclude that $\nu_1 - \nu_2 = 0$ or $\nu_1 = \nu_2$, from which it follows that $w_1 = w_2$.

This application of Proposition~\ref{P:rootsum-triple} potentially fails in types $C_n$ for $p = 5$, $F_4$ for $p = 7$,
and $G_2$ for $p = 7$.  However, from Remark~\ref{R:rootsum-triple}, none of the potential ``bad'' weights have the 
form $-w\cdot 0$ for $\ell(w) = 3$, and so the claim follows.
\end{proof}

\begin{rem}  
For type $A_4$, when $p = 5$, and type $A_6$, when $p=7$, the structure of $\opH^3(B_1,\la)$ may be more complex. In these cases there exist pairs $w_1, w_2 \in W,$ both of length at most $3$, with $w_1\cdot 0 = w_2 \cdot 0 + p\nu$ for some $\nu \in X(T).$ Following the 
discussion in \cite[6.1 - 6.3]{AJ} one observes that this happens if and only if the pair $w_1, w_2$ appears in the same left coset of the cyclic subgroup of $W$ generated by $s_1s_2 ... s_n.$ 
For example, in type $A_4$, let $w_1= s_4$ and 
$w_2 = s_3s_2s_1$ then $\la = s_4\cdot 0 + 5\omega_4= s_3s_2s_1\cdot 0 + 5\omega_3$. The weight $\la$ takes both 
forms as given in the theorem and the two cases combine. 
For type $A_6$ there is just the pair, $w_1=s_3s_2s_1, w_2 = s_4s_5s_6.$  Note that these  are the ``bad" cases referred to in Proposition~\ref{P:rootsum-triple}, see also Remark~\ref{R:rootsum-simple}. 
From length considerations one can see that no such pairs occur in ranks greater than $6$ and cohomological degree 3.
\end{rem}


\subsection{} The preceding calculations can be used to compute
$\opH^3(B_r,\la)$ for any $r$ and $\lambda\in X(T)$.  We first need to review the known computations of 
$\opH^{j}(B_{1},\lambda_{0})$ for $j=0,1,2$ as a $B/B_{1}$-module where $\lambda_{0}\in X_{1}(T)$ 
when $p$ satisfies Assumption~\ref{A:char}:
\begin{equation} \label{E:H0}
\opH^{0}(B_{1},\lambda_{0})\cong \begin{cases} k & \text{if } \lambda_{0}=0, \\
0 & \text{else},
\end{cases}
\end{equation}
\begin{equation} \label{E:H1}
\opH^{1}(B_{1},\lambda_{0})\cong \begin{cases} \omega_{\alpha}^{(1)} & \text{if } \lambda_{0}=-\alpha+p\omega_{\alpha}, \alpha\in \Pi, \\
0 & \text{else},
\end{cases}
\end{equation}
\begin{equation} \label{E:H2}
\opH^{2}(B_{1},\lambda_{0})\cong \begin{cases} ({\mathfrak u}^{*})^{(1)} & \text{if } \lambda_{0}=0, \\
\gamma_{w}^{(1)}  & \text{if } \lambda_{0}=w\cdot 0+p\gamma_{w}, l(w)=2, \\

0 & \text{else}.
\end{cases}
\end{equation}
We also note that more generally, for $\la \in X(T)$,
\begin{equation}\label{E:H0forr}
\opH^0(B_r,\la) \cong
\begin{cases} \nu^{(r)} & \text{if } \la = p^r\nu,\\
0 & \text{else}.
\end{cases}
\end{equation}
Lastly, we need the following observation.

\begin{lem}\label{L:4,0}
Let $p$ satisfy Assumption~\ref{A:char} and  $\al, \be \in \Pi$.
\begin{itemize}
\item[(a)]
If $\la = -\al,$ then the zero weight space of $\opH^3(B_r, \la)$ is zero. 
\item[(b)]
 If $\la \in \{ -\al-p^i\be \; | \; 0\leq i \leq r-1\},$ then the zero weight space of $\opH^4(B_r, \la)$ is zero. 
 \item[(c)]
 If $\la = s_{\al}s_{\be} \cdot 0$ and $\al+ \be \in \Phi^+,$ then the zero weight space of $\opH^4(B_r, \la)$ is zero. 
 \end{itemize}
\end{lem}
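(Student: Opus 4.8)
The plan is to transfer each statement from $B_r$ to $U_r$ and reduce it to the non‑appearance of a single weight in a $U_r$‑cohomology group. Writing $B_r = U_r\rtimes T_r$ with $T_r$ linearly reductive, and noting that $U_r$ acts trivially on the one‑dimensional module $\la$, the Lyndon–Hochschild–Serre spectral sequence for $U_r\unlhd B_r$ collapses to $\opH^n(B_r,\la)\cong[\opH^n(U_r,k)\otimes\la]^{T_r}$. Reading off the residual $T$‑action through $T/T_r$, the $T$‑weight $0$ part is exactly the $(-\la)$‑weight space:
$$
[\opH^n(B_r,\la)]_0\cong\opH^n(U_r,k)_{-\la}.
$$
Thus (a), (b), (c) become, respectively, the assertions that $\al$ is not a weight of $\opH^3(U_r,k)$, that $\al+p^i\be$ is not a weight of $\opH^4(U_r,k)$, and that $-s_\al s_\be\cdot 0$ is not a weight of $\opH^4(U_r,k)$.

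To control these weights I would iterate the Friedlander–Parshall spectral sequence $E_2^{2i,j}=S^i(\ul^*)^{(1)}\otimes\opH^j(\ul,k)\Rightarrow\opH^{2i+j}(U_1,k)$ (\cite{FP1}, as used for Theorem~\ref{T:utoU1}) across the Frobenius layers via $U_1\unlhd U_r$, $U_r/U_1\cong U_{r-1}^{(1)}$. Since every weight of the abutment already occurs on the $E_2$‑page, each weight $\mu$ of $\opH^n(U_r,k)$ has the form
$$
\mu=\sum_{s=0}^{r-1}\bigl(p^{s+1}\si_s+p^s\tau_s\bigr),
$$
where $\si_s$ is a weight of $S^{i_s}(\ul^*)$ (a sum of $i_s$ positive roots), $\tau_s$ is a weight of $\opH^{j_s}(\ul,k)\subseteq\Lambda^{j_s}(\ul^*)$ (so $\operatorname{ht}(\tau_s)\ge j_s$), and $\sum_s(2i_s+j_s)=n$. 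Only this containment of weight sets is needed, so no degeneration claim is required.

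The proofs are then a $p$‑adic digit analysis. Because all $\si_s,\tau_s\in{\mathbb N}\Phi^{+}$, the degree‑zero digit of $\mu$ is $\tau_0$; comparing coefficients shows $0\le\tau_0\le\mu$ coordinatewise with $\tau_0\equiv\mu\pmod p$, and under Assumption~\ref{A:char} the relevant coefficients are $<p$, so no carrying occurs and $\tau_0$ is pinned down. For (a) this forces $\tau_0=\al$, hence $j_0=1$ and all other terms vanish, giving $n=1\ne 3$. For (b) it forces $\tau_0=\al$ ($j_0=1$), after which $\be$ must be produced at the $p^i$‑level by a single $\si_{i-1}$ or $\tau_i$; as $\be$ is simple this costs degree at most $2$, so $n\le 3\ne 4$ (the case $i=0$ is smaller still). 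Both cases use only the height bound $\operatorname{ht}(\tau_s)\ge j_s$.

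For (c) the same analysis forces $\mu=\tau_0$ with $j_0=4$, so $-s_\al s_\be\cdot 0$ would be a weight of $\opH^4(\ul,k)$, hence of $\Lambda^4(\ul^*)$. But $-s_\al s_\be\cdot 0=-w\cdot 0$ with $\ell(w)=2$, and \cite[Prop.~2.3]{BNP2} gives $\dim_k\Lambda^4(\ul^*)_{-w\cdot 0}=0$ whenever $\ell(w)\ne 4$; this contradiction rules the weight out. The main obstacle I anticipate is the bookkeeping behind the displayed digit form: one must check that the weights of $\opH^n(U_r,k)$ genuinely inherit this shape from the iterated spectral sequence and that the coefficients of $\al+p^i\be$ and of $-s_\al s_\be\cdot 0$ stay below $p$ so that the digit comparison borrows nothing. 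The rank‑two shape of $-s_\al s_\be\cdot 0$ (whose simple‑root coefficients are at most $4$) together with the bounds in Assumption~\ref{A:char} are exactly what make the no‑carrying step valid.
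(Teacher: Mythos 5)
Your argument is correct and is essentially the paper's: both reduce to $[\opH^n(B_r,\la)]_0 \cong \opH^n(U_r,k)_{-\la}$ and then observe that every weight on the first page of a spectral sequence computing $\opH^{\bullet}(U_r,k)$ (the paper invokes \cite[I.9.14]{Jan1} for $U_r$ directly rather than iterating the $U_1\unlhd U_r$ LHS sequence with Friedlander--Parshall, but the resulting list of contributing terms and weight constraints is the same) is a sum $\sum_s p^{s+1}\si_s+\sum_s p^s\tau_s$ of twisted positive-root sums whose degrees add to $n$, which is incompatible with the given $\la$. The one step to tighten is the no-carrying claim in (b) when $i\geq 1$, since the $\be$-coefficient of $\mu=\al+p^i\be$ is $p^i\geq p$; there you should bound the digits themselves rather than $\mu$, noting that $\tau_0$ is supported on $\{\al,\be\}$ and is a sum of at most four distinct positive roots of that rank-$\leq 2$ subsystem, hence has all coefficients at most $4<p$, which together with $\tau_0\equiv\al \pmod{p}$ forces $\tau_0=\al$ as you assert.
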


\begin{proof} Recall that $\opH^i(B_r,\la) \cong \opH^i(U_r,\la)^{T_r}$ and consider the spectral sequence of \cite[I.9.14]{Jan1} abutting to $\opH^{\bullet}(U_r,\la)$ (whose differentials preserve the action of $T$).  The only terms that can contribute to $\opH^3(U_r,\la)$ have the form $\la\otimes(\ul^*)^{(i)}\otimes(\ul^*)^{(j)}$ or $\la\otimes\Lambda^3(\ul^*)^{(j)}$ for $1 \leq i \leq r$ and $0 \leq j \leq r-1$.  For $\la = -\al$, neither term contains the zero weight.  Similarly the only terms that contribute to $\opH^4(U_r,\la)$ are $\la\otimes S^2(\ul^*)^{(i)}$, 
$\la\otimes(\ul^*)^{(i_1)}\otimes(\ul^*)^{(i_2)}$, $\la\otimes(\ul^*)^{(i)}\otimes\Lambda^2(\ul^*)^{(j)}$, $\la\otimes(\ul^*)^{(i)}\otimes(\ul^*)^{(j_1)}\otimes(\ul^*)^{(j_2)}$, or $\la\otimes\Lambda^4(\ul^*)^{(j)}$ for $1 \leq i, i_1, i_2 \leq r$, $0 \leq j \leq r - 1$, and $0 \leq j_1 < j_2 \leq r-1$.
\end{proof}

The following proposition provides a recursive algorithm to compute $\opH^3(B_r,\la)$. 

\begin{prop}\label{Br:prop} Let $p$ satisfy Assumption~\ref{A:char}, $r \geq 2$, and $\lambda \in X(T)$. 
\begin{itemize} 
\item[(a)] If $\lambda_{0}\neq 0$, then 
\begin{equation*}
\opH^{3}(B_{r},\lambda)\cong \begin{cases} 
\nu^{(r)} &\text{if } \lambda=p^{r}\nu-p^{i}\alpha+ w\cdot 0, \;l(w)=2,\\ 
                                    &\quad 1\leq i\leq r-1, \al \in \Pi, \nu \in X(T),\\
    \nu^{(r)} &\text{if } \lambda=p^{r}\nu+p^{i}w\cdot 0- \alpha , \;l(w)=2,\\ 
                                    &\quad 1\leq i\leq r-1, \al \in \Pi, \nu \in X(T),\\                                
\nu^{(r)} & \text{if } \lambda=p^{r}\nu+w\cdot 0, \; l(w)=3, \nu \in X(T),\\
                 
\nu^{(r)} & \text{if } \lambda=p^{r}\nu-p^i\beta-\alpha,\\
&\quad  1\leq i \leq r-1, \alpha,\beta\in \Pi, \nu \in X(T),\\
 \nu^{(r)} & \text{if } \lambda = p^r\nu -p^l \gamma - p^i \beta - \alpha,\\
&\quad 1 \leq i < l \leq r-1, \alpha, \beta, \gamma \in \Pi, \nu \in X(T),
 \\

(\mathfrak{u}^{*} \otimes \nu)^{(r)}& \text{if } \la = p^r\nu - \alpha, \alpha \in \Pi, \nu \in X(T),\\

0 &\text{otherwise}.
\end{cases} 
\end{equation*} 

\item[(b)] If $\lambda_{0}=0$, then 
\begin{equation*}
\opH^{3}(B_{r},\lambda)\cong 
\begin{cases}
 
{\nu}^{(r)}\oplus {\nu}^{(r)} & \text{if } \lambda =p^{r}\nu-p^i\beta-p\alpha,  \\
     &\quad 2 \leq i \leq r-1, \alpha,\beta\in \Pi, \nu \in X(T),\\
   
   {\nu}^{(r)}\oplus {\nu}^{(r)} & \text{if } \lambda =p^{r}\nu-p\beta-p\alpha,  \\
     &\quad  \alpha,\beta\in \Pi, \al \neq \be, \al + \be \notin \Phi^+, \nu \in X(T),\\
     
       {\nu}^{(r)}& \text{if } \lambda =p^{r}\nu-p\beta-p\alpha,  \\
     &\quad  \alpha,\beta\in \Pi, \al + \be \in \Phi^+, \nu \in X(T),\\
     
     {\nu}^{(r)}& \text{if } \lambda =p^{r}\nu-2p\alpha,\; \alpha \in \Pi, \nu \in X(T),\\
     
        {\nu}^{(r)}& \text{if } \lambda =p^{r}\nu+ps_{\al}s_{\be} \cdot 0,  \\
     &\quad  \alpha,\beta\in \Pi, \al + \be \in \Phi^+, \nu \in X(T),\\

\opH^{3}(B_{r-1},\lambda_{1})^{(1)} & \text{otherwise}. 
\end{cases}
\end{equation*} 
\end{itemize}
\end{prop}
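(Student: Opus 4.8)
The plan is to run the Lyndon--Hochschild--Serre spectral sequence for the normal subgroup $B_1 \unlhd B_r$. Writing $\la = \la_0 + p\la_1$ with $\la_0 \in X_1(T)$, each $\opH^j(B_1,\la) \cong \opH^j(B_1,\la_0)\otimes p\la_1$ is a $B/B_1$-module of the form $(M_j\otimes\la_1)^{(1)}$, so the identification $B_r/B_1 \cong B_{r-1}^{(1)}$ together with $\opH^i(B_r/B_1, N^{(1)}) \cong \opH^i(B_{r-1},N)^{(1)}$ turns the spectral sequence into
$$
E_2^{i,j} = \opH^i(B_{r-1},\, M_j\otimes\la_1)^{(1)} \Rightarrow \opH^{i+j}(B_r,\la).
$$
Here $M_j$ is the untwisted version of $\opH^j(B_1,\la_0)$, read off from Theorem~\ref{T:B1coho} and the displays (\ref{E:H0})--(\ref{E:H2}): $M_0 = k$ (present only when $\la_0 = 0$), $M_1 = \omega_{\al}$ when $\la_0 = -\al + p\omega_{\al}$, $M_2 \in \{\ul^*,\ga_w\}$, and $M_3 \in \{\ga_w,\ \ul^*\otimes\omega_{\al}\}$. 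First I would record the four entries contributing to total degree $3$, namely $E_\infty^{3,0},E_\infty^{2,1},E_\infty^{1,2},E_\infty^{0,3}$, and note that the recursive ``otherwise'' term $\opH^3(B_{r-1},\la_1)^{(1)}$ in part (b) is exactly $E_\infty^{3,0}$.

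The structural input is the near-disjointness of the weight forms making the various $\opH^j(B_1,\la_0)$ nonzero. Arguing as in the proof of Theorem~\ref{T:B1coho} via Corollary~\ref{C:rootsum-simple} and Proposition~\ref{P:rootsum-triple}, I would show that for $\la_0 \neq 0$ at most one of the rows $j=1,2$ survives, the only coincidence being $\la_0 = -\al + p\omega_{\al} = s_{\al}\cdot 0 + p\omega_{\al}$, where rows $j=1$ and $j=3$ both appear. Away from that coincidence and from $\la_0 = 0$, a single row is nonzero, there are no differentials, and $\opH^3(B_r,\la)\cong E_2^{3-j,\,j}$. Substituting the known recursive formulas for $\opH^2(B_{r-1},\omega_{\al}\otimes\la_1)$ (row $j=1$), $\opH^1(B_{r-1},\ga_w\otimes\la_1)$ (row $j=2$), and $\opH^0(B_{r-1},\ga_w\otimes\la_1)$ (row $j=3$, via (\ref{E:H0forr})) then produces cases (a)(1)--(a)(5), with all rows vanishing in the complementary subcase giving the final line of (a).

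The genuinely delicate situations are the coincident case $\la_0 = -\al + p\omega_{\al}$, yielding (a)(6), and the case $\la_0 = 0$, yielding all of part (b); in each, two rows contribute and one must show that the connecting $d_3$ differentials vanish so that no cancellation occurs. For $\la_0 = 0$ the relevant maps are $d_3\colon E_3^{0,2}\to E_3^{3,0}$ and $d_3\colon E_3^{1,2}\to E_3^{4,0}$, whose source/target involve $\opH^3(B_{r-1},\la_1)$ and $\opH^4(B_{r-1},\la_1)$; tracking $T$-weights shows each lands in a zero weight space of an $\opH^4(B_{r-1},\mu)$ of precisely the type killed by Lemma~\ref{L:4,0}, so it vanishes. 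This leaves $E_\infty^{3,0}\cong \opH^3(B_{r-1},\la_1)^{(1)}$ (the recursion) and $E_\infty^{1,2}\cong \opH^1(B_{r-1},\ul^*\otimes\la_1)^{(1)}$. Unwinding the latter through Proposition~\ref{P:Bcoho-ucoho} and Theorem~\ref{T:otherucoho}---whose weight list ($\al+\be$ with $\al+\be\notin\Phi^+$ of dimension $2$, and $\al+\be\in\Phi^+$, $2\al$, $-s_{\al}s_{\be}\cdot 0$ of dimension $1$) matches term-by-term the special weights of part (b)---yields the $\nu^{(r)}\oplus\nu^{(r)}$ and $\nu^{(r)}$ contributions. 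The analogous differential and weight analysis in the coincident case lets $E_\infty^{2,1}$ and $E_\infty^{0,3}$ assemble into $(\ul^*\otimes\nu)^{(r)}$.

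The step I expect to be the main obstacle is the vanishing of the $d_3$ differentials in the two-row cases: one must argue purely from the $T$-weight combinatorics (using Lemma~\ref{L:4,0} and the root-sum Propositions~\ref{P:d2rootsum} and \ref{P:rootsum-triple}) that the source and target weight spaces are forced into the configurations controlled by those results, so that the surviving entries are full (co)kernels rather than proper subquotients. The remaining work is bookkeeping: verifying that the recursive substitutions into $\opH^{3-j}(B_{r-1},-)$ reproduce each listed weight form exactly once and that the disjointness statements hold uniformly in $r$.
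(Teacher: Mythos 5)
Your proposal follows essentially the same route as the paper: the LHS spectral sequence for $B_1 \unlhd B_r$, mutual exclusivity of the possible forms of $\la_0$ via the root-sum results (Proposition~\ref{P:d4rootsum}, Proposition~\ref{P:d2rootsum}, Corollary~\ref{C:rootsum-simple}), vanishing of the $d_3$ differentials via Lemma~\ref{L:4,0}, and recursive identification of the surviving $E_2$-terms through Theorem~\ref{T:otherucoho} and the known lower-degree $B_{r-1}$-cohomology. One small correction to your bookkeeping: in the coincident case $\la_0 = -\al + p\omega_{\al}$ the paper shows that $E_2^{2,1}$ and $E_2^{0,3}$ can never be simultaneously nonzero (using \cite[Lemma 5.6]{BNP2}), so the $(\ul^*\otimes\nu)^{(r)}$ entry arises from $E_2^{2,1} \cong \opH^2(B_{r-1},\omega_{\al}+\la_1)^{(1)}$ alone rather than from the two terms ``assembling.''
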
 

\begin{proof} Consider the LHS spectral sequence 
$$E_{2}^{i,j}=\opH^{i}(B_{r}/B_{1},\opH^{j}(B_{1},\lambda_{0})\otimes p\lambda_{1})\Rightarrow \opH^{i+j}(B_{r},\lambda)$$ 
where $\lambda=\lambda_{0}+p\lambda_{1}$, $\lambda_{0}\in X_{1}(T)$ and $\lambda_{1}\in X(T)$. 
The possible terms that can contribute to $\opH^{3}(B_{r},\lambda)$ are $E_{2}^{i,j}$ where $i+j=3$,
and this can be non-zero only if $\opH^j(B_{1},\la_0) \neq 0$ for some $j = 0, 1, 2, 3$.  These cases 
will be analyzed using Theorem~\ref{T:B1coho} and Equations 
(\ref{E:H0})-(\ref{E:H2}).  Note that, by Proposition~\ref{P:d4rootsum}, Proposition~\ref{P:d2rootsum}, Corollary~\ref{C:rootsum-simple}, and \cite[Prop. 3.1(A)(B)]{BNP2}, the four possibilities given below for $\la_0$ cannot happen simultaneously.

\vskip .25cm 
\noindent
{\it Case 1}: $\lambda_{0}=w\cdot 0+p\ga_{w}$, $l(w)=2$ 
\vskip .25cm 

In this case, we have $\opH^{j}(B_{1},\lambda_{0})=0$ for $j=0,1,3$ and $\opH^2(B_1,\la_0) \cong \ga_w^{(1)}$. Therefore, 
$$\opH^{3}(B_{r},\lambda)\cong E_{2}^{1,2}\cong 
\opH^{1}(B_{r}/B_{1},p(\ga_{w}+\lambda_{1}))\cong  \opH^{1}(B_{r-1},\ga_{w}+\lambda_{1})^{(1)}.$$ 
According to \cite[Thm. 2.8(A)]{BNP1},  $\opH^{1}(B_{r-1},\ga_{w}+\lambda_{1})^{(1)}\cong \nu^{(r)}$ if 
$\ga_{w}+\lambda_{1}=p^{r-1}\nu-p^{i}\alpha$ for some $\alpha\in \Pi$, $0\leq i \leq r-2$, otherwise it is zero. 
This translates to  
$\lambda=w\cdot 0+p^{r}\nu-p^{i}\alpha$ where $\alpha\in \Pi$ and $1\leq i\leq r-1$. 

\vskip .25cm 
\noindent 
{\it Case 2}: $\lambda_{0}=w\cdot 0+p\gamma_{w}$, $l(w)=3$
\vskip .25cm 

This condition implies that $\opH^{j}(B_{1},\lambda_{0})=0$ for $j=0,1,2$. Thus, 
$$\opH^{3}(B_{r},\lambda)\cong E_{2}^{0,3}= \opH^0(B_r/B_1,\opH^3(B_1,w\cdot 0 + p\ga_w)\otimes p\la_1) \cong \opH^{0}(B_{r-1},\gamma_{w}+\lambda_{1})^{(1)}$$ 
where the last isomorphism follows from Theorem~\ref{T:B1coho}.
Moreover, 
$$\opH^{0}(B_{r-1},\gamma_{w}+\lambda_{1})\cong \begin{cases}
\nu^{(r-1)} & \text{if } \gamma_{w}+\lambda_{1}=p^{r-1}\nu, \nu \in X(T), \\
0  & \text{if } \gamma_{w}+\lambda_{1}\notin p^{r-1}X(T). 
\end{cases} 
$$ 
The non-vanishing case translates to $\lambda = w\cdot 0 + p^r\nu$. 

\vskip .25cm 
\noindent 
{\it Case 3}: $\lambda_{0}=-\alpha+p\omega_{\alpha}$ 
\vskip .25cm 
We have $\opH^{j}(B_{1},\lambda_{0})=0$ for $j=0,2$ and $E_{2}^{2,1}\hookrightarrow \opH^{3}(B_{r},\lambda)$. 
In this case we have a possible non-zero differential at the $E_{3}$-level: $d_{3}:E_{3}^{0,3}\rightarrow E_{3}^{3,1}$. 
Note that $E_{2}^{0,3}=E_{3}^{0,3}$ and $E_{2}^{3,1}=E_{3}^{3,1}$. One can immediately conclude that (as $T/T_r$-modules)
$\opH^{3}(B_{r},\lambda)\cong E_{2}^{2,1}\oplus \text{ker }d_{3}$. One of the goals will be to show that $d_{3}=0$ so that 
\begin{equation}\label{E:case3}
\opH^{3}(B_{r},\lambda)\cong E_{2}^{2,1}\oplus E_{2}^{0,3}.
\end{equation}
In the following, we apply Theorem~\ref{T:B1coho} and Proposition~\ref{P:Bcoho-ucoho}(a) to obtain
\begin{eqnarray*} 
E_{2}^{0,3}&\cong & \opH^{0}(B_{r}/B_{1},\opH^{3}(B_{1},\lambda_{0})\otimes p\lambda_{1}) \\
&\cong & \opH^{0}(B_{r}/B_{1}, ({\mathfrak u}^{*})^{(1)}\otimes p(\omega_{\alpha}+\lambda_{1}))\\
&\cong & \opH^{0}(B_{r-1}, {\mathfrak u}^{*} \otimes(\omega_{\alpha}+\lambda_{1}))^{(1)} \\
&\cong & [(\opH^0(U_{r-1},\ul^*)\otimes(\omega_{\al} + \la_1))^{T_{r-1}}]^{(1)}\\
&\cong & [(\opH^0(\ul,\ul^*)\otimes(\omega_{\al} + \la_1))^{T_{r-1}}]^{(1)}\\
&\cong & \begin {cases} \nu^{(r)} & \text{if } \lambda_{1}= p^{r-1}\nu- \beta-\omega_{\alpha}, \text{ for } \beta\in \Pi, \nu \in X(T), \\
0  & \text{else } 
\end{cases}\\
&\cong & \begin {cases} \nu^{(r)} & \text{if } \lambda= p^{r}\nu- p\beta-\alpha, \text{ for } \alpha, \beta\in \Pi, \nu \in X(T), \\
0  & \text{else}. 
\end{cases} 
 \end{eqnarray*} 
On the other hand, 
\begin{equation*} 
E_{2}^{3,1}\cong \opH^{3}(B_{r}/B_{1},\opH^{1}(B_{1},\lambda_{0})\otimes p\lambda_{1}) 
\cong \opH^{3}(B_{r-1},\omega_{\alpha}+\lambda_{1})^{(1)}.
\end{equation*} 
Now suppose that $E_{2}^{0,3}\neq 0$. Then $\omega_{\alpha}+\lambda_{1}= p^{r-1}\nu-\beta$ for some 
$\beta\in \Pi, \nu \in X(T)$. It follows that 
$$E_{2}^{3,1}\cong \opH^3(B_{r-1},\omega_{\al} + \la_1)^{(1)} \cong \opH^{3}(B_{r-1},-\beta)^{(1)}\otimes \nu^{(r)}.$$ 
By Lemma~\ref{L:4,0}, $\opH^{3}(B_{r-1},-\beta)^{(1)}$ does not have a zero weight space. Since the differential $d_{3}$ must preserve 
$T/T_{r}$ weight spaces, it follows that $d_{3}=0$. Consequently, (\ref{E:case3}) holds. 

If $E_{2}^{0,3}\neq 0$, then $\beta+\omega_{\alpha}+\lambda_{1}= p^{r-1}\nu$ for some $\beta\in \Pi, \nu \in X(T)$. 
When this occurs 
$$E_{2}^{2,1}= \opH^2(B_r/B_1,p(\omega_{\al} + \la_1)) \cong \opH^{2}(B_{r-1},-\beta)^{(1)}\otimes \nu^{(r)}=0$$ 
by \cite[Lemma 5.6]{BNP2}. Hence, under the given assumption on $\la_0$, if $\opH^3(B_r,\la) \neq 0$, then 
$\opH^3(B_r,\la) \cong E_2^{0,3}$ or $\opH^3(B_r,\la) \cong E_2^{2,1} \cong \opH^2(B_{r-1},\omega_{\al} + \la_1)^{(1)}.$ 

In the latter case one obtains from \cite[Thm. 5.7]{BNP2} that 
\begin{equation*}
\opH^{3}(B_{r},\lambda)\cong \begin{cases} 
(\mathfrak{u}^{*} \otimes \nu)^{(r)}& \text{if } \la = p^r\nu - \alpha, \alpha \in \Pi, \nu \in X(T),\\

\nu^{(r)} & \text{if } \lambda = p^r\nu + p^i w \cdot 0- \alpha, \ell(w) =2,\\
&\quad 1 \leq i \leq r-1, \alpha \in \Pi, \nu \in X(T),
 \\

\nu^{(r)} & \text{if } \lambda = p^r\nu - p^i \beta - \alpha,\\
&\quad 2 \leq i \leq r-1, \alpha, \beta \in \Pi, \nu \in X(T),
 \\
 
 \nu^{(r)} & \text{if } \lambda = p^r\nu -p^l \gamma - p^i \beta - \alpha,\\
&\quad 1 \leq i < l \leq r-1, \alpha, \beta, \gamma \in \Pi, \nu \in X(T),
 \\
0 & \text{otherwise}. 
\end{cases}
\end{equation*} 
Combining these gives the remaining conditions on $\la$ for non-vanishing in part (a) of the theorem and completes the proof of the theorem for the case $\la_0 \neq 0$.

\vskip .25cm 
\noindent 
{\it Case 4}: $\lambda_{0}=0$ 
\vskip .25cm 
In this case, we have $\opH^{j}(B_{1},\lambda_{0})=0$ for $j=1,3$. We have 
\begin{equation*} 
E_{3}^{1,2}=E_{2}^{1,2} \cong \opH^{1}(B_{r-1},{\mathfrak u}^{*}\otimes \lambda_{1})^{(1)}
\end{equation*} 
and 
\begin{equation*} 
E_{3}^{3,0}=E_{2}^{3,0} \cong \opH^{3}(B_{r-1}, \lambda_{1})^{(1)} .
\end{equation*} 
We will show that (as $T/T_r$-modules)
$$\opH^{3}(B_{r},\lambda)\cong E_{3}^{3,0} \oplus E_{3}^{1,2}.$$ 
To do so, we will show that the differentials $d_3 : E_3^{0,2} \to E_3^{3,0}$ 
and $d_{3}:E_{3}^{1,2}\rightarrow E_{3}^{4,0}$ are both zero. 

In the first case, we have
$$
E_3^{0,2} = \opH^0(B_r/B_1,\opH^2(B_1,k)\otimes p\la_1) \cong \opH^0(B_{r-1},\ul^*\otimes \la_1)^{(1)}.
$$
As seen in Case 3, this is non-zero only if $\la_1 + \be = p^{r-1}\nu$ for some $\be \in \Pi$ and $\nu \in X(T)$.
In that case, $E_3^{0,2} \cong \nu^{(r)}$ and we have
$$
E_3^{3,0} \cong \opH^3(B_{r-1},-\be + p^{r-1}\nu)^{(1)} \cong \opH^3(B_{r-1},-\be)^{(1)}\otimes\nu^{(r)}.
$$
Lemma~\ref{L:4,0} now implies that $\opH^3(B_{r-1},-\be)$ has no zero weight space. Hence, the differential is the zero map.

To consider the second differential, we first need to compute $E_{3}^{1,2}$. 
For $r=1$, Theorem~\ref{T:otherucoho} can be used to see that 

\begin{align}\label{E:H1B1}
\opH^{1}(B_{1},{\mathfrak u}^{*}\otimes \lambda_{1})&\cong [\opH^{1}(U_{1},{\mathfrak u}^{*})\otimes \lambda_{1}]^{T_{1}}\notag\\
 &\cong [\opH^{1}({\mathfrak u},{\mathfrak u}^{*})\otimes \lambda_{1}]^{T_{1}}\notag\\
 &\cong 
	\begin{cases}
		\nu^{(1)} \oplus \nu^{(1)} & \text{if }\lambda_{1}=p\nu-\alpha -\beta, \text{ for }  \alpha, \beta \in \Pi,\\
			&\quad \al \neq \be,\; \alpha+\beta \notin \Phi^+, \; \nu \in X(T),\\	
		\nu^{(1)}  & \text{if }\lambda_{1}=p\nu-\alpha -\beta, \text{ for }  \alpha, \beta \in \Pi,\\
			&\quad \alpha+\beta \in \Phi^+, \; \nu \in X(T), \\
		\nu^{(1)} & \text{if } \lambda_1 = p\nu - 2\al, \text{ for } \al \in \Pi,\; \nu \in X(T),\\
		\nu^{(1)} & \text{if }\lambda_{1}=p\nu +s_{\alpha}s_{\beta}\cdot 0, \text{ for }  \alpha, \beta \in \Pi,\\
			&\quad \alpha+\beta \in \Phi^+, \; \nu \in X(T), \\
		0 & \text{else}.
	\end{cases}
 \end{align}
Similarly, one obtains
\begin{eqnarray}\label{E:H0B1}
\opH^{0}(B_{1},{\mathfrak u}^{*}\otimes \lambda_{1})&\cong 
& \begin{cases} 
\nu^{(1)} & \text{if }\lambda_{1}=p\nu-\alpha, \text{ for }  \alpha \in \Pi, \; \nu \in X(T), \\
0  & \text{else}. 
\end{cases}
\end{eqnarray} 
One can apply the LHS spectral sequence (for $B_1 \unlhd B_{r-1}$) abutting to $\opH^{\bullet}(B_{r-1},\ul^*\otimes \la_1)$
to obtain an exact sequence 
$$
0 \to E_1^{1,0} \to E^1 \to E_2^{0,1} \to E_2^{2,0}
$$
which is equivalent to 
\begin{eqnarray*}
0 &\rightarrow& \opH^{1}(B_{r-2},\opH^{0}(B_{1},{\mathfrak u}^{*}\otimes \lambda_{1})^{(-1)})^{(1)} 
\rightarrow \opH^{1}(B_{r-1},{\mathfrak u}^{*}\otimes \lambda_{1})\\ 
&\rightarrow & \opH^{0}(B_{r-2},\opH^{1}(B_{1},{\mathfrak u}^{*}\otimes \lambda_{1})^{(-1)})^{(1)} 
\rightarrow \opH^{2}(B_{r-2},\opH^{0}(B_{1},{\mathfrak u}^{*}\otimes \lambda_{1})^{(-1)})^{(1)}.\notag
\end{eqnarray*} 
Note that Assumption~\ref{A:char}  does not allow for both $\opH^{0}(B_{1},{\mathfrak u}^{*}\otimes \lambda_{1})$ and $\opH^{1}(B_{1},{\mathfrak u}^{*}\otimes \lambda_{1})$ to be non-zero.  

First we consider the case  
$\opH^{0}(B_{1},{\mathfrak u}^{*}\otimes \lambda_{1})\neq 0.$
By (\ref{E:H0B1}),
$\lambda_{1}= p\kappa-\alpha$ for some $\alpha \in \Pi$ and $\kappa \in X(T)$. Since  $\opH^{1}(B_{1},{\mathfrak u}^{*}\otimes \lambda_{1})= 0$ one concludes
\begin{equation*} 
\opH^{1}(B_{r-1},{\mathfrak u}^{*}\otimes \lambda_{1})\cong \opH^{1}(B_{r-2},\opH^{0}(B_{1},{\mathfrak u}^{*}\otimes \lambda_{1})^{(-1)})^{(1)} 
\cong \opH^{1}(B_{r-2},\kappa)^{(1)}.  
\end{equation*} 
Generalizing (\ref{E:H1}), from \cite[Thm. 2.8(A)]{BNP1}, we have that  $\opH^{1}(B_{r-2},\kappa)^{(1)}\neq 0$ if and only if $\kappa=p^{r-2}\nu-p^{i}\beta$ for some 
$\beta \in \Pi$ with $0\leq i \leq r-3$. This translates to $\la_1 = p^{r-1}\nu- p^{i+1}\beta -\al$, in which case we have $\la = p^{r}\nu- p^{i}\beta -p\al$ with $2\leq i \leq r-1$ and
$
E_3^{1,2} \cong  {\nu}^{(r)}.
$

Next we consider the case  
$\opH^{0}(B_{1},{\mathfrak u}^{*}\otimes \lambda_{1})= 0$ and
$\opH^1(B_1,\ul\otimes\la_1) \neq 0$.  
By (\ref{E:H1B1}),
$\lambda_{1}= p\kappa-\alpha-\beta$, $\la_1 = p\kappa - 2\al$, or $\lambda_{1}= p\kappa+s_{\alpha}s_{\beta}\cdot 0$ for some $\alpha,\beta \in \Pi$ and $\kappa \in X(T).$ Moreover, from the above exact sequence, 
$$\opH^{1}(B_{r-1},{\mathfrak u}^{*}\otimes \lambda_{1})^{(1)}\cong \opH^{0}(B_{r-2},\opH^{1}(B_{1},{\mathfrak u}^{*}\otimes \lambda_{1})^{(-1)})^{(1)}.$$ 
For the latter to be non-zero, by (\ref{E:H1B1}) and (\ref{E:H0forr}), we need $\kappa = p^{r-2}\nu$ for some $\nu \in X(T)$. Combining these with our earlier observation we conclude that
\begin{align}\label{E:1,2}
E_3^{1,2} &\cong \opH^1(B_{r-1},\ul^*\otimes\la_1)^{(1)}\notag\\
	&\cong
\begin{cases}
\nu^{(r)}   & \text{if } \lambda_{1}=p^{r-1}\nu-p^i\beta-\alpha  \text{ for } \al, \beta \in \Pi, \al + \be \in \Phi^+,\\
	&\quad  1 \leq i \leq r-2, \nu \in X(T), \\
 \nu^{(r)} \oplus  \nu^{(r)}  & \text{if } \lambda_{1}=p^{r-1}\nu-\alpha-\beta  \text{ for } \al, \beta \in \Pi, \al \neq \be,\\
	&\quad \al + \be \notin \Phi^+, \nu \in X(T), \\
\nu^{(r)}   & \text{if } \lambda_{1}=p^{r-1}\nu-\alpha-\beta  \text{ for } \al, \beta \in \Pi, \al + \be \in \Phi^+,\\
	&\quad  \nu \in X(T), \\
\nu^{(r)} & \text{if } \la_1 = p^{r-1}\nu - 2\al \text{ for } \al \in \Pi,\; \nu \in X(T),\\	
\nu^{(r)}   & \text{if } \lambda_{1}=p^{r-1}\nu+s_{\alpha}s_{\beta} \cdot 0  \text{ for } \al, \beta \in \Pi, \al + \be \in \Phi^+,\\
	&\quad  \nu \in X(T), \\
0 & \text{ otherwise}.
\end{cases}
\end{align}
Now it follows from Lemma~\ref{L:4,0} that at least one of the terms $E_3^{1,2}$ and $E_3^{4,0}$ vanishes.
Hence $d_3:E_3^{1,2} \to E_3^{4,0}$ is the zero map and $\opH^3(B_r, \la) = E_3^{1,2} \oplus E_3^{3,0}. $

Finally, we consider whether $E_3^{1,2}$ and $E_3^{3,0}$ can simultaneously be non-zero.  Suppose that $E_3^{1,2} \neq 0
$. Then $\la = p\la_1$ for one of the weights given in (\ref{E:1,2}).   Note that in all (non-zero) cases $\la_1 \notin pX(T)$.  On the other hand, for $\la = p\la_1$, $E_3^{3,0} \cong \opH^3(B_{r-1},\la_1)^{(1)}$.  Inductively,
applying the theorem to $r - 1$, $E_3^{3,0}$ can be non-zero only for those weights listed in part (a) of the theorem.   Comparing lists, we see that $E_3^{1,2}$ and $E_3^{3,0}$ are both non-zero if and only if $\la_1 = p^{r}\nu- p^{i}\beta -\al$ with $1\leq i \leq r-2$ (necessarily requiring $r \geq 3$).  In this case 
$E_3^{3,0} = E_3^{1,2}= {\nu}^{(r)}.$
The statement of the theorem in the $\la_0 = 0$ case follows from our above analysis. 
\end{proof} 


We now state the results of Proposition~\ref{Br:prop} in closed form.

\begin{thm}\label{T:Brfinal} Let $p$ satisfy Assumption~\ref{A:char}, $r \geq 2$, and  $\lambda \in X(T)$. 
\begin{equation*}
\opH^{3}(B_{r},\lambda)\cong \begin{cases} 

(\mathfrak{u}^{*} \otimes \nu)^{(r)}& \text{if } \la = p^r\nu - p^l\alpha, \; \\
&\quad 0\leq l \leq r-1,\; \alpha \in \Pi, \; \nu \in X(T),\\

\nu^{(r)} & \text{if } \lambda=p^{r}\nu+p^l w\cdot 0, \; l(w)=3, \\
&\quad 0\leq l \leq r-1,\; \nu \in X(T),\\

\nu^{(r)} &\text{if } \lambda=p^{r}\nu-p^{m}\alpha+ p^l w\cdot 0, \;l(w)=2,\\ 
                                    &\quad 0\leq l <  m\leq r-1, \;\al \in \Pi,\; \nu \in X(T),\\
                                    
   \nu^{(r)} &\text{if } \lambda=p^{r}\nu+p^{m}w\cdot 0- p^l\alpha , \;l(w)=2,\\ 
                                    &\quad 0\leq l <  m\leq r-1, \;\al \in \Pi,\; \nu \in X(T),\\

\nu^{(r)} & \text{if } \lambda=p^{r}\nu-p^l\beta- \alpha,\\
&\quad  1\leq l \leq r-1,\; \alpha,\beta\in \Pi, \;\nu \in X(T),\\

{\nu}^{(r)}\oplus {\nu}^{(r)} & \text{if } \lambda =p^{r}\nu-p^m\beta-p^l\alpha,  \\
     &\quad 1 \leq l <m \leq r-1,\; \alpha,\beta\in \Pi,\; \nu \in X(T),\\
 
 \nu^{(r)} & \text{if } \lambda = p^r\nu -p^n \gamma - p^m \beta - p^l \alpha,\\
&\quad 0 \leq l< m < n \leq r-1,\; \alpha, \beta, \gamma \in \Pi, \;\nu \in X(T),
 \\

 {\nu}^{(r)}\oplus {\nu}^{(r)} & \text{if } \lambda =p^{r}\nu-p^l(\alpha+ \be),  \\
     &\quad  1\leq l \leq r-1, \; \alpha,\beta\in \Pi, \; \al \neq \be,\; \al + \be \notin \Phi^+,\; \nu \in X(T),\\
     
 {\nu}^{(r)} & \text{if } \la = p^r\nu - 2p^l\al,\\
	&\quad 1\leq l \leq r-1, \; \al \in \Pi, \nu \in X(T),\\

 {\nu}^{(r)} & \text{if } \lambda =p^{r}\nu-p^l(\alpha+\be),  \\
     &\quad  1\leq l \leq r-1, \; \alpha,\beta\in \Pi, \; \al + \be \in \Phi^+,\; \nu \in X(T),\\

    {\nu}^{(r)}& \text{if } \lambda =p^{r}\nu+p^ls_{\al}s_{\be} \cdot 0,  \\
     &\quad  1\leq l \leq r-1, \; \alpha,\beta\in \Pi, \; \al + \be \in \Phi^+,\; \nu \in X(T),\\

0 &\text{otherwise}.
\end{cases} 
\end{equation*} 

\end{thm}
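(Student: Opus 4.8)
The plan is to prove the theorem by induction on $r$, regarding Proposition~\ref{Br:prop} as the one-step recursion whose closed solution is the assertion. For the base case $r = 1$ I would check that Theorem~\ref{T:B1coho} already gives the statement: its two nonzero outcomes, namely $\ga_w^{(1)}$ when $\la = w\cdot 0 + p\ga_w$ with $\ell(w) = 3$, and $(\ul^*)^{(1)}\otimes\omega_\al^{(1)} \cong (\ul^*\otimes\omega_\al)^{(1)}$ when $\la = s_\al\cdot 0 + p\omega_\al$ (equivalently $\la = p\omega_\al - \al$), are exactly the $l = 0$ instances of cases (2) and (1) of the present theorem (taking $\nu = \ga_w$ and $\nu = \omega_\al$), while every remaining case requires an index in the range $1 \le l \le r - 1$ and so is vacuous for $r = 1$.

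For the inductive step I would fix $\la = \la_0 + p\la_1$ with $\la_0 \in X_1(T)$ and apply Proposition~\ref{Br:prop}. When $\la_0 \neq 0$, part~(a) computes $\opH^3(B_r,\la)$ outright, and I would match each of its nonvanishing shapes against the theorem's cases whose lowest occurring power of $p$ is $p^0$: the shape $p^r\nu - \al$ is case (1) at $l = 0$; the shape $p^r\nu + w\cdot 0$ with $\ell(w) = 3$ is case (2) at $l = 0$; the two shapes involving $w\cdot 0$ with $\ell(w) = 2$ are cases (3) and (4) at $l = 0$; the shape $p^r\nu - p^i\be - \al$ is case (5); and the three-root shape is case (7) at $l = 0$. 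When $\la_0 = 0$, part~(b) applies: its explicit lines furnish the minimal ($l = 1$) instances of the ``doubling'' and special cases (6),(8),(9),(10),(11), while the final line returns $\opH^3(B_{r-1},\la_1)^{(1)}$. Here I would invoke the inductive hypothesis for $r - 1$; passing from $\la$ to $\la_1 = \la/p$ lowers every exponent of $p$ by one, so each case valid for $B_{r-1}$ reappears for $B_r$ with all of its indices increased by one and with the Frobenius twist promoted from $(r-1)$ to $(r)$. Iterated, this fills out the higher-index instances of the part~(a) and part~(b) shapes and produces precisely the stated ranges $0 \le l \le r-1$ and $1 \le l \le r-1$.

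The step I expect to be the main obstacle is the bookkeeping that makes this promotion unambiguous. Two things must be verified: that every $\la$ lands in at most one theorem case, so the explicit part~(b) lines never collide with a recursively promoted case; and that a weight carrying one of the distinguished shapes is never silently swept into the final ``otherwise'' clause before reaching the level at which its lowest power of $p$ is pinned. Both reduce to the non-collision results already in hand, Propositions~\ref{P:d4rootsum},~\ref{P:d2rootsum},~\ref{P:rootsum-triple} and Corollary~\ref{C:rootsum-simple} (whose relevance is exploited in the proofs of Theorem~\ref{T:B1coho} and Proposition~\ref{Br:prop}): under Assumption~\ref{A:char} these forbid rewriting the shapes $-w\cdot 0$, $\al + p^i\be$, $2\al$, and $s_\al s_\be\cdot 0$ into one another modulo $pX(T)$. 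I would therefore argue case by case that the value of $\la_0$ forces $\la$ into precisely the expected branch of Proposition~\ref{Br:prop}, and that dividing by $p$ preserves the shape until the lowest exponent attains its minimum. A secondary point to record is why cases (5) and (6) carry different multiplicities: case (5) keeps one simple root at $p^0$, so $\la_0 \neq 0$ and the class arises from the multiplicity-one $\opH^0(\ul,\ul^*)$ contribution in Proposition~\ref{Br:prop}(a), whereas case (6) places both simple roots at positive powers of $p$, so $\la_0 = 0$ and the class arises from the $\opH^1(\ul,\ul^*)$ term, which Theorem~\ref{T:otherucoho} shows is two-dimensional for that weight. The direct sums in cases (6) and (8) then survive the recursion because the Frobenius twist is exact and both summands share a single weight, so no extension problem intervenes.
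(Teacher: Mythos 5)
Your proposal follows the paper's own route: the paper presents Theorem~\ref{T:Brfinal} simply as the closed form of the recursion in Proposition~\ref{Br:prop}, with the base of the recursion supplied by Theorem~\ref{T:B1coho} and the non-collision of the various weight shapes guaranteed by exactly the root-sum results you cite, so your induction on $r$ is precisely this unrolling made explicit, and your case-matching between the proposition and the theorem is accurate. One correction to your closing aside: the multiplicity two in case (6) (distinct powers $p^l\al$, $p^m\be$ with $l<m$) does \emph{not} come from the two-dimensionality of $\opH^1(\ul,\ul^*)_{\al+\be}$ --- that weight only governs case (8), where both simple roots sit at the same power $p^l$. In case (6) the two copies arise from two different terms of the spectral sequence in the proof of Proposition~\ref{Br:prop}(b), namely $E_3^{1,2}$ (fed through the one-dimensional $\opH^0(\ul,\ul^*)$ contribution) and $E_3^{3,0}$ (the recursively promoted case (5)); since you use Proposition~\ref{Br:prop} as a black box, this misattribution does not affect the validity of your argument.
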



\subsection{\bf $B$-cohomology:} From \cite[Cor. 7.2]{CPS}, we have
$\opH^3(B,\la) \cong \varprojlim \opH^3(B_r,\la)$.  If there is $\la \in X(T)$ with 
$\opH^3(B,\la) \neq 0$, then there exists $s > 0$ such that the restriction map $\opH^3(B,\la) \to \opH^3(B_r,\la)$
is non-zero for all $r \geq s$.   In particular, we must have $\opH^3(B_r,\la) \neq 0$ for all $r \geq s$.  
From Theorem~\ref{T:Brfinal}, one can readily determine those $\la$ for which $\opH^3(B,\la) \neq 0$ along with the dimensions of these groups.   These are given in Theorem~\ref{T:Bfinal}.   Since $B$ acts trivially on $\opH^{\bullet}(B,\la)$, this can then be used to compute $\opH^3(B,\la)$ for all $\la \in X(T)$
satisfying Assumption~\ref{A:char}.  This recovers and extends the work of Andersen and Rian \cite[Thm. 5.2]{AR}
who computed these groups for $p > h$.

\begin{thm}\label{T:Bfinal} Let $p$ satisfy Assumption~\ref{A:char} and $\lambda \in X(T)$. Then
\begin{equation*}
\dim \opH^{3}(B,\lambda)\cong \begin{cases}

1& \text{if } \lambda=p^l w\cdot 0, \; l(w)=3, \; l \geq 0,\\

1 &\text{if } \lambda=-p^{m}\alpha+ p^l w\cdot 0, \;l(w)=2, \; m > l \geq 0, \;\al \in \Pi,\\
                                    
   1 &\text{if } \lambda=p^{m}w\cdot 0- p^l\alpha , \;l(w)=2,\; m > l \geq 0, \;\al \in \Pi,\\ 
                                         
1 & \text{if } \lambda=-p^l\beta- \alpha,\; l \geq 1,\; \alpha,\beta\in \Pi, \\

2 & \text{if } \lambda =-p^m\beta-p^l\alpha,  \; m > l \geq 1,\; \alpha,\beta\in \Pi,\\
 
1 & \text{if } \lambda =  -p^n \gamma - p^m \beta - p^l \alpha, \; n > m > l \geq 0,
\; \alpha, \beta, \gamma \in \Pi,
 \\

 2 & \text{if } \lambda =-p^l(\alpha+ \be),  
     \;  l \geq 1, \; \alpha,\beta\in \Pi,\; \al \neq \be, \; \al + \be \notin \Phi^+,\\

 1 & \text{if } \lambda =-p^l(\alpha+\be),  
     \;  l \geq 1, \; \alpha,\beta\in \Pi, \; \al + \be \in \Phi^+,\\
     
 1 & \text{if } \la = -2p^l\al,\; l \geq 1,\; \al \in \Pi,\\

    1& \text{if } \lambda =p^ls_{\al}s_{\be} \cdot 0,  
     \;  l \geq 1, \; \alpha,\beta\in \Pi, \; \al + \be \in \Phi^+,\\

0 &\text{otherwise}.
\end{cases} 
\end{equation*} 

\end{thm}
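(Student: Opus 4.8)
The plan is to combine the inverse-limit description $\opH^3(B,\la)\cong\varprojlim_r\opH^3(B_r,\la)$ from \cite[Cor. 7.2]{CPS} with the explicit computation of $\opH^3(B_r,\la)$ in Theorem~\ref{T:Brfinal}. The crucial extra input is that $B$ acts trivially on $\opH^{\bullet}(B,\la)$: since each restriction map $\opH^3(B,\la)\to\opH^3(B_r,\la)$ is $B$-equivariant and its source is a trivial $B$-module, the image lands in the largest trivial $B$-submodule $\opH^3(B_r,\la)^B$. The transition maps of the inverse system (induced by $B_r\hookrightarrow B_{r+1}$) are likewise $B$-equivariant, so they restrict to the fixed-point submodules, and hence $\opH^3(B,\la)\cong\varprojlim_r\opH^3(B_r,\la)^B$. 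Thus the entire computation reduces to identifying the trivial $B$-submodules of the modules listed in Theorem~\ref{T:Brfinal} and passing to the limit. Here I record the two structural facts I need: a one-dimensional summand $\nu^{(r)}$ is a trivial $B$-module precisely when $\nu=0$, in which case it equals $k$; and since the $B$-socle of $\ul^*$ is $\bigoplus_{\al\in\Pi}k\phi_\al$ (as in Proposition~\ref{P:Bcoho-ucoho}(a)), the $B$-fixed vectors of $(\ul^*\otimes\nu)^{(r)}$ have weights $p^r(\al+\nu)$ with $\al\in\Pi$, so its trivial submodule is one-dimensional exactly when $\nu=-\al$ for some $\al\in\Pi$ and is zero otherwise. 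In particular $(\ul^*)^{(r)}$ (the case $\nu=0$) has no trivial submodule, because every weight of $\ul^*$ is a positive root.

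Next I fix $\la$ and determine which summands survive for large $r$. Each non-vanishing case of Theorem~\ref{T:Brfinal} presents $\la$ as $p^r\nu+\mu$, where $\mu$ is a combination of simple roots with $p$-power coefficients at most $p^{r-1}$. For a \emph{fixed} weight $\la$, any such expression in which a nonzero contribution of order $p^r$ (or higher) actually occurs forces $\la$ to be divisible by a large power of $p$, which happens for only finitely many $r$; consequently, for all sufficiently large $r$ the matching requires $\nu=0$ together with a fixed root combination $\mu=\la$ whose coefficients are bounded independently of $r$. In this stable range every contributing summand is therefore $\nu^{(r)}=k$ (cases two through eleven of Theorem~\ref{T:Brfinal}) or $(\ul^*)^{(r)}$ (case one). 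By the previous paragraph the latter contributes nothing to $\opH^3(B_r,\la)^B$, and this is precisely why the weights $\la=-p^l\al$ arising from case one of Theorem~\ref{T:Brfinal} do \emph{not} appear in the final list: although $\opH^3(B_r,-p^l\al)\cong(\ul^*)^{(r)}\neq 0$ for all large $r$, its trivial $B$-submodule vanishes.

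It then remains to read off, for each shape of $\la$, the stable value of $\dim\opH^3(B_r,\la)^B$. Specialising cases two through eleven of Theorem~\ref{T:Brfinal} to $\nu=0$ and taking $r$ large reproduces term by term the forms and dimensions asserted in the theorem, with the two-dimensional entries coming exactly from the summands $\nu^{(r)}\oplus\nu^{(r)}$. That two distinct cases cannot contribute to the same $\la$ — which would spuriously double a dimension — follows from the mutual exclusivity built into Theorem~\ref{T:Brfinal}, itself a consequence of the root-sum results Proposition~\ref{P:d2rootsum}, Proposition~\ref{P:rootsum-triple}, and Corollary~\ref{C:rootsum-simple}, together with Assumption~\ref{A:char} (which in particular excludes the coincidences $w_1\cdot 0\equiv w_2\cdot 0$ modulo $p$ occurring in types $A_4$ with $p=5$ and $A_6$ with $p=7$). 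Conversely, for any $\la$ not of one of the listed forms one checks that every summand in the stable range forces $\nu\neq 0$ or is of type $(\ul^*)^{(r)}$, whence $\opH^3(B_r,\la)^B=0$ for large $r$ and the limit vanishes.

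The main obstacle is the passage to the limit itself: I must verify that the transition maps $\opH^3(B_{r+1},\la)^B\to\opH^3(B_r,\la)^B$ are isomorphisms for all large $r$, so that $\dim\opH^3(B,\la)$ equals the stable dimension rather than collapsing to something smaller. For this I rely on the stabilisation inherent in \cite[Cor. 7.2]{CPS}: if $\opH^3(B,\la)\neq 0$ then the restriction maps $\opH^3(B,\la)\to\opH^3(B_r,\la)$ are nonzero for all large $r$, and this, combined with the constancy of $\dim\opH^3(B_r,\la)^B$ in the stable range established above, forces those maps to be isomorphisms onto $\opH^3(B_r,\la)^B$. I also note that sporadic boundary levels — such as $r$ equal to the top $p$-power occurring in $\la$, where case one of Theorem~\ref{T:Brfinal} intrudes with a genuinely nontrivial $\nu=-\al$ — have no effect on the inverse limit, since the limit depends only on the tail of the system.
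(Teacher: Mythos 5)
Your overall route is exactly the paper's: identify $\opH^3(B,\la)$ with $\varprojlim_r\opH^3(B_r,\la)$ via \cite[Cor.~7.2]{CPS}, use the triviality of the $B$-action on $\opH^{\bullet}(B,\la)$ to cut the restriction maps down to the $B$-fixed points of $\opH^3(B_r,\la)$, and read the stable answer off Theorem~\ref{T:Brfinal}. Your analysis of which summands survive for $r$ large --- in particular that the $(\ul^*\otimes\nu)^{(r)}$ case contributes no $B$-fixed vectors when $\nu=0$, so the weights $\la=-p^l\al$ drop out --- is precisely the bookkeeping the paper leaves implicit, and it is correct.

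The one step that does not hold as written is your resolution of what you yourself call the main obstacle. Knowing that the restriction maps $\opH^3(B,\la)\to\opH^3(B_r,\la)$ are eventually nonzero, together with the constancy of $\dim\opH^3(B_r,\la)^B$ in the stable range, does \emph{not} force these maps to be isomorphisms onto $\opH^3(B_r,\la)^B$: a nonzero map into a two-dimensional space can have one-dimensional image, so in the cases where Theorem~\ref{T:Brfinal} gives $\nu^{(r)}\oplus\nu^{(r)}$ your argument only yields $1\le\dim\opH^3(B,\la)\le 2$. (The upper bound is fine: the kernels of the restriction maps form a decreasing chain with trivial intersection in a finite-dimensional space, so restriction is eventually injective and lands in the fixed points.) What is actually needed for the lower bound is that the stable images of the transition maps $\opH^3(B_s,\la)\to\opH^3(B_r,\la)$ fill out the fixed-point space, which requires a separate argument --- for instance the Lyndon--Hochschild--Serre spectral sequence for $B_r\unlhd B$, showing that $\Hom_{B/B_r}(k,\opH^3(B_r,\la))=E_2^{0,3}$ consists of permanent cycles and that the terms $E_2^{i,j}$ with $i+j=3$, $i>0$ do not contribute. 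To be fair, the paper's own two-line proof does not spell this out either; but since you explicitly isolate this point, the justification you offer for it is the one place the proposal is not sound.
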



\section{$G_r$-cohomology}


\subsection{} The computation of $B_r$-cohomology can now be used
to determine the $G_r$-cohomology of induced modules $H^0(\la)$ for some
$\la \in X(T)_+$. For $i = 1, 2$, one has
the isomorphism \cite[II.12.2]{Jan1}, \cite[Thm. 6.1]{BNP2}
$$
\opH^i(G_r,H^{0}(\la))^{(-r)} \simeq
\ind_{B}^{G}(\opH^i(B_r,\la)^{(-r)})
$$
for any $\la \in X(T)_+$. This isomorphism holds independently of
the prime and was used in \cite{BNP1,BNP2} to give explicit
descriptions of $\opH^i(G_r,H^{0}(\la))$ for all primes.
The following theorem uses the calculations done by the authors in
\cite{BNP2} and Wright \cite{W} to show that this isomorphism can be extended further
to degree three for good primes.  Recall that a prime $p$ is good if $p$ does not divide any coefficient of
a root when expressed as a sum of simple roots.

\begin{thm} Let $\la \in X(T)_+$.
\begin{itemize}
\item[(a)] Let $p$ be a good prime. Then
$$
\opH^3(G_r,H^0(\la))^{(-r)} \simeq \ind_{B}^{G}(\opH^3(B_r,\la)^{(-r)}).
$$
\item[(b)] Let $p$ satisfy Assumption~\ref{A:char}. Then  $\opH^3(G_r,H^0(\la))^{(-r)}$ has a good filtration.
\end{itemize}
\end{thm}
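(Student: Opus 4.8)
The plan is to compare the $G_r$-cohomology with the $B_r$-cohomology through the Andersen–Jantzen–type spectral sequence (the $G_r$-analogue of \cite[II.12.2]{Jan1}, the same tool that produces the degree $1,2$ isomorphisms)
$$
E_2^{i,j} = R^i\ind_B^G\left(\opH^j(B_r,\la)^{(-r)}\right) \Rightarrow \opH^{i+j}(G_r,H^0(\la))^{(-r)},
$$
where $\opH^j(B_r,\la)$ is regarded as a $B/B_r$-module and untwisted by $(-r)$ to a genuine $B$-module before inducing. The edge term $E_2^{0,3} = \ind_B^G(\opH^3(B_r,\la)^{(-r)})$ is exactly the right-hand side of (a), so proving (a) reduces to showing that every other term on the anti-diagonal $i+j=3$, together with the targets $E_\bullet^{2,2}$, $E_\bullet^{3,1}$, $E_\bullet^{4,0}$ of the differentials leaving $E^{0,3}$, vanishes.

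For part (a) I would first record that for good primes the modules $\opH^j(B_r,\la)^{(-r)}$, $j=0,1,2$, are direct sums of two kinds of $B$-modules: one-dimensional weight modules $\nu$ and modules of the form $\ul^*\otimes\nu$ (this is the content of \cite{BNP1,BNP2,W}; cf.~(\ref{E:H0})–(\ref{E:H2})). The first key step is combinatorial: using that $\la\in X(T)_+$ is dominant and that the ``correction'' weights appearing in those formulas pair with each simple coroot via a bounded Cartan integer, one checks that the weights $\nu$ that occur are themselves dominant once $p$ avoids the bad primes—the prime bounds are engineered so that $p^r\langle\nu,\be^\vee\rangle$ cannot be cancelled by the bounded correction term. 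Kempf vanishing (\cite[II.4.5]{Jan1}) then gives $R^i\ind_B^G(\nu)=0$ for $i>0$ on every weight summand, while for the summands $\ul^*\otimes\nu$ (with $\nu$ dominant) I would invoke the Frobenius splitting of the cotangent bundle of $G/B$ from \cite{KLT}, which for good $p$ yields $R^i\ind_B^G(\ul^*\otimes\nu)=0$ for $i>0$ and a good filtration on $\ind_B^G(\ul^*\otimes\nu)$. Hence $E_2^{i,j}=0$ whenever $i>0$ and $j\le 2$, all the relevant differentials are forced to vanish, and $\opH^3(G_r,H^0(\la))^{(-r)}\cong E_\infty^{0,3}=E_2^{0,3}=\ind_B^G(\opH^3(B_r,\la)^{(-r)})$.

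For part (b), Assumption~\ref{A:char} implies $p$ is good, so (a) applies and it suffices to show that $\ind_B^G(\opH^3(B_r,\la)^{(-r)})$ has a good filtration. By Theorem~\ref{T:Brfinal}, together with the dominance argument above now carried out in degree three, $\opH^3(B_r,\la)^{(-r)}$ is a direct sum (possibly with multiplicity two) of dominant weight modules $\nu$ and modules $\ul^*\otimes\nu$ with $\nu$ dominant. For the weight summands, $\ind_B^G(\nu)=H^0(\nu)$ is a dual Weyl module, which has a good filtration (and is $0$ when $\nu$ is non-dominant); for the summands $\ul^*\otimes\nu$, the module $\ind_B^G(\ul^*\otimes\nu)$ has a good filtration by \cite{KLT}. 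Since a direct sum of modules admitting good filtrations again admits one, the claim follows.

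The main obstacle is the behavior of $\ind_B^G(\ul^*\otimes\nu)$: unlike the one-dimensional summands, its higher derived functors cannot be killed by Kempf vanishing alone and genuinely require the geometry of $G/B$, namely the Frobenius splitting of $T^*(G/B)$ in \cite{KLT}, which is precisely the origin of the ``good prime'' hypothesis in (a). The secondary but indispensable technical point is the uniform combinatorial verification that every weight $\nu$ appearing in $\opH^{\le 3}(B_r,\la)^{(-r)}$ is dominant for dominant $\la$; this is what makes the prime conditions in Assumption~\ref{A:char} natural and essentially sharp, and it must be checked case by case against the summands listed in (\ref{E:H0})–(\ref{E:H2}) and Theorem~\ref{T:Brfinal}.
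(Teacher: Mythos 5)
Your proposal follows essentially the same route as the paper: the same induction spectral sequence, vanishing of $E_2^{i,j}$ for $i>0$, $j\le 2$ via \cite[II.5.4]{Jan1} on the one-dimensional factors and \cite{KLT} on the $\ul^*\otimes\nu$ factors, and the good filtration in (b) obtained from \cite{KLT} applied to the factors of $\opH^3(B_r,\la)^{(-r)}$ read off from Theorem~\ref{T:Brfinal}. The only adjustment needed is that the one-dimensional weights $\nu$ that occur need not be dominant (for instance $\langle\nu,\al^\vee\rangle=-1$ can occur for factors coming from $\opH^2(B_r,\la)$ with $\la$ dominant), but they always satisfy $\langle\nu,\al^\vee\rangle\ge -1$ for all simple $\al$, which is exactly what \cite[II.5.4]{Jan1} requires for the vanishing of $R^i\ind_B^G$ and which forces $\ind_B^G\nu=0$ in the non-dominant cases, so your argument goes through unchanged.
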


\begin{proof} Consider the spectral sequence
(cf. \cite[II.12.2]{Jan1})
$$
E_2^{i,j} = R^i\ind_{B}^{G}\left(\opH^j(B_r,\la)^{(-r)}\right)
\Rightarrow
\opH^{i+j}(G_r,\ind_{B}^{G}\la)^{(-r)} = \opH^{i +
j}(G_r,H^0(\la))^{(-r)}.
$$
We would like to show that
\begin{align*}
E_2^{i,0} &= R^i\ind_{B}^{G}\left(\Hom_{B_r}(k,\la)^{(-r)}\right),\\
E_2^{i,1} &= R^i\ind_{B}^{G}\left(\opH^1(B_r,\la)^{(-r)}\right), \text{ and }\\
E_2^{i,2} &= R^i\ind_{B}^{G}\left(\opH^2(B_r,\la)^{(-r)}\right)
\end{align*}
vanish for all $i > 0$. This would imply that $E^{3}\cong E_{2}^{0,3}$.

In the proof of \cite[Thm. 6.1]{BNP2}, it was shown that $E_2^{i,0} = 0$
and $E_2^{i,1} = 0$ for all $\la \in X(T)_{+}$ and all primes $p$.
For $E_2^{i,2}$, we need to consider $\opH^2(B_r,\la)^{(-r)}$ which 
was computed in \cite[Thm. 5.3, 5.7]{BNP2} for $p \geq 3$ and
\cite{W} for $p=2$.  

A careful analysis of these results shows that, as a $B$-module, and for a dominant weight $\la$, $\opH^{2}(B_{r},\la)^{(-r)}$ always has a $B$-filtration with factors of the 
form $S$ where either
\begin{itemize} 
\item[(i)] $S$ is one-dimensional of weight $\mu$ with $\langle \mu,\alpha^{\vee}\rangle\geq -1$ for all $\al \in \Pi$, or 
\item[(ii)] $S = \mathfrak u^{*}\otimes \mu$ where $\mu$ satisfies (i).
\end{itemize} 
In all the cases $R^{i}\text{ind}_{B}^{G} S=0$ for $i>0$ using 
\cite[II.5.4]{Jan1} and \cite[Thm. 2]{KLT}  (where the good prime requirement is needed).  
Therefore, $E_{2}^{i,2}=0$ for $i>0$. This proves part (a).

For a dominant weight $\la,$ Theorem~\ref{T:Brfinal} shows that $\opH^3(B_r,\la)^{(-r)}$ also has a $B$-filtration whose factors satisfy the same conditions (i) or (ii). Part (b) now follows from \cite[Thm. 7]{KLT}. 
\end{proof}


\subsection{\bf An application to $G({\mathbb F}_{q})$-cohomology:} 

Let $G({\mathbb F}_{q})$ be the finite Chevalley group obtained from $G$ by taking the ${\mathbb F}_{q}$-rational points, and $kG({\mathbb F}_{q})$ be its group algebra. 
With Theorem~\ref{T:Brfinal} we can extend the results given in \cite[Thm. 4.3.2]{BBDNPPW}. The latter theorem
required $p > h$, a condition which was needed to guarantee that $\dim\opH^3(B,\la) \leq 2$ for any weight $\la$.
From Theorem~\ref{T:Brfinal} this dimension condition holds under Assumption~\ref{A:char}.  

\begin{thm}\label{theorem:H3boundusingG}
Suppose $p$ satisfies Assumption~\ref{A:char}. Then there exists a constant $D(\Phi)$, depending on $\Phi$, such that if $r \geq D(\Phi)$ and if $q = p^r$, then, for each finite-dimensional $kG({\mathbb F}_{q})$-module $V$, one has
\[
\dim \opH^3(G({\mathbb F}_{q}),V) \leq 2 \cdot \dim V.
\]
\end{thm}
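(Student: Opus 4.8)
The plan is to obtain the estimate as a direct application of the general bounding machinery of \cite[Thm. 4.3.2]{BBDNPPW}, with the single new ingredient being the $B$-cohomology dimension bound now furnished by Theorem~\ref{T:Bfinal}. The first step is to record that, under Assumption~\ref{A:char}, one has $\dim \opH^3(B,\mu) \leq 2$ for every $\mu \in X(T)$. This is read off directly from the closed-form list in Theorem~\ref{T:Bfinal}: each weight contributes a group of dimension $0$, $1$, or $2$, the value $2$ occurring only in the families $\mu = -p^m\be - p^l\al$ (with $m > l \geq 1$) and $\mu = -p^l(\al+\be)$ (with $\al \neq \be$ and $\al+\be \notin \Phi^+$). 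This is precisely the hypothesis that \cite[Thm. 4.3.2]{BBDNPPW} had extracted from the condition $p > h$; there the role of $p > h$ was solely to secure $\dim \opH^3(B,\mu) \leq 2$, a fact now available for the smaller primes allowed by Assumption~\ref{A:char}.

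With that bound in hand, I would run the reduction underlying the finite-group estimate. First, for a short exact sequence $0 \to A \to B \to C \to 0$ of $kG(\mathbb{F}_q)$-modules the long exact sequence yields $\dim \opH^3(G(\mathbb{F}_q),B) \leq \dim \opH^3(G(\mathbb{F}_q),A) + \dim \opH^3(G(\mathbb{F}_q),C)$, so the proposed estimate is subadditive over composition series; it therefore suffices to treat simple modules $V = L(\la)$ with $\la \in X_r(T)$. For such modules, and for $r$ at least a constant $D(\Phi)$, one invokes the stabilization of finite-group cohomology to rational (generic) cohomology together with the $\ind_B^G$-description of $G_r$-cohomology established in the preceding theorem; this converts $\opH^3(G(\mathbb{F}_q),L(\la))$ into data governed by the rational groups $\opH^3(B,-)$, each bounded by $2$. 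Summing these contributions, whose total multiplicity is controlled by $\dim L(\la)$, gives $\dim \opH^3(G(\mathbb{F}_q),L(\la)) \leq 2\dim L(\la)$, and subadditivity then returns the claim for arbitrary $V$.

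The main obstacle is bookkeeping rather than new mathematics: one must verify that every ingredient of the \cite{BBDNPPW} argument \emph{other} than the $\opH^3(B,-) \leq 2$ input remains valid under Assumption~\ref{A:char}, and in particular that the stabilization/comparison results and the choice of $D(\Phi)$ do not covertly re-impose $p > h$. Since those ingredients---subadditivity, the generic cohomology comparison of Cline--Parshall--Scott--van der Kallen, and the $\ind_B^G$ isomorphism for $G_r$-cohomology (which holds for good primes by the preceding theorem)---depend only on mild hypotheses satisfied here, the extension goes through once the $B$-cohomology dimension bound is supplied. The only genuine content is thus the verification, via Theorem~\ref{T:Bfinal}, that $2$ is the correct universal bound on $\dim \opH^3(B,\mu)$.
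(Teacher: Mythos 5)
Your proposal is correct and matches the paper's own justification, which is exactly the observation that the hypothesis $p>h$ in \cite[Thm. 4.3.2]{BBDNPPW} was needed only to secure $\dim\opH^3(B,\la)\leq 2$, a bound now supplied for all $\la\in X(T)$ by Theorem~\ref{T:Bfinal} under Assumption~\ref{A:char}. The paper does not even spell out the internal reduction (subadditivity, passage to simple modules, generic cohomology) that you sketch; it simply cites \cite{BBDNPPW} with the new $B$-cohomology input, so your argument is the same one with slightly more of the black box opened.
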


\begin{section}{Appendix}

\subsection{} For each $w \in W$, there exists a unique weight $\ga_w$ such that $w\cdot 0 + p\ga_w$ lies in the restricted region $X_1(T)$.   The lemma below provides an identification of all such weights $\ga_w$ when $\ell(w) = 3$ for $p \geq 3$. 

We first introduce some notation.  For two simple roots $\al$, $\be$, we write
$\al \sim \be$ for adjacent roots and $\al \nsim \be$ for non-ajacent roots.
Given simple roots $\al$ and $\be$, if there exists a third simple root
$\ga$ with $\al \sim \ga$ and $\ga \sim \be$ (i.e., 
we have a subgraph of the Dynkin diagram of the form $\al \leftrightarrow \ga \leftrightarrow \be$
up to a flip), we write $\al \approx \be$.  Furthermore, we write $\omega_{\al,\be}$ for $\omega_{\ga}$.

\begin{lem}\label{L:gammaw} Let $p \geq 3$. For $w = s_{\alpha_i} s_{\alpha_j}s_{\al_k} \in W$ with $\ell(w) = 3$, 
we define $\ga_w$ as follows.  Then $w \cdot 0 + p\ga_w \in X_1(T).$

\begin{itemize} 
\item[(I)] Suppose $k = i$ and $\al_i \sim \al_j$.  Then
$$ \ga_w = \omega_i + \omega_j$$
except in the following cases, where we define
$$
\ga_w = 
\begin{cases}
\omega_{n-2} + \omega_{n-1} - \omega_{n} &\text{ if $p = 3$, $\Phi$ is of type $B_n$, and } 
	w = s_{\al_{n-2}}s_{\al_{n-1}}s_{\al_{n-2}},\\
\omega_{n-1} &\text{ if $p \geq 5$, $\Phi$ is of type $B_n$, and }
	w = s_{\al_{n-1}}s_{\al_n}s_{\al_{n-1}},\\
\omega_{n-1} - \omega_{n-2} &\text{ if $p = 3$, $\Phi$ is of type $B_n$, and }
	w = s_{\al_{n-1}}s_{\al_n}s_{\al_{n-1}},\\
\omega_n &\text{ if $p \geq 5$, $\Phi$ is of type $B_n$, and }
	w = s_{\al_n}s_{\al_{n-1}}s_{\al_n},\\
2\omega_n &\text{ if $p = 3$, $\Phi$ is of type $B_n$, and }
	w = s_{\al_n}s_{\al_{n-1}}s_{\al_n},\\
\omega_{n-1} &\text{ if $p \geq 5$, $\Phi$ is of type $C_n$, and }
	w = s_{\al_{n-1}}s_{\al_n}s_{\al_{n-1}},\\
2\omega_{n-1} - \omega_{n-2} &\text{ if $p = 3$, $\Phi$ is of type $C_n$, and }
	w = s_{\al_{n-1}}s_{\al_n}s_{\al_{n-1}},\\
\omega_n &\text{ if $p \geq 5$, $\Phi$ is of type $C_n$, and }
	w = s_{\al_n}s_{\al_{n-1}}s_{\al_n},\\
\omega_n - \omega_{n-2} &\text{ if $p = 3$, $\Phi$ is of type $C_n$, and }
	w = s_{\al_n}s_{\al_{n-1}}s_{\al_n},\\
\omega_1 + \omega_2 - \omega_3 &\text{ if $p = 3$, $\Phi$ is of type $F_4$, and }
	w = s_{\al_1}s_{\al_2}s_{\al_1},\\
\omega_2  &\text{ if $p \geq 5$, $\Phi$ is of type $F_4$, and }
	w = s_{\al_2}s_{\al_3}s_{\al_2},\\
\omega_2 - \omega_1 - \omega_4 &\text{ if $p = 3$, $\Phi$ is of type $F_4$, and }
	w = s_{\al_2}s_{\al_3}s_{\al_2},\\
\omega_3 + \omega_2 - \omega_4 &\text{ if $p = 3$, $\Phi$ is of type $F_4$, and }
	w = s_{\al_3}s_{\al_2}s_{\al_3},\\	
\omega_1 &\text{ if $p \geq 7$, $\Phi$ is of type $G_2$, and }
	w = s_{\al_1}s_{\al_2}s_{\al_1},\\
2\omega_1 &\text{ if $p = 3, 5$, $\Phi$ is of type $G_2$, and }
	w = s_{\al_1}s_{\al_2}s_{\al_1},\\
\omega_2 &\text{ if $p \geq 5$, $\Phi$ is of type $G_2$, and }
	w = s_{\al_2}s_{\al_1}s_{\al_2},\\
2\omega_2 - \omega_1 &\text{ if $p = 3$, $\Phi$ is of type $G_2$, and }
	w = s_{\al_2}s_{\al_1}s_{\al_2}.
\end{cases}
$$

\medskip\noindent
Suppose from now on that $\al_i$, $\al_j$, and $\al_k$ are distinct.

\medskip
\item[(II)] Suppose none of the simple roots $\al_i$, $\al_j$, and $\al_k$ are adjacent to each other.
Then
$$\ga_w = \omega_i + \omega_j + \omega_k
$$
except in the following $p=3$ cases, where we define
$$\ga_w = 
\begin{cases}
\omega_{n}  + \omega_{n-2} + \omega_{k} + \omega_{n-1} & \text{ if $\Phi$ is of type $C_n$, and } 
	w= s_{\alpha_{n}} s_{\alpha_ {n-2}}s_{\al_{k}}\\
	&\quad \text{ with } k \leq n - 4,\\
\omega_{n} + \omega_{n-1} + \omega_{n-3} - \omega_{n-2}  & \text{ if $\Phi$  of type $D_n$, and } 
	w= s_{\al_{n}}s_{\alpha_{n-1}} s_{\alpha_ {n-3}},\\
\omega_2 + \omega_3 + \omega_5 - \omega_4  &\text{ if $\Phi$ is of type $E_n$ and } 
	w= s_{\alpha_{2}} s_{\alpha_ {3}}s_{\al_5}.
\end{cases}
$$

\medskip
\item[(III)] Suppose that precisely one pair of simple roots from $\al_i$, $\al_j$, and $\al_k$ are 
adjacent.  By rewriting $w$ if necessary, we may assume that $\al_i \sim \al_j$, $\al_i \nsim \al_k$,
and $\al_j \nsim \al_k$.  Then
$$\ga_w = \omega_i + \omega_k$$
unless $p = 3$ and $\al_i \approx \al_k$, in which case
$$\ga_w = \omega_i + \omega_k - \omega_{i,k}$$
except in the following cases, where we define
$$
\ga_w = 
\begin{cases}
2\omega_n + \omega_k &\text{ if $p = 3$, $\Phi$ is of type $B_n$, and }
	w = s_{\al_n}s_{\al_{n-1}}s_{\al_k}\\
	&\quad \text{ with } k \leq n - 3,\\
2\omega_{n-1} + \omega_k - \omega_{n-2} &\text{ if $p = 3$, $\Phi$ is of type $C_n$, and }
	w = s_{\al_{n-1}}s_{\al_n}s_{\al_{k}} \\
	&\quad \text{ with } k \leq n - 3,\\
\omega_1 + \omega_4 - \omega_3 &\text{ if $p = 3$, $\Phi$ is of type $F_4$, and }
	w = s_{\al_1}s_{\al_2}s_{\al_4},\\
\omega_2 + \omega_4 - \omega_3 &\text{ if $p = 5$, $\Phi$ is of type $F_4$, and }
	w = s_{\al_2}s_{\al_1}s_{\al_4}.
\end{cases}
$$

\medskip
\item[(IV)] Suppose $\al_i \sim \al_j$ and $\al_j \sim \al_k$ (i.e., there is a subgraph 
of the form $\al_i \leftrightarrow \al_j \leftrightarrow \al_k$).
Then
$$\ga_w = 
\begin{cases}
\omega_i &\text{ if } p \geq 5,\\
2\omega_i - \sum_{\al_{\ell} \sim \al_i, \ell \neq j} \omega_{\ell} &\text{ if } p = 3,
\end{cases}
$$
except in the following cases, where we define
$$
\ga_w = 
\begin{cases}
\omega_{n-1} - \omega_n &\text{ if } p \geq 5, \Phi \text{ is of type } B_n, \text{ and }
	w = s_{\al_{n-1}}s_{\al_{n-2}}s_{\al_{n-3}},\\
2\omega_{n-1} - 2\omega_n &\text{ if } p = 3, \Phi \text{ is of type } B_n, \text{ and }
	w = s_{\al_{n-1}}s_{\al_{n-2}}s_{\al_{n-3}},\\
2\omega_{n} &\text{ if } p \geq 5, \Phi \text{ is of type } B_n, \text{ and }
	w = s_{\al_{n}}s_{\al_{n-1}}s_{\al_{n-2}},\\
2\omega_{n} - \omega_{n-1} &\text{ if } p = 3, \Phi \text{ is of type } C_n, \text{ and }
	w = s_{\al_{n}}s_{\al_{n-1}}s_{\al_{n-2}},\\
2\omega_2 - \omega_1 - \omega_3 &\text{ if } p = 3, \Phi \text{ is of type } F_4, \text{ and }
	w = s_{\al_2}s_{\al_3}s_{\al_4}.
\end{cases}
$$

\medskip
\item[(V)] Suppose $\al_i \sim \al_j$ and $\al_i \sim \al_k$ (i.e., there is a subgraph 
of the form $\al_j \leftrightarrow \al_i \leftrightarrow \al_k$).
Then
$$\ga_w = 
\begin{cases}
\omega_i &\text{ if } p \geq 5,\\
2\omega_i - \sum_{\al_{\ell} \sim \al_i, \ell \neq j,k} \omega_{\ell} &\text{ if } p = 3,
\end{cases}
$$
except in the following cases, where we define
$$
\ga_w = 
\begin{cases}
\omega_{n-1} - \omega_n &\text{ if } p = 5, \Phi \text{ is of type } B_n, \text{ and }
	w = s_{\al_{n-1}}s_{\al_{n-2}}s_{\al_n},\\
2\omega_{n-1} - 2\omega_n &\text{ if } p = 3, \Phi \text{ is of type } B_n, \text{ and }
	w = s_{\al_{n-1}}s_{\al_{n-2}}s_{\al_n},\\
2\omega_{n-1} &\text{ if } p = 5, \Phi \text{ is of type } C_n, \text{ and }
	w = s_{\al_{n-1}}s_{\al_{n-2}}s_{\al_n},\\
2\omega_{n-1} - \omega_n &\text{ if } p = 3, \Phi \text{ is of type } C_n, \text{ and }
	w = s_{\al_{n-1}}s_{\al_{n-2}}s_{\al_n},\\
\omega_{2} - \omega_3 &\text{ if } p = 5, \Phi \text{ is of type } F_4, \text{ and }
	w = s_{\al_{2}}s_{\al_{1}}s_{\al_3},\\
2\omega_{2} - 2\omega_3 &\text{ if } p = 3, \Phi \text{ is of type } F_4, \text{ and }
	w = s_{\al_{2}}s_{\al_{1}}s_{\al_3},\\
2\omega_3 &\text{ if } p = 5, \Phi \text{ is of type } F_4, \text{ and }
	w = s_{\al_{3}}s_{\al_{4}}s_{\al_2},\\
2\omega_3 - \omega_2 &\text{ if } p = 3, \Phi \text{ is of type } F_4, \text{ and }
	w = s_{\al_{3}}s_{\al_{4}}s_{\al_2}.
\end{cases}
$$

\medskip
\item[(VI)] Suppose $\al_i \sim \al_k$ and $\al_j \sim \al_k$ (i.e., there is a subgraph 
of the form $\al_i \leftrightarrow \al_k \leftrightarrow \al_j$).
Then
$$\ga_w = \omega_i + \omega_j
$$
except in the following cases when $p = 3$, where we define
$$
\ga_w = 
\begin{cases}
\omega_{n-1} + \omega_{n-3} - \omega_{n} &\text{ if } \Phi \text{ is of type } B_n \text{ and }
	w = s_{\al_{n-1}}s_{\al_{n-3}}s_{\al_{n-2}},\\
2\omega_{n} + \omega_{n-2} - \omega_{n-1} &\text{ if } \Phi \text{ is of type } B_n \text{ and }
	w = s_{\al_{n}}s_{\al_{n-2}}s_{\al_{n-1}},\\
\omega_{n} + \omega_{n-2} - \omega_{n-1} &\text{ if } \Phi \text{ is of type } C_n \text{ and }
	w = s_{\al_{n}}s_{\al_{n-2}}s_{\al_{n-1}},\\
\omega_{1} + \omega_{3} - \omega_{2} - \omega_{4} &\text{ if } \Phi \text{ is of type } F_4 \text{ and }
	w = s_{\al_{1}}s_{\al_{3}}s_{\al_{2}},\\
\omega_{2} + \omega_{4} - \omega_{3} &\text{ if } \Phi \text{ is of type } F_4 \text{ and }
	w = s_{\al_{2}}s_{\al_{4}}s_{\al_{3}}.\\
\end{cases}
$$
\end{itemize} 
\end{lem}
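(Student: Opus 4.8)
The plan is to reduce the entire statement to a finite family of inner-product computations and then organize these according to the adjacency pattern of the triple $\{\al_i,\al_j,\al_k\}$ in the Dynkin diagram, exactly as reflected in the case division (I)--(VI). The defining property of $\ga_w$ is that $w\cdot 0 + p\ga_w \in X_1(T)$, i.e. $0 \leq \langle w\cdot 0 + p\ga_w, \al_\ell^\vee\rangle < p$ for every simple root $\al_\ell$. Since $\langle \omega_m, \al_\ell^\vee\rangle = \delta_{m\ell}$, writing $\ga_w = \sum_\ell m_\ell \omega_\ell$ decouples this condition coordinate-by-coordinate: $m_\ell$ is the unique integer satisfying $0 \leq \langle w\cdot 0, \al_\ell^\vee\rangle + p\, m_\ell < p$. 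Thus, setting $c_\ell := \langle w\cdot 0, \al_\ell^\vee\rangle$, one has $m_\ell = \lceil -c_\ell/p\rceil$. I would record this reduction first, since it simultaneously establishes the existence and uniqueness of $\ga_w$ (so that the lemma is well-posed) and shows that every exceptional entry in the tables is \emph{forced} precisely when some $c_\ell$ crosses a multiple of $p$.

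Next I would compute $w\cdot 0$ explicitly via Proposition~\ref{P:wdotzero}, which presents $-w\cdot 0$ as a sum of three positive roots determined by the reduced word $w = s_{\be_1}s_{\be_2}s_{\be_3}$. Writing $-w\cdot 0 = \sum_i m_i\al_i$ as a sum of simple roots and pairing against each coroot $\al_\ell^\vee$ expresses $c_\ell$ in terms of Cartan integers. For the simply-laced types $A_n$, $D_n$, $E_n$, each contribution $\langle\si_t,\al_\ell^\vee\rangle$ of a positive root lies in $\{-1,0,1,2\}$, so $c_\ell \in [-4,3]$; hence for $p \geq 5$ one has $m_\ell \in \{0,1\}$ and the generic formulas $\omega_i+\omega_j$, $\omega_i+\omega_j+\omega_k$, $\omega_i$, etc. follow immediately, with a multiple of $p$ being reached only when $|c_\ell| = 3,4$ at $p = 3$. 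This isolates the short list of $p=3$ exceptions appearing in cases (II), (III), (V), and (VI).

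The bulk of the work, and the main obstacle, lies in the multiply-laced types $B_n$, $C_n$, $F_4$, $G_2$, where the asymmetry between short and long roots allows Cartan integers equal to $-2$ or $-3$. Consequently the $c_\ell$ attain larger magnitude and cross $p$ already at $p=5$ and, for $G_2$, at $p=7$, which is exactly the source of the threshold conditions ($p\geq 5$ versus $p=3$; $p\geq 7$ versus $p=3,5$) recorded in the tables. I would control these by the stratification built into the statement: in case (I) the element $w = s_{\al_i}s_{\al_j}s_{\al_i}$ lies in the rank-two subsystem spanned by $\al_i,\al_j$ (of type $A_2$, $B_2=C_2$, or $G_2$), while in the remaining cases the support of $w$ lies in a parabolic of rank at most three; every computation therefore takes place in a fixed small-rank root system. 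Within each such subsystem I would tabulate the $c_\ell$ directly, tracking which node is short and which is long---this bookkeeping is where the exceptional values with their minus signs (e.g. $\omega_i + \omega_k - \omega_{i,k}$ in case (III), or the entries involving $-\omega_{n-2}$, $-\omega_n$ in types $B_n$, $C_n$, $F_4$) are produced. Since each instance is a bounded computation in a root system of rank $\leq 3$, the outcomes can be cross-checked independently by the same MAGMA routines \cite{BC,BCP} used elsewhere in the paper; the genuine content of the lemma is the uniform repackaging of these computations into the adjacency-type format (I)--(VI).
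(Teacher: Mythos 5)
Your reduction is exactly right and is the (unstated) verification the paper relies on: the conditions defining $X_1(T)$ decouple over the fundamental weights, so $\ga_w = \sum_\ell \lceil -\langle w\cdot 0,\al_\ell^\vee\rangle/p\rceil\,\om_\ell$, and the tables follow by bounding and then tabulating the integers $\langle w\cdot 0,\al_\ell^\vee\rangle$ case by case; the paper gives no proof of the lemma beyond this implicit direct computation. The one point to make explicit when carrying it out is that $\ell$ must range not only over the (rank $\leq 3$) support of $w$ but also over simple roots \emph{adjacent to} that support, since those outside nodes are exactly where $\langle w\cdot 0,\al_\ell^\vee\rangle$ becomes positive and produces the negative coefficients such as $-\om_n$ in type $B_n$ or $-\om_{i,k}$ in case (III).
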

\end{section}



\end{document}